\numberwithin{equation}{section}
\newcommand*{\alg}{\mathcal}
\theoremstyle{plain}
\newtheorem{lem}{Lemma}[section]
\newtheorem{prop}[lem]{Proposition}
\newtheorem{thm}[lem]{Theorem}
\theoremstyle{definition}
\newtheorem{df}[lem]{Definition}
\newtheorem{rem}[lem]{Remark}
\newtheorem{ex}[lem]{Example}
\newtheorem{cond}[lem]{Condition}
\newtheorem{obs}[lem]{Observation}
\newcommand*{\N}{\mathbb N}
\newcommand*{\Z}{\mathbb Z}
\newcommand*{\R}{\mathbb R}
\newcommand*{\C}{\mathbb C}
\newcommand*{\Q}{\mathbb Q}
\newcommand*{\one}{\mathbbm 1}
\newcommand{\id}{\ensuremath{\textnormal{id}}}		
\newcommand*{\ot}{\otimes}
\newcommand{\algA}{\ensuremath{\mathcal{A}}}		
\newcommand{\algB}{\ensuremath{\mathcal{B}}}		
\newcommand*{\otA}{{\ot_{A_r}}}
\newcommand*{\eps}{\varepsilon}
\newcommand*{\Cat}{\mathcal C}
\newcommand*{\CatD}{\mathcal D}
\newcommand*{\E}{\mathcal E}
\newcommand*{\Op}{\mathcal O}
\newcommand*{\h}{\mathfrak h}
\newcommand*{\bF}{{b}}		
\newcommand*{\Fc}{{F^+}}		
\newcommand*{\Fa}{{F^-}}		
\newcommand*{\Fca}{{F^\pm}}		
\newcommand*{\tFc}{{\tilde F^+}}		
\newcommand*{\tFa}{{\tilde F^-}}		
\newcommand*{\B}{\alg B}
\newcommand*{\mb}{\mathbf}
\newcommand{\Hom}{\textnormal{Hom}}
\newcommand{\cok}{\textnormal{cok}}
\newcommand*{\F}{\mathcal F}		
\newcommand*{\fcG}{\mathcal G}		
\newcommand*{\End}{\textnormal{End}}		
\newcommand*{\Mod}{\textnormal{-Mod}}		
\newcommand*{\BMod}[1][]{\textnormal{-Mod}_{#1}\textnormal{-}}		
\newcommand*{\Vect}{\mathbf{Vect}}	
\newcommand*{\lsl}{\mathfrak{sl}}
\newcommand*{\ev}{\textnormal{ev}}		
\newcommand*{\Gr}{K_0}		
\newcommand*{\DZ}{\mathcal Z}		
\newcommand*{\YD}[1]{\textnormal{-}\mathcal{YD}_{#1}\textnormal{-}}		
\newcommand*{\Rep}{\textnormal{Rep}}		
\newcommand*{\Repfd}{\textnormal{Rep}^{\textnormal{fd}}}		
\newcommand*{\Repss}{\textnormal{Rep}_{\textnormal{ss}}}		
\newcommand*{\RepAss}[1]{\textnormal{Rep}_{#1\textnormal{-ss}}}		
\newcommand*{\simp}[1]{\C_{#1}}		
\newcommand*{\tpmorph}{T}		
\newcommand*{\talg}{{\mathcal T}}	
\newcommand*{\aind}{\alpha}		
\newcommand*{\fxh}{{h}}		
\newcommand*{\fcAlgPtUnc}{{\mathcal I}}		
\newcommand*{\algBim}{{\mathcal B}}		
\newcommand*{\ptfE}{E}		
\newcommand*{\algPtBim}{{\mathcal B_{\ptfE}}}	
\newcommand*{\fcAB}{{\mathcal I}}		
\newcommand*{\wzconst}{t}		
\newcommand*{\Sym}{\textnormal{Sym}}
\begin{document}

\thispagestyle{empty}

\mbox{}
\vskip 3em
\begin{center}\LARGE
Integrable perturbations of conformal field theories \\
and Yetter-Drinfeld modules
\end{center}

\vskip 2em
\begin{center}
{\large 
David B\"ucher ~~ and ~~
Ingo Runkel
\\[1em]
{\small\sf
davidbuecher@gmail.com ~~,~~
ingo.runkel@uni-hamburg.de}
\\[1em]
Fachbereich Mathematik, Universit\"at Hamburg\\
Bundesstra\ss e 55, 20146 Hamburg, Germany
}
\end{center}

\vskip 3em

\begin{abstract}
In this paper we relate a problem in representation theory -- the study of Yetter-Drinfeld modules over certain braided Hopf algebras
-- to a problem in two-dimensional quantum field theory, namely the identification of integrable perturbations of a conformal field theory. A prescription that parallels Lusztig's construction
allows one to read off the quantum group governing the integrable 
symmetry. As an example, we illustrate how the quantum group for the loop algebra of $\lsl(2)$ appears in the integrable structure of the perturbed uncompactified and compactified free boson.
\end{abstract}

\newpage

\tableofcontents

\newpage

\section{Introduction and Summary}

\medskip

There are three main methods to investigate integrable field theories: factorised scattering, lattice discretisations, and perturbations around a conformal field theory (CFT). We will be concerned with the latter, building on the approach of \cite{BLZ96,BLZ97a,BLZ97b,BLZ99} and its formulation and generalisation in terms of perturbed defects in \cite{Ru07,MR09,Ru10}.

The key ingredient in our construction are one-dimensional objects in the two-dimen\-sional CFT called {\em defect lines}. At such a defect line, the fields of the CFT may have singularities or discontinuities. For the particular type of defect we are interested in, the so-called {\em topological defects}, the fields may be discontinuous but not singular and the stress tensor remains continuous across the defect line. Consider the CFT on a cylinder of circumference $L$ and a circular topological defect labelled $X$ winding around the cylinder. This produces an operator $\mathcal{O}(X)$ on the space of states $\mathcal{H}$ of the CFT on a circle, called the {\em defect operator}. Since the defect $X$ is topological, the operator $\mathcal{O}(X)$ commutes with the conformal Hamiltonian $H_0$,
\begin{align}
  \big[ \, H_0 \,,\, \mathcal{O}(X) \, \big] = 0
  \qquad , \quad H_0 = \tfrac{2 \pi}{L}\big(L_0 + \overline L_0 - \tfrac{c+\overline c}{24}\big) \ .
\end{align}

We want to find a systematic way to simultaneously perturb the conformal Hamiltonian and the topological defect such that their commutator still vanishes.
Fix a bulk field $\Phi$ in the space of bulk fields $\mathcal{F}$. The perturbed Hamiltonian is 
\begin{align}
  H_\text{pert}(\Phi) ~=~ H_0 + \int_0^L \!\! \Phi(x) \, dx \ .
\end{align}
Next consider a field $\psi$ which lives on the defect line $X$, i.e.\ is an element in the space of defect fields $\mathcal{F}_{XX}$ on the defect $X$ -- this is discussed in more detail in Section \ref{sec:cyl+def-field}. Inserting the perturbing term $\exp( \int_0^L \psi(x) dx )$ on the defect line $X$ results in the perturbed defect operator $\mathcal{O}(X,\psi)$. 
This operator is defined by formally expanding the exponential into a sum of ordered integrals of defect fields, see Section \ref{ssec:pert-def-and-cat}. 

We want to find pairs $\Phi$, $\psi$ such that $[H_\text{pert}(\Phi),\mathcal{O}(X,\psi)]=0$. 
This problem has two parts. The first part is analytic and concerns the definition of the perturbed theory, convergence of the individual integrals in the series defining $\mathcal{O}(X,\psi)$, convergence of the series itself, the domain and codomain of the resulting operator, etc. These points will not be addressed here; in the special case of the free boson, we will return to these questions in 
\cite{InProgr}. 
In the present paper, we will concentrate on the second and more algebraic part, which we outline next.

\medskip

The starting point are two categories,
\begin{itemize}
\item[$\Cat$ -] the category of representations of the chiral algebra, which we take to be braided monoidal, as is the case for example in rational CFT and for the free boson,
\item[$\CatD$ -] the monoidal category formed by topological defects, or, more precisely, by defects which are transparent to all fields from the holomorphic and anti-holomorphic copy of the chiral algebra.
\end{itemize}
The space of bulk fields $\mathcal{F}$ and the spaces of defect fields $\mathcal{F}_{XX}$, $X \in \CatD$, carry an action of the holomorphic and anti-holomorphic copy of the chiral algebra and can thus be considered as objects in $\Cat \boxtimes \Cat^\mathrm{rev}$, where ``$\boxtimes$'' is the Deligne-product and ``rev'' indicates that the braiding in the second copy of $\Cat$ is inverted. 

The perturbing fields we consider are described as follows.
Choose two representations of the chiral algebra, $F^\pm_\Cat \in \Cat$, and fix a vector in each, $\psi^\pm \in F^\pm_\Cat$. The bulk field $\Phi$ is defined in terms of an intertwiner $\hat b : F^+_\Cat \otimes_{\mathbb{C}} F^-_\Cat \to \mathcal{F}$ to be
\begin{align} \label{eq:intro-phi-def}
  \Phi = \hat b(\psi^+ \otimes \psi^-) \ .
\end{align}
The defect fields, on the other hand, are parametrised by intertwiners $\hat m : F^+_\Cat \otimes_{\mathbb{C}} \one \, \oplus \, \one \otimes_{\mathbb{C}} F^-_\Cat \to \mathcal{F}_{XX}$, 
where $\one$ is the vacuum representation. We set
\begin{align} \label{eq:intro-psiX-def}
  \psi = \hat m(\psi^+  \otimes \Omega + \Omega \otimes \psi^-) \ ,
\end{align}
where $\Omega \in \one$ is the vacuum vector. One could say that ``the bulk field is split into its holomorphic and anti-holomorphic part, and these two parts serve as perturbing defect fields''.

As explained in Section \ref{ssec:pert-def-and-cat}, for rational CFTs there is a braided monoidal functor $\alpha$ from $\Cat \boxtimes \Cat^\mathrm{rev}$ to the monoidal centre $\DZ(\CatD)$ of $\CatD$ \cite{FFRS06}. Writing $U$ for the forgetful functor $\DZ(\CatD) \to \CatD$, the composition $U\alpha$ has the property that for all $R,S \in \Cat$, the space of intertwiners $R \otimes_{\mathbb{C}} S \to \mathcal{F}_{XX}$ is naturally isomorphic to the space of morphisms $U\alpha(R \otimes_{\mathbb{C}} S) \ot_\CatD X \to X$ 
in $\CatD$. For bulk fields one uses $X=\one$, the tensor unit in $\CatD$  describing the trivial defect, since the space of bulk fields satisfies 	$\mathcal{F} = \mathcal{F}_{\one\one}$. 
Let us abbreviate
\begin{align}
  F^+ = \alpha(F^+_\Cat \otimes_{\mathbb{C}} \one) \quad , \quad
  F^- = \alpha(\one \otimes_{\mathbb{C}} F^-_\Cat) \ ,
\end{align}
both of which are objects in $\DZ(\CatD)$. 
Then a morphism 
\begin{align}\label{eq:pairing_b}
	b : U(F^+) \otimes_{\CatD} U(F^-) \to \one 
\end{align}
describes a bulk field via the above natural isomorphism and \eqref{eq:intro-phi-def}, and a morphism $m : U(F^+ \oplus F^-) \otimes_{\CatD} X \to X$ describes a defect field via \eqref{eq:intro-psiX-def}. Since $\Phi$ is determined by $b$, we can write $H_\text{pert}(b)$ instead of $H_\text{pert}(\Phi)$. Similarly, we will write $\mathcal{O}(X,m)$ instead of 	$\mathcal{O}(X,\psi)$. 

The first main result of this paper is the observation that $H_\text{pert}(b)$ and $\mathcal{O}(X,m)$ commute (assuming existence of the operators) if $b$ and $m$ satisfy a simple compatibility condition, which we call the {\em commutation condition}:
\begin{align}
  \raisebox{-0.5\height}{\includegraphics[height=.11\textwidth]{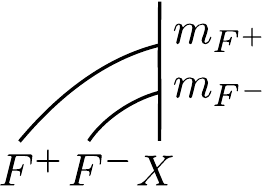}} 
  \quad - 
  \raisebox{-0.5\height}{\includegraphics[height=.11\textwidth]{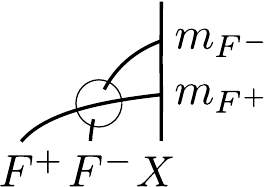}} 
  \quad = \ 
  \raisebox{-0.5\height}{\includegraphics[height=.11\textwidth]{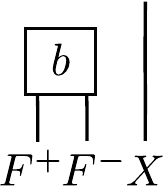}} 
  \quad - \raisebox{-0.5\height}{\includegraphics[height=.11\textwidth]{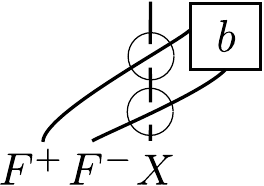}} \quad .
\end{align}
Here we use string diagram notation for morphisms in $\CatD$, the encircled crossings stand for the half-braiding of $F^\pm \in \DZ(\CatD)$, and $m_{F^\pm}$ denotes the restriction of $m : (F^+ \oplus F^-) \otimes_{\CatD} X \to X$ to $F^\pm \otimes_{\CatD} X$. The commutation condition is explained in detail in Sections \ref{ssec:PertDefNonlocalConsCharges} and \ref{sec:comm_cond_YD}. It was first found in \cite{Ru10} for minimal model CFTs; the above formulation generalises to a larger class of models which in particular includes unitary rational CFTs.

The desired systematic approach to finding joint perturbations of $H_0$ and $X$ preserving the vanishing commutator can thus be phrased as: find pairs $b : U(F^+) \otimes_{\CatD} U(F^-) \to \one$ and $(X,m)$ with $m : U(F^+ \oplus F^-) \otimes_{\CatD} X \to X$ 
satisfying the commutation condition. 
Of course, in order to have an integrable perturbation we need to have a whole family of such pairs $(X,m)$ for a fixed $b$, such that their defect operators mutually commute. To formulate this in terms of the category $\CatD$, we first endow the category of pairs $(X,m)$ with a tensor product (Section \ref{sec:tensor_algebras}) and call the resulting monoidal category $\CatD_{F^+ \oplus F^-}$.  
The  tensor product in $\CatD_{F^+ \oplus F^-}$ is constructed to be compatible with the composition of defect operators, 
\begin{align}
  \mathcal{O}(X,m) \, \mathcal{O}(Y,n) = \mathcal{O}\big( (X,m) \otimes (Y,n) \big) \ ,
\end{align}
(cf.\ Section \ref{ssec:tensor-prod-of-pert-def}) 
always assuming that the operators involved exist. We argue in Section \ref{ssec:op+K0} that the defect operator $\mathcal{O}(X,m)$ only depends on the class $[X,m]$ of $(X,m)$ in the Grothendieck ring $K_0(\CatD_{F^+ \oplus F^-})$. Thus we need to find commuting families in $K_0(\CatD_{F^+ \oplus F^-})$. Indeed, it may happen that $(X,m) \ot (Y,n) \ncong (Y,n) \ot (X,m)$ but still $[X,m] \,[Y,n] = [Y,n] \,[X,m]$. 

The category $\CatD_{F^+ \oplus F^-}$ was first constructed in the Cardy-case of rational CFTs and for $F^-=0$ in \cite{Ru07,MR09}; here we generalise it to a larger class of CFTs and include the simultaneous treatment of holomorphic and anti-holomorphic defect fields, which is necessary to treat perturbed CFTs.

\medskip

At this point we can forget about the category $\Cat$ and about the underlying CFT and start directly from $\CatD$, together with objects $F^\pm \in \DZ(\CatD)$ and a morphism $b : U(F^+) \otimes_{\CatD} U(F^-) \to \one$. The problem we need to solve is
\begin{quote}
Find families $\{ (X_u,m_u) \}_u$ in $\CatD_{F^+ \oplus F^-}$ which are as large as possible, such that $b$ and $m_u$ satisfy the commutation condition for all $u$, and $[X_u,m_u] \,[X_v,m_v] = [X_v,m_v] \,[X_u,m_u]$ for all $u,v$.
\end{quote}
This specific formulation of the problem is motivated directly from the CFT application. In the following, we will cast it into a more standard mathematical form. 

The reformulation relies on the existence of tensor algebras in $\CatD$. A sufficient condition for this to be the case is that countable direct sums, compatible with the tensor product, exist in $\CatD$, see Condition \ref{cond:councop}. The latter assumption is not directly met by the categories $\CatD$ one associates to rational CFTs (those are finitely semisimple, i.e.\ their objects are finite direct sums of simple objects). However, in this case we will implicitly replace $\CatD$ with a completed category containing such infinite sums. For the following discussion, we will thus assume that tensor algebras exist in $\CatD$.

We show in Section~\ref{sec:tensor_algebras} that $\CatD_{F^+ \oplus F^-}$ is nothing but the category of modules over $\mathcal{T}(F^+ \oplus F^-)$ in $\CatD$. Since $F^\pm \in \DZ(\CatD)$, the tensor algebra $\mathcal{T}(F^+ \oplus F^-)$ is a Hopf algebra and so the category of $\mathcal{T}(F^+ \oplus F^-)$ modules is monoidal and in fact monoidally isomorphic to $\CatD_{F^+ \oplus F^-}$ (Proposition \ref{prop:tensormod}). 

The map $b$ can be extended to a Hopf pairing $\rho(b) : \mathcal{T}(F^+) \otimes \mathcal{T}(F^-) \to \one$. We can now consider the Yetter-Drinfeld modules for $\mathcal{T}(F^+),\mathcal{T}(F^-),\rho(b)$ in $\CatD$. 
Since the standard formulation of Yetter-Drinfeld modules is done in braided categories \cite{Be95}, for these considerations we will restrict ourselves to the case that $\CatD$ is braided (see, however, Remark \ref{rem:D-not-braided}).
Yetter-Drinfeld modules are $\mathcal{T}(F^+)$-left modules and $\mathcal{T}(F^-)$-right modules such that the left and right actions satisfy a compatibility condition in terms of $\rho(b)$, see Section \ref{sec:comm_cond_YD}. The second main result of this paper is the insight that giving a pair $(X,m)$ satisfying the commutation condition with $b$ is equivalent to giving $X$ the structure of a Yetter-Drinfeld module for $\mathcal{T}(F^+),\mathcal{T}(F^-),\rho(b)$, see Theorem \ref{thm:YD-com-rel-mon}. This correspondence is compatible with the tensor product. We can therefore rephrase the above problem as follows

\begin{quote}
Find families $\{ M_u \}_u$ in the category $\talg(\Fc)\YD{\rho}\talg(\Fa)$ of Yetter-Drinfeld modules, which are as large as possible, such that for all $u,v$, $[M_u] \,[M_v] = [M_v] \,[M_u]$ holds in the Grothendieck ring of $\talg(\Fc)\YD{\rho}\talg(\Fa)$.
\end{quote}

The role of  the quantum double of the algebra of non-local conserved currents (whose modules are Yetter-Drinfeld modules of that algebra) in integrable quantum field theory was also emphasised in \cite{BL93}.

\medskip

We will look at the above question in two closely related examples: the uncompactified and the compactified free boson and their perturbations to sin(h)-Gordon theories (Sections \ref{sec:uncompboson} and \ref{sec:compboson}). This allows us to recover some results of \cite{BLZ96,BLZ97a,BLZ97b} in our formalism. 

For the category $\Cat$ we take the semisimple part of the category of representations of the Heisenberg vertex operator algebra (i.e.\ the $U(1)$-current algebra); this is braided-equivalent to the monoidal category of $\C$-graded vector spaces with a deformed braiding, see Section \ref{ssec:unc_input-data}. The representations $F^\pm_{\Cat}$ are chosen to be both given by 
\begin{align}
  F^+_{\Cat} = F^-_{\Cat} = \C_\omega \oplus \C_{-\omega} \ ,
\end{align}  
where $\omega \in \C^\times$ (the $U(1)$-charge of the perturbing field) and $\C_\omega$ denotes the one-dimensional vector space of grade $\omega$. The highest weight vectors in the corresponding representations of the Heisenberg VOA have conformal weight $h_\text{conf} = \frac12 \omega^2$.

The approach we take is to find a fully faithful exact monoidal functor from the representation category of an appropriate quantum group into the category $\talg(\Fc)\YD{\rho}\talg(\Fa)$ of Yetter-Drinfeld modules. This allows one to transfer results on the product in the Grothen\-dieck ring from quantum groups to $\talg(\Fc)\YD{\rho}\talg(\Fa)$. In fact, if, following Lusztig \cite{Lus94}, one divides the tensor algebras $\talg(\Fc)$ and $\talg(\Fa)$ by the kernel of the Hopf pairing induced by the morphism $b$ in \eqref{eq:pairing_b}, then Yetter-Drinfeld modules of the quotients are in one-to-one correspondence with representations of the relevant quantum group (cf.\ Remark \ref{rem:Lusztig-f}). 

For the uncompactified free boson, we will need the quantum group $\tilde U_\hbar(L\lsl_2)$, where $L\lsl_2$ denotes the loop algebra of $\lsl_2$. It has generators $h$, $e_i^\pm$, $i=0,1$, and the brackets 	$[e_i^+,e_j^-]$ involve $e^{\hbar h}$ for some constant $\hbar \in \C^\times$.
In Theorem \ref{thm:exrepcatfuncmon_uncpt1} we show that the category of $\tilde U_\hbar(L\lsl_2)$-modules on which $h$ acts semisimply fully embeds into $\talg(\Fc)\YD{\rho}\talg(\Fa)$, provided that we choose (the sign is a convention)
\begin{align}
  \hbar = - \tfrac12 i \pi \omega^2 \ .
\end{align}

In the compactified case we have an extra parameter, the compactification radius $r$ (which one may in fact choose complex). The charge $\omega$ of the perturbing field is constrained to satisfy
\begin{align}
  \omega = r^{-1} \, t \quad , \quad t \in \Z \ .
\end{align}
We will restrict ourselves to $t \in 2\Z$ as the treatment simplifies.
The relevant quantum group is now $U_q(L\lsl_2)$, which has generators $e_i^\pm$, $i=0,1$, and $k, k^{-1}$. 
The latter are related to the generator $h$ of $\tilde U_\hbar(L\lsl_2)$ via $k \mapsto e^{\hbar h}$. 
Analogously to the uncompactified case, the category of $U_q(L\lsl_2)$-representations on which $k$ acts semisimply fully embeds into $\talg(\Fc)\YD{\rho}\talg(\Fa)$, provided we choose (in our convention) $t=-2$ and
\begin{align}
  q = \exp\!\big({-}\tfrac12 i \pi \omega^2) \ ,
\end{align}
see Theorem \ref{thm:exrepcatfuncmon_cpt1}.
One important difference between $\tilde U_\hbar(L\lsl_2)$ and $U_q(L\lsl_2)$ is that the latter allows for so-called cyclic representations if $q$ is a root of unity. The corresponding non-local conserved charges behave similar to $Q$-operators, see Section \ref{ssec:comp-TQ_cycl}.

\medskip

This paper is organised as follows. In Section \ref{sec:tensor_algebras} we give the definition of the category $\CatD_{F^+ \oplus F^-}$ and relate it to representations of the tensor algebra. Section \ref{sec:PertDef2D_FT} contains the discussion of CFT with defects: the relation between $\CatD_{F^+ \oplus F^-}$ and defect perturbations is explained, and the commutation condition for compatible bulk and defect perturbations is derived. In Section \ref{sec:comm_cond_YD} the commutation condition is reformulated via Yetter-Drinfeld modules. The free boson example is treated in Sections \ref{sec:uncompboson} and \ref{sec:compboson}.

\subsection*{Acknowledgements}

We thank Anton Alekseev, Ed Corrigan, Alexei Davydov, 
Christian Kassel, Shan Majid, Dimitrios Manolopoulos, 
Carlo Meneghelli, Aliosha Semikhatov 
and J\"org Teschner for helpful suggestions and discussions. 
DB is funded by the SFB 676 ``Particles, Strings, and the Early Universe'' 
of the German Research Foundation (DFG).

\section{Tensor algebras and their modules}
\label{sec:tensor_algebras}

In the present section we recall the notion of a tensor algebra $\talg(F)$ for an object $F$ in a monoidal category $\CatD$ and of modules over $\talg(F)$. We also remind the reader that if $F$ is equipped with a lift to the monoidal centre $\DZ(\CatD)$ of $\CatD$, the tensor algebra $\talg(F)$ carries a Hopf algebra structure. In this case the category of $\talg(F)$-modules is itself monoidal. 
Notions from categorical algebra such as the monoidal centre, (Hopf) algebras in monoidal categories, their modules, etc., are briefly collected in Appendix \ref{appx:alg-etc-in-braided-cats}. 

Some conventions that we will use in the following are: The smallest natural number is $0$, i.e.\ $\N = \{0,1,2,\ldots\}$. 
Vector spaces, algebras, etc., will be over $\C$ (unless when speaking about algebras in more general monoidal categories) and $\Vect$ denotes the category of (not necessarily finite dimensional) complex vector spaces. 
For the graphical representation of morphisms in braided categories 
we use notation of e.g.\ \cite[Ch.\,2.3]{BaKi00}. In particular, our pictures are 
read from bottom to top. For example, a (generic) morphism $f : U \to V$ and the 
braiding $c_{U,V} : U \ot V \to V \ot U$ are represented by
\begin{align}
 \raisebox{-0.5\height}{\includegraphics[height=0.18\textwidth]{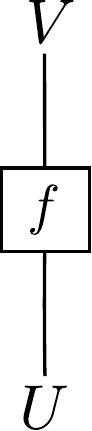}} \qquad \text{and} \qquad 
 \raisebox{-0.5\height}{\includegraphics[height=0.18\textwidth]{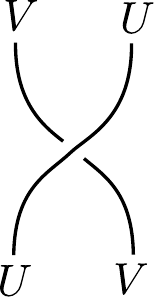}}. 
\end{align}
For an object $V$ of the monoidal centre the half-braiding 
$\varphi_{V,X}: V \ot X \to X \ot V$ will be represented as
\begin{align}
\raisebox{-0.5\height}{\includegraphics[height=0.18\textwidth]{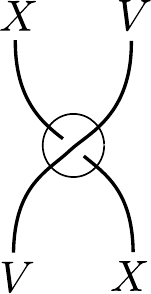}} .
\end{align}

To exhibit the relation to perturbed defect operators -- to be described in Section \ref{sec:PertDef2D_FT} -- it is helpful to reformulate the category of $\talg(F)$-modules and the tensor product on it in terms of the generator $F$ alone. This reformulation does not rely on the existence of the tensor algebra, which can be an advantage in applications. When working with tensor algebras, we will assume the following condition:

\begin{cond} \label{cond:councop}
 $\CatD$ is an abelian monoidal category containing countable direct sums (coproducts) 
 preserved by the tensor product in both variables.
\end{cond}

Fix an abelian monoidal category $(\CatD,\ot,\alpha,\one,\lambda,\rho)$. The {\em tensor algebra} of an object $F \in \CatD$ is an 
algebra $\talg(F)$ in $\CatD$ and a morphism $\iota: F \to \talg(F)$ such that the following universal property 
holds: For each algebra $A$ in $\CatD$ and each morphism 
$f|_F: F \to A$, there exists a unique morphism $f: \talg(F) \to A$ of algebras 
making the following diagram in $\CatD$ commute:
\begin{align} \xymatrix{ F \ar[r]^\iota \ar[dr]_{f|_F} & \talg(F) \ar@{-->}[d]^{\exists !\, f} \\ &A } \end{align}
Under Condition \ref{cond:councop} the tensor algebra $\talg(F)$ exists.
It is unique up to unique isomorphism and we will choose the realisation
\begin{align} \label{eq:talg}
\textstyle \talg(F) = \big( \bigoplus_{n \in \N} F^{\ot n},\, \mu,\, \eta\, \big) 
\end{align}
with unit $\eta := \iota_0$ and multiplication
$\mu := \sum_{n,m \in \N} \iota_{n+m} \circ \alpha_{n,m} \circ (\pi_n \ot \pi_m)$. 
Here,
\begin{align}
\textstyle
\iota_n: F^{\ot n} \to \bigoplus_{n \in \N} F^{\ot n}
\quad , \qquad
\pi_n: \bigoplus_{n \in \N} F^{\ot n} \to F^{\ot n}
\label{eq:iota-pi-tensoralg}
\end{align}
are the canonical embeddings and projections, respectively, and $\alpha_{n,m}$ is a composition of associators moving the brackets from $F^{\ot n} \otimes F^{\ot m}$ to $F^{\ot(n+m)}$. In the notation $F^{\ot n}$ it is understood that the brackets are placed as $((\ldots(F\ot F) \ot \ldots \ot F) \ot F) \ot F$, 
see \cite[Ch.\,VII.3, Thm.\,2]{McL98} for details.

\medskip

If $\Cat$ is a braided monoidal category with braiding $c$ and $(A,\mu_A,\eta_A)$ and $(B,\mu_B,\eta_B)$ are algebras in $\Cat$, 
	then we endow $A \ot B \in \Cat$ with the algebra structure\footnote{ \label{fn:noassoc}Here 
	we do not explicitly write out all associators, unit isomorphisms and 
	brackets between tensor products. This is done to make the expressions 
	easier to read.} 
	\begin{align} \label{eq:prod-on-tensor-of-alg}
		\mu_{A \ot B} = (\mu_A \ot \mu_B) \circ (\id_A \ot c_{B,A} \ot \id_B) ~~, \qquad 
		\eta_{A \ot B} = \eta_A \ot \eta_B \ . 
	\end{align}

Recall the notation $\DZ(\CatD)$ for the monoidal centre of $\CatD$ and suppose that $F \in \DZ(\CatD)$. Then (assuming Condition \ref{cond:councop}) also $\talg(F) \in \DZ(\CatD)$ and $\talg(F)$ carries a Hopf algebra structure. 
The coproduct 
$\Delta: \talg(F) \to \talg(F) \ot \talg(F)$, counit $\varepsilon: \talg(F) \to \one$ 
and antipode $S: \talg(F) \to \talg(F)^{op}$ are obtained from the morphisms 
$\Delta|_F := \iota_1 \ot \eta + \eta \ot \iota_1$, $\epsilon|_F := 0$ 
and $S|_F := -\id_F$ by making use of the universal property (see e.g.\ \cite[Sect.\,2]{Schau96}). The definition of the coproduct implicitly uses convention \eqref{eq:prod-on-tensor-of-alg} for the algebra structure on the tensor product of two algebras.

\medskip

In the remainder of this section, we will give a description of 
the category of $\talg(F)$-modules which involves just the generator $F$ and not the tensor algebra. We write an object $V \in \DZ(\CatD)$ as $V = (\tilde V, \varphi_{V,-})$, where $\tilde V$ is the underlying object in $\CatD$ and $\varphi_{V,-}$ is the half-braiding.

\begin{df} \label{df:catdf}
 Let $F = (\tilde F, \varphi_{F,-}) \in \DZ(\CatD)$. 
 The category $\CatD_F$ is given by
\begin{itemize}
 \item objects: pairs $(X,m)$ consisting of an object $X \in \CatD$ and 
	a morphism $m \in \Hom_{\CatD}(\tilde F \ot X,X)$.
 \item morphisms $(X,m) \to (Y,n)$: morphisms $f \in \Hom_{\CatD}(X,Y)$ 
	such that the diagram 
	\begin{align} \xymatrixcolsep{3pc}
		\xymatrix{ \tilde F \ot X  \ar[r]^-{\id_{\tilde F} \ot f} \ar[d]_{m} 
			&\tilde F \ot Y \ar[d]^{n} \\
			X 	\ar[r]^{f} 	&Y  } \label{eq:DF-morph-diag} \end{align}
	commutes. 
	Composition of morphisms and identity morphisms are those of $\CatD$.
\end{itemize}
\end{df}

We define a tensor product $\ot$ on $\CatD_F$ by $(X,m) \ot (Y,n) := (X \ot Y, \tpmorph(m,n))$, 
 where\footnote{ The 
graphical notation does not capture associators and unit isomorphisms. 
When translating a string diagram into a morphism of the braided category, 
by Mac Lane's coherence theorem there is only one way to add these natural 
isomorphisms. Alternatively, one can understand the diagrams as the process 
a) pass to an equivalent strict category \cite[Ch. XI.3, Thm 1]{McL98}; 
b) write out morphism for string diagram; 
c) transport back to original category. \label{fn:graphics}}
 \begin{align} 
  	\tpmorph(m,n) ~&=~ \left(m \ot \id_Y + (\id_X \ot n) \circ \alpha_{X,\tilde F,Y} 
		\circ (\varphi_{F,X} \ot \id_Y)\right) \circ \alpha_{\tilde F,X,Y}^{-1}, 
		\nonumber\\
&= 
	 \raisebox{-0.5\height}{\includegraphics[height=0.12\textwidth]{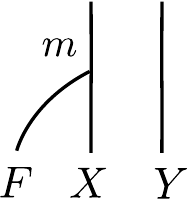}} \
	 + \raisebox{-0.5\height}{\includegraphics[height=0.12\textwidth]{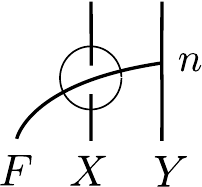}} 
\label{eq:Tpmorph}
 \end{align}
 On morphisms, $\ot$ is just the tensor product of $\CatD$: $f \ot_{\CatD_F} g := f \ot_\CatD g$. The associators and unit isomorphisms are also those from $\CatD$. It is easy to see that if $f,g$ satisfy \eqref{eq:DF-morph-diag}, so does $f \ot g$.

\begin{rem} 
The category $\CatD_F$ is a generalisation of the situation treated in \cite{MR09}. Specifically, suppose that $\CatD$ is braided with braiding $c$. Then there are two braided 
monoidal functors 
\begin{align} \label{eq:Functors=i+,i-}
 \iota^+: \, \CatD \to \DZ(\CatD) ~ , ~~ F \mapsto (F,c_{F,-}) \quad , \qquad 
 \iota^-: \, \CatD \to \DZ(\CatD) ~ , ~~ F \mapsto (F,c_{-,F}^{-1}) \ ,
\end{align}
defined to be identities on hom-sets. The setting of \cite{MR09} amounts to choosing $F = \iota^+(\tilde F) = (\tilde F,c_{\tilde F,-})$ for some $\tilde F$ in $\CatD$. Many properties of $\CatD_F$ carry over from \cite{MR09} to the present more general setting.
\end{rem}

In the following we will list some properties of $\CatD_F$ and then establish the equivalence with modules over the tensor algebra.

\begin{lem} \label{lem:exact_iff} 
 Suppose that the functor $\tilde F \ot (-): \CatD \to \CatD$ is right-exact. 
 Then a complex $(X,m) \stackrel{f}{\to} (X',m') \stackrel{f'}{\to} (X'',m'')$
 is exact at $(X',m')$ in $\CatD_F$ if and only if 
 $X \stackrel{f}{\to} X' \stackrel{f'}{\to} X''$ is exact at $X'$ in $\CatD$. 
\end{lem}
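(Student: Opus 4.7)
The plan is to show that the forgetful functor $U: \CatD_F \to \CatD$, $(X,m) \mapsto X$, creates both kernels and cokernels; once this is in hand, images (defined as $\ker \circ \operatorname{coker}$) are also created, and exactness at a term -- i.e.\ the equality $\operatorname{im}(f) = \ker(f')$ -- transfers in both directions between $\CatD_F$ and $\CatD$.

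First I would treat kernels, which do not require any hypothesis on $\tilde F \ot (-)$. Given a morphism $f: (X,m) \to (X',m')$ in $\CatD_F$, let $\iota: K \to X$ be a kernel of $f$ in $\CatD$. Then
\begin{align}
f \circ m \circ (\id_{\tilde F} \ot \iota)
\;=\; m' \circ (\id_{\tilde F} \ot (f \circ \iota))
\;=\; 0,
\end{align}
so by the universal property of $\iota$ there is a unique $m_K : \tilde F \ot K \to K$ with $\iota \circ m_K = m \circ (\id_{\tilde F} \ot \iota)$. This makes $\iota:(K,m_K) \to (X,m)$ a kernel of $f$ in $\CatD_F$: any test morphism in $\CatD_F$ factoring $f$ to zero lifts uniquely through $\iota$ in $\CatD$, and the lift automatically intertwines the $\tilde F$-actions because $\iota$ is mono in $\CatD$.

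For cokernels the right-exactness hypothesis enters. Let $\pi: X' \to C$ be a cokernel of $f$ in $\CatD$. Right-exactness of $\tilde F \ot (-)$ means that $\id_{\tilde F} \ot \pi$ is a cokernel of $\id_{\tilde F} \ot f$. Since
\begin{align}
\pi \circ m' \circ (\id_{\tilde F} \ot f) \;=\; \pi \circ f \circ m \;=\; 0,
\end{align}
the morphism $\pi \circ m'$ factors uniquely through $\id_{\tilde F} \ot \pi$, yielding a unique $m_C : \tilde F \ot C \to C$ with $m_C \circ (\id_{\tilde F} \ot \pi) = \pi \circ m'$. A routine check shows $(C,m_C)$ together with $\pi$ is a cokernel of $f$ in $\CatD_F$; uniqueness of the factorisation is precisely what fails without right-exactness, which is the one real obstacle in the proof.

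Combining the two constructions, the image of any $f$ in $\CatD_F$, computed as $\ker(\operatorname{coker}(f))$, has underlying object the image of $f$ in $\CatD$, and its module structure is the canonical one. Applied to the complex $(X,m) \xrightarrow{f} (X',m') \xrightarrow{f'} (X'',m'')$, exactness at $(X',m')$ is the equality of the subobjects $\operatorname{im}(f)$ and $\ker(f')$ of $(X',m')$ in $\CatD_F$; since both are obtained by equipping the analogous subobjects of $X'$ in $\CatD$ with their (unique) induced $\tilde F$-action, this equality holds in $\CatD_F$ if and only if it holds in $\CatD$.
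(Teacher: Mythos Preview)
Your argument is correct and follows the standard strategy: showing that the forgetful functor $U:\CatD_F\to\CatD$ creates kernels (always) and cokernels (using right-exactness of $\tilde F\ot(-)$ to make $\id_{\tilde F}\ot\pi$ an epimorphism), whence images and the comparison of subobjects transfer along $U$. This is precisely the approach taken in the paper, which defers to \cite[Lem.\,2.4]{MR09}; the only point noted there is that the right-exactness hypothesis is needed solely for the cokernel step (their Lemma~A.2(ii)), exactly as you observed.
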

\begin{proof}
 The proof of \cite[Lem.\,2.4]{MR09} also works in the present setting and under the weaker assumption 
 that only $\tilde F \ot (-)$ is right-exact. (In \cite[Lem.\,2.4]{MR09} it is assumed that $\otimes$ is itself right-exact; this enters the proof of \cite[Lem.\,A.2\,(ii)]{MR09} which however only involves $\tilde F \otimes -$).
\end{proof}

\begin{prop} \label{prop:D_F-abelian-etc} 
 Let $F = (\tilde F, \varphi_{F,-})$ be in $\DZ(\CatD)$. Then
\begin{enumerate}
 \item $(\CatD_F,\ot,\alpha,\one,\lambda,\rho)$ is a monoidal category. 
 \item If $\tilde F \ot (-)$ is right-exact then $\CatD_F$ is abelian. 
 \item If the tensor product in $\CatD$ is right-exact, then so is that of $\CatD_F$.
 \item Suppose $\tilde F \ot (-)$ is right-exact. If the tensor product in $\CatD$ is left-exact, then so is that of $\CatD_F$.
\end{enumerate}
\end{prop}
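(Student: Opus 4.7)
For part (i), the plan is to transport the associators and unit isomorphisms directly from $\CatD$, so the work reduces to two checks: (a) the assignment $((X,m),(Y,n)) \mapsto (X\ot Y, \tpmorph(m,n))$ respects associativity and the unit up to these coherence isomorphisms, and (b) $f\ot g$ remains a morphism in $\CatD_F$ whenever $f$ and $g$ are. For (a) the crucial identity is $\tpmorph(\tpmorph(m,n),p)=\tpmorph(m,\tpmorph(n,p))$ modulo associators. I would expand each side as a sum of three terms distinguished by whether $\tilde F$ ultimately acts on $X$, $Y$, or $Z$; the $X$-term matches trivially, while the $Y$- and $Z$-terms agree thanks to the hexagon identity $\varphi_{F,X\ot Y} = (\id_X\ot\varphi_{F,Y})\circ(\varphi_{F,X}\ot\id_Y)$ built into the definition of the monoidal centre. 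Unit axioms follow from $\varphi_{F,\one}=\id_{\tilde F}$. For (b) the intertwining diagram for $f\ot g$ against $\tpmorph(m,n)$ versus $\tpmorph(m',n')$ splits, via the definition of $\tpmorph$, into the two intertwining squares for $f$ and $g$ separately, glued along a naturality square of $\varphi_{F,-}$.

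For part (ii), I would apply Lemma \ref{lem:exact_iff}. Given $f:(X,m)\to(Y,n)$, take the kernel $i_K: K\hookrightarrow X$ of $f$ in $\CatD$; since $f\circ m\circ(\id_{\tilde F}\ot i_K) = n\circ(\id_{\tilde F}\ot(f\circ i_K)) = 0$, the composition $m\circ(\id_{\tilde F}\ot i_K)$ factors uniquely through $i_K$, giving an action $m_K:\tilde F\ot K\to K$ that makes $i_K$ a morphism in $\CatD_F$, and a straightforward universal-property argument then identifies $(K,m_K)$ as a kernel of $f$ in $\CatD_F$. For the cokernel $q_C: Y\twoheadrightarrow C$, one has $q_C\circ n\circ(\id_{\tilde F}\ot f) = q_C\circ f\circ m = 0$; right-exactness of $\tilde F\ot(-)$ ensures that $\id_{\tilde F}\ot q_C$ is a cokernel of $\id_{\tilde F}\ot f$, so $q_C\circ n$ descends uniquely to $m_C:\tilde F\ot C\to C$ making $q_C$ a morphism in $\CatD_F$. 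Finally, I would verify that every mono is the kernel of its cokernel and every epi the cokernel of its kernel using Lemma \ref{lem:exact_iff} together with the uniqueness of the induced actions on sub- and quotient objects.

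Parts (iii) and (iv) then follow essentially formally. Since the tensor product of morphisms in $\CatD_F$ is defined to coincide with that of $\CatD$, tensoring a sequence in $\CatD_F$ with a fixed object $(W,w)$ reduces, at the level of underlying $\CatD$-morphisms, to tensoring the underlying sequence in $\CatD$ with $W$. Lemma \ref{lem:exact_iff} transports exactness between $\CatD$ and $\CatD_F$ in both directions, so right- (respectively, left-) exactness of $\otimes_\CatD$ implies the same for $\otimes_{\CatD_F}$. In (iv) the hypothesis that $\tilde F\ot(-)$ is right-exact is needed only to ensure via (ii) that $\CatD_F$ is abelian, so that left-exactness of its tensor product is a meaningful notion. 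I expect the main obstacle to be the diagrammatic bookkeeping of the associativity check in (i); everything else is a direct translation via Lemma \ref{lem:exact_iff}.
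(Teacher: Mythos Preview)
Your proposal is correct and follows essentially the same approach as the paper, which merely notes that part (i) is straightforward and defers parts (ii)--(iv) to the analogous arguments in \cite{MR09}. Your sketch fills in exactly those details: the hexagon identity for the half-braiding gives associativity of $\tpmorph$, right-exactness of $\tilde F\ot(-)$ is used to descend the action to cokernels, and Lemma~\ref{lem:exact_iff} transports exactness between $\CatD$ and $\CatD_F$.
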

\begin{proof}
 It is straightforward to check that the associators $\alpha$ and unit isomorphisms 
 $\lambda,\rho$ are isomorphisms in $\CatD_F$ and turn $\CatD_F$ 
 into a monoidal category. Abelianness and the exactness properties follow along the same lines as Theorem 2.3 and 2.8 in \cite{MR09}.
\end{proof}

Via the forgetful functor $U: \DZ(\CatD) \to \CatD$ (which forgets the half-braiding of the monoidal centre), an algebra $A$ in $\DZ(\CatD)$ becomes an algebra in $\CatD$. For brevity we will denote the category of $U(A)$-modules in $\CatD$ by $A\Mod_\CatD$ instead of $U(A)\Mod_\CatD$.
	
\begin{prop} \label{prop:tensormod}
Suppose that $\CatD$ satisfies Condition \ref{cond:councop} and that $F \in \DZ(\CatD)$. Then $\CatD_F \cong \talg(F)\Mod_\CatD$
 as monoidal categories.
\end{prop}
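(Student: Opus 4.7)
The plan is to construct mutually inverse monoidal functors $\Phi : \CatD_F \to \talg(F)\Mod_\CatD$ and $\Psi : \talg(F)\Mod_\CatD \to \CatD_F$ and check that both agree with the tensor product structures on either side.

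On objects, $\Psi$ sends a module $(X,\rho)$ with action $\rho : \talg(F) \ot X \to X$ to the pair $(X, \, \rho \circ (\iota_1 \ot \id_X))$, where $\iota_1 : F \to \talg(F)$ is the canonical inclusion from \eqref{eq:iota-pi-tensoralg}; on morphisms it is the identity. For the other direction, given $(X,m) \in \CatD_F$, I would use the direct description \eqref{eq:talg} of $\talg(F)$ as $\bigoplus_n F^{\ot n}$ to define $\rho^m : \talg(F) \ot X \to X$ as the unique morphism whose restriction to $F^{\ot n} \ot X$ equals the $n$-fold iterate $m \circ (\id_F \ot m) \circ \cdots \circ (\id_{F^{\ot(n-1)}} \ot m)$ (with appropriate associators), and $\lambda_X$ on the $n=0$ summand. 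Checking the module axioms reduces to a routine induction using the explicit form of $\mu$ as $\sum_{n,m} \iota_{n+m} \circ \alpha_{n,m} \circ (\pi_n \ot \pi_m)$. That $\Psi \circ \Phi = \id$ is immediate from $\iota_1$-restriction; for $\Phi \circ \Psi = \id$ one observes that, for any $\talg(F)$-action $\rho$, associativity together with the fact that the multiplication in $\talg(F)$ is determined by the inclusions $\iota_n$ forces $\rho|_{F^{\ot n} \ot X}$ to be the $n$-fold iterate of $\rho|_{F \ot X}$, so $\rho$ is completely determined by its restriction via $\iota_1$.

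The key point is monoidal compatibility. On morphisms there is nothing to check, since both tensor products act as in $\CatD$, and the associators and unit isomorphisms are also inherited from $\CatD$ on both sides. For objects, recall that the tensor product in $\talg(F)\Mod_\CatD$ uses the Hopf algebra structure of $\talg(F)$: the action on $X \ot Y$ is
\begin{align*}
  \talg(F) \ot X \ot Y \xrightarrow{\Delta \ot \id} \talg(F) \ot \talg(F) \ot X \ot Y \xrightarrow{\id \ot \varphi_{\talg(F),X} \ot \id} \talg(F) \ot X \ot \talg(F) \ot Y \xrightarrow{\rho^X \ot \rho^Y} X \ot Y.
\end{align*}
Restricting along $\iota_1 : F \to \talg(F)$ and using $\Delta|_F = \iota_1 \ot \eta + \eta \ot \iota_1$ together with $\varepsilon|_F = 0$, the composite collapses to the two summands $m \ot \id_Y$ and $(\id_X \ot n) \circ (\varphi_{F,X} \ot \id_Y)$ (modulo associators and unit isomorphisms), which is exactly $\tpmorph(m,n)$ from \eqref{eq:Tpmorph}. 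Since both module structures are, by the argument above, determined by their restriction along $\iota_1$, this identification promotes to an equality of $\talg(F)$-actions.

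The step I expect to require the most care is the match between the two half-braidings: the $\varphi_{F,X}$ in \eqref{eq:Tpmorph} is the half-braiding of $F \in \DZ(\CatD)$ applied to $X \in \CatD$, while the one entering the tensor product of modules is $\varphi_{\talg(F),X}$, i.e.\ the half-braiding of $\talg(F) \in \DZ(\CatD)$. One must verify that the half-braiding on $\talg(F)$, which is induced from that of $F$ by the universal property applied in $\DZ(\CatD)$, restricts along $\iota_1 \ot \id_X$ to $\varphi_{F,X}$; this is where the assumption $F \in \DZ(\CatD)$ (rather than merely $F \in \CatD$) is essential and where convention \eqref{eq:prod-on-tensor-of-alg} and the placement of $c_{B,A}$ versus $c_{A,B}^{-1}$ need to be tracked carefully.
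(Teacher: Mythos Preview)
Your proposal is correct and follows essentially the same approach as the paper's proof: the paper defines the functor $\fcG = \Psi$ in your notation, constructs its inverse via the same iterated-action recipe (phrased as a universal property in diagram \eqref{eq:ta_univprop_2}), and verifies monoidality by restricting the coproduct-induced action along $\iota_1$ to recover $\tpmorph(m,n)$. Your concern in the last paragraph is unfounded: since $\talg(F)$ is formed in $\DZ(\CatD)$ with $\iota_1$ a morphism there, the half-braiding $\varphi_{\talg(F),X}$ is by construction $\bigoplus_n \varphi_{F^{\ot n},X}$, so its restriction along $\iota_1 \ot \id_X$ is literally $\varphi_{F,X}$.
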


\begin{proof}
We will make use of the canonical embeddings and projections $\iota_k$, $\pi_k$ as in \eqref{eq:iota-pi-tensoralg}.
Let $\fcG: \talg(F)\Mod_\CatD \to \CatD_F$ be the functor given on objects by 
 \begin{align}
 	\fcG: (X,m) \mapsto (X, m \circ (\iota_1 \ot \id_X)) \ , 
 \end{align}
 and which acts as the identity on morphisms. 
 
 \medskip\noindent
 {\em $\fcG$ is an equivalence:} Actually, $\fcG$ is even an isomorphism of categories. To construct its inverse, we use the following universal property of $\talg(X)$:
For each object $X$ in $\CatD$ and each morphism 
 $m|_F: F \ot X \to X$, there exists a unique left-action $m: \talg(F) \ot X \to X$ 
 making the following diagram commute in $\CatD$: 
\begin{align} \label{eq:ta_univprop_2}
	\xymatrix{ F \ot X \ar[rr]^{\iota_1 \ot \id_X} \ar[drr]_{m|_F} && \talg(F) \ot X \ar@{-->}[d]^{\exists !\, m} \\ &&X } 
\end{align}
 The action $m$ is given by
 $m := \sum_{k \in \N} m_k \circ (\pi_k \ot \id_X)$, where the $m_k: F^{\ot k} \ot X \to X$ are defined recursively as${}^{\ref{fn:noassoc}}$ 
 $m_0 := \id_X$, $m_{k+1} := m|_F \circ (\id_F \ot m_k)$. 
 For morphisms $f: (X,m|_F) \to (Y,n|_F)$ in $\CatD_F$ we need to check 
 that $f$ is also a morphism for the associated $\talg(F)$ modules. 
 To this end, it suffices to show that $f \circ m_k = n_k\circ(\id_{F^{\ot k}} \ot f)$ 
 holds for all $k \in \N$. This is obvious for $k = 0$. 
 For larger $k$ we proceed inductively, using the above 
 inductive representation of $m$ and $n$, namely 
 $f \circ m_{k+1} = f \circ m|_F \circ (\id_F \ot m_k) 
 	= n|_F \circ (\id_F \ot (f \circ m_k)) \stackrel{\text{(ind. hyp.)}}= 
 	n|_F \circ (\id_F \ot n_k) \circ (\id_{F^{\ot (k+1)}} \ot f) 
 	= n_{k+1} \circ (\id_{F^{\ot (k+1)}} \ot f)$. 

The uniqueness assertion of the universal property implies that the above construction gives the inverse to $\fcG$ on objects. 

 \medskip\noindent
 {\em $\fcG$ is monoidal:}
 If $(X,m)$, $(Y,n)$ are two $\talg(F)$-modules, then we have
 \begin{gather}
 	\raisebox{-0.5\height}{\includegraphics[scale=0.5]{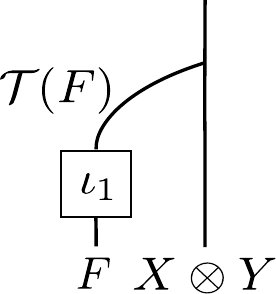}} 
 	= \ \raisebox{-0.5\height}{\includegraphics[scale=0.5]{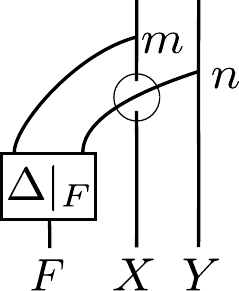}} 
 	= \ \raisebox{-0.5\height}{\includegraphics[scale=0.5]{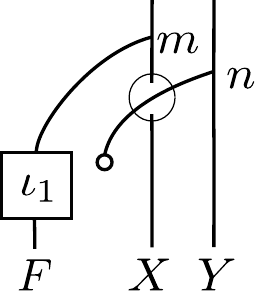}} 
 	+ \ \raisebox{-0.5\height}{\includegraphics[scale=0.5]{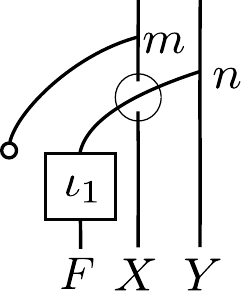}} 
 	= T(m|_F,n|_F) \ .
 \end{gather}
 In words, if one restricts the action via the coproduct of $\talg(F)$ on $X \ot Y$ to $F$, 
 one obtains the morphism $\tpmorph(-,-)$ as defined in \eqref{eq:Tpmorph}. This shows that $\fcG$ becomes monoidal with structure maps $\fcG_2 : \fcG(X,m) \otimes \fcG(Y,n) \to \fcG((X,m)\otimes (Y,n))$ and 
 $\fcG_0 : (\one,0) \to \fcG(\one,\pi_0 \ot \id_\one)$ 
 both given by the identity. 
\end{proof}

\section{Perturbed defects in two-dimensional field theory}
\label{sec:PertDef2D_FT}

In this section we will indicate how the algebraic constructions presented above and those to follow in Section \ref{sec:comm_cond_YD} are related to so-called non-local integrals of motion in two-dimensional conformal field theory and perturbations thereof. This is based on the original works \cite{BLZ96,BLZ97b,BLZ99} and their reinterpretation and generalisation in terms of perturbed defects in \cite{Ru07,MR09,Ru10}. As mentioned in the introduction, the application to CFT is the motivation to develop the present formalism.

As this section is quite long, a reader only interested in the mathematical definitions and theorems might as well skip ahead to Section \ref{sec:comm_cond_YD} without missing any of the mathematical content.

\medskip

A conformal field theory with defects assigns correlators (i.e.\ multilinear functions on products of vector spaces called ``spaces of states/fields'') to two-dimensional Riemann surfaces with parametrised boundaries and/or marked points. The two dimensional surfaces can be decorated by embedded oriented 1-manifolds -- the ``defect lines'' -- whose connected components are labelled by a set of ``defect conditions''.
There is a straightforward extension of the axiomatic description of CFT (as e.g.\ in \cite{Vafa87,SegalDef}) to CFT in the presence of defect lines, see \cite{RuSu08,DKR11} for details. Examples can be constructed in rational CFT
via its description in terms of three-dimensional topological field theory as developed in \cite{FRS02,FRS05,FFRS06}. 
The TFT construction assumes certain monodromy and factorisation properties of the conformal blocks which have been proved only in genus 0 and 1 (see \cite{Huang:2003cq}); our construction only uses the genus 0 part.
We will not review the general formalism but will just pick out the small part of it that we will need.

\subsection{Cylinders with defect circles and defect fields} \label{sec:cyl+def-field}

\begin{figure}[bt]

\begin{align*}
\begin{array}{llll}
\text{(a)}
& 
&\text{(c)}
& C_{\varepsilon_1,\varepsilon_2,\varepsilon_3}^{X_1,\ldots,X_m; Y_1,\ldots,Y_n} \\
& C_\varepsilon := && \qquad	(x_1,\ldots,x_m; y_1,\ldots,y_n) := \\
& \raisebox{-0.5\height}{\includegraphics[scale=.48]{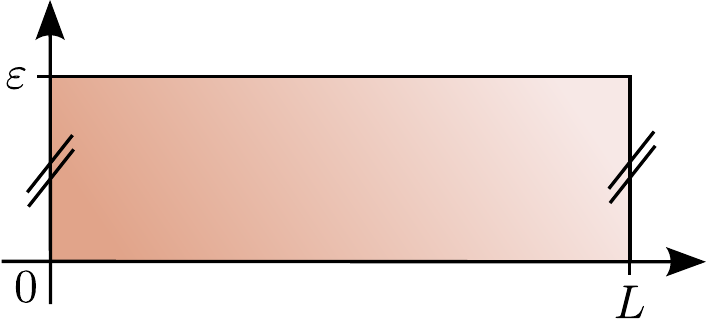}}
&& \raisebox{-0.5\height}{\includegraphics[scale=.48]{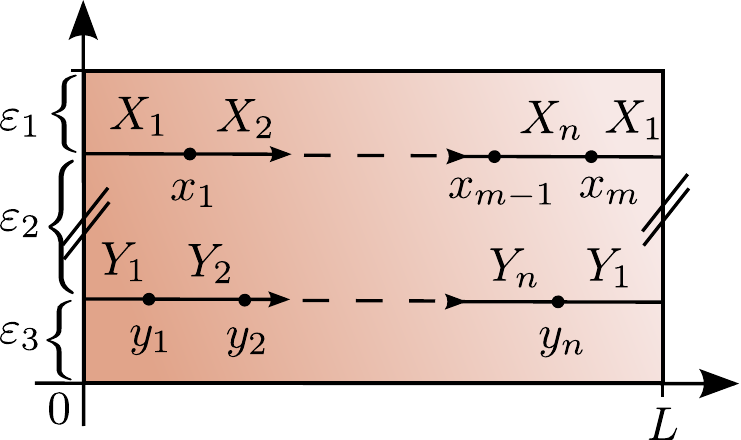}} \\ 
\text{(b)}
& C_{\varepsilon_1,\varepsilon_2}^{X_1,\ldots,X_n}(x_1,\ldots,x_n) := &&\\
& \raisebox{-0.5\height}{\includegraphics[scale=.48]{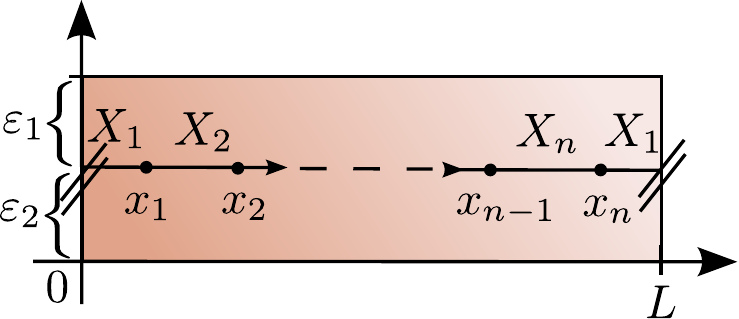}}
\end{array}
\end{align*}

\caption{Cylinders of circumference $L$ with none, one or two defect circles; the vertical sides are identified. The $X_i$ and $Y_i$ are defect conditions, the $x_i$ and $y_i$ are insertion points of defect changing fields.}
\label{fig:defect-cylinders}
\end{figure}

Three types of surfaces are going to appear below, namely cylinders of circumference $L$ with none, one or two defect circles inserted, see Figure \ref{fig:defect-cylinders}. To explain these diagrams and what the defect CFT assigns to them we need to introduce the space of bulk states, defect conditions and defect changing fields.

\medskip\noindent
{\em The space of bulk states}: a topological vector space $\mathcal{H}$ which we take to be a Hilbert space for concreteness. We require that $\mathcal{H}$ contains a dense subspace $\mathcal{F}$, the {\em space of fields}, which carries an action of $\mathrm{Vir} \oplus \mathrm{Vir}$, where $\mathrm{Vir}$ denotes the Virasoro algebra. We denote the generators of the first copy of $\mathrm{Vir}$ by $L_m,C$ and those of the second copy by $\overline L_m,\overline C$. We require that $\mathcal{F}$ is a direct sum of common eigenspaces $\mathcal{F}^{(h,\bar h)}$ of $L_0$ and $\overline L_0$ of non-negative eigenvalues,
\begin{equation}
  \mathcal{F} = \bigoplus_{h,\bar h\in\mathbb{R}_{\geq 0}} \mathcal{F}^{(h,\bar h)} \ .
\end{equation}
We assume that the eigenspaces $\mathcal{F}^{(\Delta)} = \bigoplus_{h+\bar h = \Delta} \mathcal{F}^{(h,\bar h)}$ of the generator $L_0+\overline L_0$ of scale transformations are finite-dimensional, and that $C,\overline C$ act by multiplication by constants $c, \bar c$ (called the left and right central charge). 

The requirements listed above are not the most general setting for conformal field theory. They exclude, for example Liouville and other non-compact CFTs, or logarithmic and other non-unitary CFTs. But they allow for all unitary rational CFTs and are sufficient to illustrate the origin of the formalism developed in the rest of this section. 

The {\em Hamiltionian} of the CFT on a cylinder of circumference $L$ is
\begin{equation}
  H_0 = \frac{2 \pi}{L}\Big( L_0 + \overline L_0 - \frac{c+\bar c}{24} \Big)
\end{equation}
and it acts on $\mathcal{F}$. By assumption, $e^{-\eps H_0}$ extends to a bounded linear operator on $\mathcal{H}$ for all $\eps>0$. This is the operator the CFT assignes to the cylinder $C_\eps$ in Figure \ref{fig:defect-cylinders}\,a):
\begin{equation}
  \mathcal{O}(C_\eps) = e^{-\eps H_0} \ .
\end{equation}

\medskip\noindent
{\em The set of defect conditions}: 
The defect circles in Figure \ref{fig:defect-cylinders}\,b),\,c) carry marked points, which we will call {\em field insertion points}. The intervals between two such insertion points are labelled by elements from a set of defect conditions.
We will only consider defect conditions that correspond to {\em topological defects}, which are defects that are transparent to the stress-energy tensor.
It turns out that a CFT with defects equips this set of defect conditions with the extra structure of a monoidal category (more precisely, one passes to lists of defect conditions, see \cite[Sect.\,2.4]{DKR11} for details).

In the CFT context, the monoidal category $\CatD$ in Section \ref{sec:tensor_algebras} is 
the category of defect conditions. For this reason, we will denote the set of defect conditions by
\begin{equation}
  \mathrm{Obj}(\CatD)
\end{equation}
(and assume that $\mathrm{Obj}(\CatD)$ is a set). The unit object $\one \in \CatD$ corresponds to the ``invisible defect'' in the sense that a defect segment labelled by $\one$ can be omitted from the surface. Thus, the cylinder in Figure \ref{fig:defect-cylinders}\,a) is a special case of that in Figure \ref{fig:defect-cylinders}\,b) when choosing $n=0$ and $\one$ as defect condition, and Figure \ref{fig:defect-cylinders}\,b) in turn is a special case of c). 

\medskip\noindent
{\em Defect fields and defect changing fields}: 
For each pair of defect labels $X,Y$ there is a space of {\em defect changing fields} $\mathcal{F}_{XY}$. Fields from $\mathcal{F}_{XY}$ can be assigned (see below) to a field insertion point between a segment labelled $X$ and a segment labelled $Y$; the order is determined by the orientation of the defect circle. The spaces of defect changing fields are subject to the same conditions as the space $\mathcal{F}$ of bulk fields above. 
That the $\mathcal{F}_{XY}$ are again $\mathrm{Vir}\oplus\mathrm{Vir}$-modules follows as all defects are topological.
Elements of $\mathcal{F}_{XX}$ are also called {\em defect fields}. Defect fields on the invisible defect $\one$ are nothing but bulk fields: $\mathcal{F} \equiv \mathcal{F}_{\one\one}$.

To the cylinder $C_{\eps_1,\eps_2}^{X_1,\dots,X_n}(x_1,\dots,x_n)$ in Figure \ref{fig:defect-cylinders}\,b) the CFT assigns a multilinear map to the bounded operators on $\mathcal{H}$,
\begin{equation} \label{eq:corr-cylinder-npoint}
  \mathcal{O}(C_{\eps_1,\eps_2}^{X_1,\dots,X_n}(x_1,\dots,x_n)) 
  : \mathcal{F}_{X_1 X_2} \times \mathcal{F}_{X_2 X_3} \times \cdots \times \mathcal{F}_{X_n X_1}
  \longrightarrow B(\mathcal{H}) \ ,
\end{equation}
where the insertion points are ordered as $0 \le x_1 < x_2 < \cdots < x_n < L$ and $X_k$ is the defect label for the segment $(x_{k-1},x_k)$,  $k=2,\dots,n$, and for the segment between $x_1$ and $x_n$ in the case $k=1$.

The assignment of operators to cylinders is compatible with composition. For example, $\mathcal{O}(C_\eps) \circ \mathcal{O}(C_{\eps_1,\eps_2}^{X_1,\dots,X_n}(x_1,\dots,x_n)) = \mathcal{O}(C_{\eps_1+\eps,\eps_2}^{X_1,\dots,X_n}(x_1,\dots,x_n))$, and composing the two operators which the CFT assigns to two cylinders with a single defect circle as in Figure \ref{fig:defect-cylinders}\,b) gives the operator for a cylinder with two defect circles as in Figure \ref{fig:defect-cylinders}\,c).

In the case $n=0$, i.e.\ in the absence of insertion points on the defect circle, we can try to define an operator on $\mathcal{H}$ as the limit
\begin{equation}
  \mathcal{O}(X) = \lim_{\eps_1,\eps_2 \to 0}  \mathcal{O}(C_{\eps_1,\eps_2}^{X}) \ .
\end{equation}
This limit may not exist for general defect conditions, but for topological defects the operator $\mathcal{O}(X)$ is defined at least on $\mathcal{F}$, and in fact maps $\mathcal{F}$ to $\mathcal{F}$, and it intertwines the Virasoro actions:
\begin{equation}
  [L_m , \mathcal{O}(X)] = 0 = [\overline L_m , \mathcal{O}(X)] 
  \qquad
  \text{for all} ~~ m \in \mathbb{Z}  ~,~ X \in \CatD \ . 
\end{equation}

\begin{rem} \label{rem:top-def-in-RCFT}
Let us outline how the category $\CatD$ of defect conditions can be realised in rational CFT. Fix a rational vertex operator algebra $\mathcal{V}$; by ``rational'' we mean that the category of modules $\Cat := \mathrm{Rep}(\mathcal{V})$ is a modular category, see \cite{Hu08}. In particular, $\Cat$ is braided monoidal, $\mathbb{C}$-linear and semisimple. In the approach to correlators of rational CFT via three-dimensional topological field theory in \cite{FRS02,FRS05,FFRS06,FFS12}, the different CFTs with symmetry $\mathcal{V} \otimes_{\mathbb{C}} \mathcal{V}$ are described by algebras $A \in \Cat$, more precisely by special symmetric Frobenius algebras. The defects in the CFT described by $A$ which are transparent to all fields in $\mathcal{V} \otimes_{\mathbb{C}} \mathcal{V}$  (not just to the stress-energy tensor) are labelled by $A|A$-bimodules. In fact, in this situation one has
\begin{equation} \label{eq:D=A-mod-A-RCFT}
  \CatD \cong A\text{-Mod$_\Cat$-}A 
\end{equation}
as $\mathbb{C}$-linear monoidal categories.
\end{rem}

\subsection{Perturbed defects and the category $\CatD_F$}
\label{ssec:pert-def-and-cat}

In quantum field theory, a standard method to modify a given model is ``to add new terms to the action''. If expressed via correlators, this amounts to inserting a term $e^{-\Delta S}$, where $\Delta S$ is some expression in terms of the fields and the exponential is understood as a formal sum whose precise definition depends on the situation. Here we are interested in the case that $-\Delta S = \lambda \int_0^L \psi(x) dx$, where $\psi$ is a defect field on some defect $X$ on a cylinder of circumference $L$. Let is explain this in more detail.

\medskip

Pick a defect label $X \in \CatD$ and a defect field $\psi \in \mathcal{F}_{XX}$. In expression \eqref{eq:corr-cylinder-npoint}, choose all $X_k$ to be $X$ and use $\psi$ in each of the $n$ arguments to define
\begin{equation} \label{eq:f-def-integrands}
f_{\eps_1,\eps_2;n}^X(\psi;x_1,\dots,x_n)
= \mathcal{O}(C_{\eps_1,\eps_2}^{X,\dots,X}(x_1,\dots,x_n))(\psi,\psi,\dots,\psi) ~ \in B(\mathcal{H}) \ .
\end{equation}
Next we integrate over the insertion points $x_1,\dots,x_n$ while preserving the ordering,
\begin{equation} \label{eq:e-def-integrals}
e_{\eps_1,\eps_2;n}^X(\psi)
= \int_0^L dx_1 \int_{x_1}^L dx_2 \cdots \int_{x_{n-1}}^L dx_n ~
f_{\eps_1,\eps_2;n}^X(\psi;x_1,\dots,x_n) \ .
\end{equation}
This integral may or may not exist, depending on the choice of $X$ and $\psi$. We assume that it exists, so that $e_{\eps_1,\eps_2;n}^X$ is again an operator on $\mathcal{H}$. 

We can now state what we mean by ``the defect $X$ perturbed by the defect field $\psi$'', or, equivalently, what we mean by ``inserting $\exp(\lambda \int \psi(x) dx)$ on a defect circle $X$''. Namely, take $\lambda$ to be a formal variable and consider the formal sum (a path ordered exponential)
\begin{equation} \label{eq:D-eps1eps2-def}
D_{\eps_1,\eps_2}^X(\psi;\lambda)
= \sum_{n=0}^\infty \lambda^n e_{\eps_1,\eps_2;n}^X(\psi) \ .
\end{equation}
The defect operator for the topological defect $X$ perturbed by the defect field $\psi$ is defined to be the limit of zero height of the surrounding cylinder:
\begin{equation} \label{eq:O(X,psi,lam)}
\mathcal{O}(X,\psi;\lambda) = \lim_{\eps_1,\eps_2 \to 0} D_{\eps_1,\eps_2}^X(\psi;\lambda) \ .
\end{equation}
Again, existence of the limit (as an operator on $\mathcal{H}$ with dense domain) will in general depend on the choice of $X$ and $\psi$.

The operators $\mathcal{O}(X,\psi;\lambda)$ are the main object of our interest. For which $X,\psi$ the $\mathcal{O}(X,\psi;\lambda)$ exist and what the radius of convergence in $\lambda$ is, are difficult questions. In the free boson example, they have been addressed in \cite[App.\,3]{BLZ99} and we will further investigate them in a forthcoming paper \cite{InProgr}. A general expectation would be that the integrals \eqref{eq:e-def-integrals} and the limit \eqref{eq:O(X,psi,lam)} exist if $\psi \in \mathcal{F}_{XX}^{(\Delta)}$ with $\Delta < \tfrac12$. The reason is that the leading singularity of the integrand \eqref{eq:f-def-integrands} in the limit of two coinciding points will generically be $(x_{i+1}-x_i)^{-2\Delta}$. However, one would also expect that some regulator exists to define the integrals for $\tfrac12 \le \Delta \le 1$. Defect fields of weight $\Delta<1$, $\Delta=1$, $\Delta>1$ are called relevant, marginal, and irrelevant, respectively. We do not know if there is a generic behaviour to expect regarding convergence of the power series in $\lambda$.

\medskip

\begin{table}
\begin{center}
\renewcommand{\arraystretch}{1.4}
\begin{tabular}{ccc|cc}
$\Cat$ & $\Cat \boxtimes \Cat^\mathrm{rev}$ & $\DZ(\Cat)$ & $\CatD$ & $\DZ(\CatD)$ 
\\
\hline
$F_\Cat$ & $F_{\Cat^2}$ & $F_{\DZ\Cat}$ & $\tilde F$ & $F$
\end{tabular}
\renewcommand{\arraystretch}{1}
\end{center}
\caption{Notation for related objects in the five categories that appear in the application to CFT. Starting from the right, $F$ is the object in the monoidal centre of the defect category $\CatD$. This is the datum used to define $\CatD_F$, see Definition \ref{df:catdf}. As an element of the monoidal centre, $F$ is a pair $(\tilde F, \varphi_{F,-})$ with $\tilde F \in \CatD$. Turning to the left, $\Cat$ stands for the (braided) category of representations of the underlying vertex operator algebra $\mathcal{V}$ and $F_\Cat$ is one such representation. Combined representations of the holomorphic and anti-holomorphic copy of the vertex operator algebra, i.e.\ of $\mathcal{V} \otimes_{\C} \mathcal{V}$, correspond to objects $F_{\Cat^2} \in \Cat \boxtimes \Cat^\mathrm{rev}$. 
The functors in \eqref{eq:Functors=i+,i-} allow one to map $\Cat \boxtimes \Cat^\mathrm{rev}$ to $\DZ(\Cat)$ 
 and so we can also think of objects $F_{\DZ\Cat}\in\DZ(\Cat)$ as $\mathcal{V} \otimes_{\C} \mathcal{V}$-modules. 
 We will write $F_{\DZ\Cat} = (F_\Cat, \varphi_{F_{\DZ\Cat},-})$, where $F_\Cat$ is the underlying object in $\Cat$. Finally, the functor $\alpha$ defined in \eqref{eq:alpha-def} maps $\DZ(\Cat)$ to $\DZ(\CatD)$, and we write $F$ for the image of $F_{\DZ\Cat}$ under that functor.}
\label{tab:F-notation}
\end{table}

To make the connection to the category $\CatD_F$, we will restrict ourselves to rational CFT in the setting described in Remark \ref{rem:top-def-in-RCFT}. 
Unfortunately, in the application to CFT we have to deal with five categories, each of which contain objects related to $F$. This is described below and it is summarised once more in Table \ref{tab:F-notation}.
We pick $F_{\Cat^2} \in  \mathrm{Rep}(\mathcal{V} \otimes_{\mathbb{C}} \mathcal{V}) = \Cat \boxtimes \Cat^\mathrm{rev}$, where $\Cat^\mathrm{rev}$ is $\Cat$ with inverse braiding. Via \eqref{eq:Functors=i+,i-} we have a functor $\Cat \boxtimes \Cat^\mathrm{rev} \to \DZ(\Cat)$; if $\Cat$ is modular, this functor is an equivalence \cite[Thm.\,7.10]{Mu03b}. We write $F_{\DZ\Cat}$ for the image of $F_{\Cat^2}$ in $\DZ(\Cat)$. 

To $F_{\DZ\Cat} = (F_\Cat, \varphi_{F_{\DZ\Cat},-}) \in \DZ(\Cat)$ we assign the $A|A$-bimodule $F$ with underlying object $A \ot F_\Cat$, with left action by multiplication and with right action by first using the half-braiding to move $A$ past $F_\Cat$ and then multiplying. In formulas,
 \begin{align}  
 	\aind(F_{\DZ\Cat}) = \big( \, A \ot F_\Cat \,,\, \mu \ot \id_{F_\Cat} \,,\, (\mu \ot \id_{F_\Cat}) \circ (\id_A \ot \varphi_{F_{\DZ\Cat},A})\,\big) \ . 
	\label{eq:alpha-def}
 \end{align}
 For $(X,m_l,m_r) \in A\BMod[\Cat] A$ let 
 $\varphi_{\aind(F_{\DZ\Cat}),X}: \aind(F_{\DZ\Cat}) \ot_A X \to X \ot_A \aind(F_{\DZ\Cat})$ 
 be the morphism in $A\BMod[\Cat] A$ induced by the morphism $A \ot F_\Cat \ot X \to X \ot A \ot F_\Cat$, 
 \begin{gather}
 	(\id_X \ot \eta \ot \id_{F_\Cat}) \circ (m_l \ot \id_{F_\Cat}) \circ (\id_A \ot \varphi_{F_{\DZ\Cat},X}) 
 		= \ \raisebox{-0.5\height}{\includegraphics[scale=0.5]{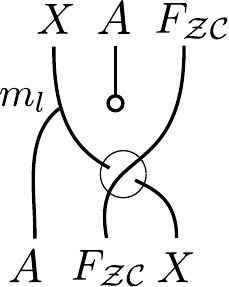}} \ .
 \end{gather}

One verifies that $\varphi_{\aind(F_{\DZ\Cat}),X}$ is an isomorphism in 
$A\BMod[\Cat] A$. In fact, we have:

\begin{prop} (cf.\ \cite[Prop.\,3.2]{Sch01}) \label{prop:alphaind}
	The assignment 
	\begin{align} 
		\aind: \; \DZ(\Cat) \to \DZ(A\BMod[\Cat] A) \quad , \qquad 
			F_{\DZ\Cat} \mapsto (\aind(F_{\DZ\Cat}), \varphi_{\aind(F_{\DZ\Cat}),-}) \ . 
	\end{align}
	defines a braided monoidal functor.
\end{prop}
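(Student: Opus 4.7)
The plan is to verify, in order, that (i) $\aind$ lands in $A\BMod[\Cat] A$ and is functorial, (ii) the prescribed half-braiding promotes $\aind(F_{\DZ\Cat})$ to an object of $\DZ(A\BMod[\Cat] A)$, (iii) $\aind$ is monoidal, and (iv) it respects the braidings on both centres. For (i) I first check that the left action $\mu \ot \id_{F_\Cat}$ and the right action built by first applying $\varphi_{F_{\DZ\Cat},A}$ and then $\mu$ really satisfy the bimodule axioms. Associativity and unitality of the left action are immediate from those of $A$; for the right action I use associativity of $\mu$ together with the naturality of $\varphi_{F_{\DZ\Cat},-}$ applied to $\mu: A \ot A \to A$ (i.e.\ the hexagon identity for the half-braiding on $A \ot A$). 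Commutation of the two actions uses the same hexagon. On morphisms, $\aind$ sends $f: F_{\DZ\Cat} \to G_{\DZ\Cat}$ to $\id_A \ot f$, and bimodule linearity is then a direct consequence of naturality of $\varphi$ in its first argument.

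For (ii), the core task is to show that the morphism $A \ot F_\Cat \ot X \to X \ot A \ot F_\Cat$ displayed in the statement descends to a well-defined morphism between the relative tensor products $\aind(F_{\DZ\Cat}) \ot_A X \to X \ot_A \aind(F_{\DZ\Cat})$. Realising $\ot_A$ as a coequaliser, I check the coequaliser identification by pushing an $A$-strand through $\varphi_{F_{\DZ\Cat},-}$ using its naturality with respect to $m_l: A \ot X \to X$ and then collapsing with associativity of $\mu$; the same chase shows the target is really $X \ot_A \aind(F_{\DZ\Cat})$. Bimodule-linearity of $\varphi_{\aind(F_{\DZ\Cat}),X}$ is a short diagram manipulation using the bimodule axioms for $X$, and invertibility is witnessed by the morphism built analogously from $\varphi_{F_{\DZ\Cat},-}^{-1}$. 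The hexagon axiom for the half-braiding on $\aind(F_{\DZ\Cat})$ in $\DZ(A\BMod[\Cat] A)$, after unfolding the relative tensor product, is exactly the hexagon for $\varphi_{F_{\DZ\Cat},-}$ in $\DZ(\Cat)$ with auxiliary $A$-strands inserted and re-associated.

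For (iii), the unit constraint is $\aind_0: A \to \aind(\one_{\DZ\Cat}) = A \ot \one$ given by the unit isomorphism of $\Cat$, which is clearly a bimodule map. The tensor-product coherence $\aind_2: \aind(F_{\DZ\Cat}) \ot_A \aind(G_{\DZ\Cat}) \to \aind(F_{\DZ\Cat} \ot G_{\DZ\Cat})$ is the canonical isomorphism $(A \ot F_\Cat) \ot_A (A \ot G_\Cat) \cong A \ot F_\Cat \ot G_\Cat$ induced by collapsing the middle $A$; the point requiring verification is that under $\aind_2$ the prescribed half-braiding on the left -- which by the general formula for tensor products in the centre is $(\id \ot \varphi_{\aind(G_{\DZ\Cat}),-}) \circ (\varphi_{\aind(F_{\DZ\Cat}),-} \ot \id)$ -- matches $\varphi_{\aind(F_{\DZ\Cat} \ot G_{\DZ\Cat}),-}$, and this follows because the half-braiding on $F_{\DZ\Cat} \ot G_{\DZ\Cat}$ in $\DZ(\Cat)$ is defined by the same recipe. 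The coherence axioms for $\aind_0, \aind_2$ are then straightforward. Claim (iv) is automatic once (ii) and (iii) hold, since the braiding in a monoidal centre is the half-braiding evaluated on a centre-object, and both sides are by construction given by $\varphi_{F_{\DZ\Cat},G_\Cat}$ decorated with $A$-strands. The main obstacle is the bookkeeping in step (ii): one must carefully track through the relative tensor product over $A$ that the definition of $\varphi_{\aind(F_{\DZ\Cat}),X}$ is independent of representatives and yields a bimodule map. This amounts to a routine but delicate string-diagram verification along the lines of \cite{Sch01}.
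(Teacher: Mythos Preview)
The paper does not give its own proof of this proposition; it simply records the statement and defers to \cite[Prop.\,3.2]{Sch01}. Your direct verification is correct and is essentially what one finds upon unpacking that reference: check that $A\ot F_\Cat$ is an $A|A$-bimodule via the half-braiding, that the induced map on relative tensor products is well-defined (the coequaliser check you describe), that the evident isomorphism $(A\ot F_\Cat)\ot_A(A\ot G_\Cat)\cong A\ot F_\Cat\ot G_\Cat$ supplies the monoidal structure, and that braidedness is then automatic from the definition of the braiding in a monoidal centre. There is nothing to compare against beyond the citation, and your outline contains no gaps.
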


\begin{rem}\label{rem:alpha-frs-relation}
	Denote the functors $\aind \circ \iota^\pm: \Cat \to \DZ(A\BMod[\Cat] A)$ by $\aind^\pm$.
	There are isomorphisms $\aind^\pm(F_\Cat) \ot_A X \cong F_\Cat \ot^\pm X$, 
	where $F_\Cat \ot^\pm X$ are as in \cite[Eqn. (2.17), (2.18)]{FRS05}. 
	The notation $\ot^\pm$ is used 
	extensively in \cite{FRS02} and \cite{FRS05} and this remark 
	may be useful to compare results. 
\end{rem}

Combining the equivalence $\Cat \boxtimes \Cat^\mathrm{rev} \to \DZ(\Cat)$ with $\alpha$ from Proposition \ref{prop:alphaind} gives a functor $\Cat \boxtimes \Cat^\mathrm{rev} \to \DZ(\CatD)$ which by abuse of notation we also call $\alpha$.
The space of $\mathcal{V} \otimes_{\mathbb{C}} \mathcal{V}$-intertwiners from $F_{\Cat^2}$ to the space of defect fields $\mathcal{F}_{XX}$ is given by (see \cite{FRS05,FFRS06} for details)
\begin{equation} \label{eq:Hom_VV-iso-Hom_AA}
  \Hom_{\mathcal{V} \otimes_{\mathbb{C}} \mathcal{V}}(F_{\Cat^2},\mathcal{F}_{XX})
  \cong \Hom_{A|A}(\alpha(F_{\Cat^2})\otimes_A X,X) \ ,
\end{equation}
which defines $\mathcal{F}_{XX}$ as the object representing the functor (in $F_{\Cat^2}$) on the right hand side.
Thus, given $F_{\Cat^2}$ and $\psi \in F_{\Cat^2}$, an object
\begin{equation} \label{eq:rel-DF-aux1}
  (X,m) \in \CatD_F
  \qquad ,~~\text{where}
  ~~ F = \alpha(F_{\Cat^2}) \in \DZ(\CatD) \ ,
\end{equation}
describes a defect field $\psi^m$ on the defect $X$, obtained by mapping $\psi \in F_{\Cat^2}$ to $\mathcal{F}_{XX}$ via the preimage of $m$ under the isomorphism \eqref{eq:Hom_VV-iso-Hom_AA}. We will abbreviate the operator in \eqref{eq:O(X,psi,lam)} as
\begin{equation} \label{eq:ODlam-abbrev}
  \mathcal{O}(D;\lambda) \equiv \mathcal{O}(X,\psi^m;\lambda) 
  \qquad ,~~\text{where}
  ~~ D = (X,m) \in \CatD \ ,
\end{equation}
and it is understood that $F_{\Cat^2}$ and $\psi \in F_{\Cat^2}$ are chosen once and for all.

The morphisms in $\CatD_F$ equally have a direct interpretation in terms of defect fields. To prepare this, consider the subspace of scale- and translation-invariant states $\mathcal{F}_{XY}^\mathrm{vac} \subset \mathcal{F}_{XY}$ ('vac' for vacuum), i.e.\footnote{
  Actually, the condition $L_{-1} \psi =0$ implies $L_m \psi =0$ for all $m \ge 0$ (and analogously for $\overline L_{-1}$) as in this case the vertex operator $Y(T,z)$, where $T$ is the Virasoro element, generates no negative powers of $z$ when acting on $\psi$.}
\begin{equation}
  \mathcal{F}_{XY}^\mathrm{vac} = \{ \psi \in  \mathcal{F}_{XY} | L_m \psi =0= \overline L_m \psi \text{ for } m=0,-1 \} \ .
\end{equation}
By construction of the morphism spaces of the category $\CatD$ (see \cite[Sect.\,2.4]{DKR11} for details), we have $\mathcal{F}_{XY}^\mathrm{vac} = \Hom_\CatD(X,Y)$. The composition in $\CatD$ then gives the operator product expansion of the corresponding fields (translation invariance implies the absence of short-distance singularities). Let now $f: (X,m) \to (Y,n)$ be a morphism in $\CatD_F$. Then we have the identity of operators
\begin{align}\label{eq:f-exchange-condition}
&\mathcal{O}\left(\raisebox{-.5\height}{\includegraphics[scale=.5]{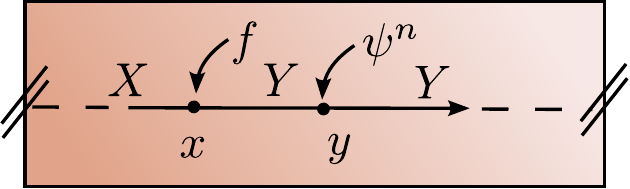}}\right)
	(\dots,f,\psi^n,\dots) \nonumber\\
&\qquad =
\mathcal{O}\left(\raisebox{-.5\height}{\includegraphics[scale=.5]{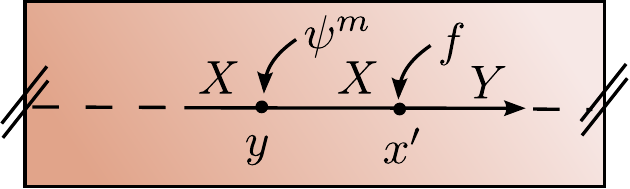}}\right)(\dots,\psi^m,f,\dots) \ .
\end{align}
Here, the pictures represent a cylinder as in \eqref{eq:corr-cylinder-npoint}, and the dotted part of the defect circle stands for any arrangement of field insertions common to both sides. In words, the defect changing field $f$ converts the topological defect $X$ perturbed by $\psi^m$ into $Y$ perturbed by $\psi^n$. The identity \eqref{eq:f-exchange-condition} is best understood in the TFT formulation, see \cite[Sect.\,3]{MR09}. 

\begin{table}
\begin{center}
\begin{tabular}{p{17em}|p{19em}}
quantities in $\CatD$ and $\CatD_F$
&	conformal field theory concept
\\
\hline
\\[-.5em]
object $X \in \CatD$
&	defect condition $X$ for which the corresponding defect is topological and transparent to $\mathcal{V} \otimes_{\mathbb{C}} \mathcal{V}$
\\[.5em]
object $F \in \DZ(\CatD)$
&	
obtained from the representation $F_{\Cat^2}$ in which the perturbing field $\psi$ lives via $F = \alpha(F_{\Cat^2})$, see \eqref{eq:rel-DF-aux1}
\\[.5em]
morphism $m \in \Hom_\CatD(F \otimes X,X)$
&
	determines the defect field 
	$\psi^m \in \mathcal{F}_{XX}$ as the preimage of $\psi$ under \eqref{eq:Hom_VV-iso-Hom_AA}
\\[.5em]
object $(X,m) \in \CatD_F$
&	determines a topological defect $X$ together with a choice of perturbing field $\psi^m$ on $X$ as in \eqref{eq:ODlam-abbrev}
\\[.5em]
morphism $f: (X,m) \to (Y,n)$ in $\CatD_F$ 	
&	a translation invariant field $f \in \mathcal{F}_{XX}^\mathrm{vac}$ which satisfies the
	exchange condition \eqref{eq:f-exchange-condition}
\end{tabular}
\end{center}
\caption{Relation between quantities in $\CatD$ and $\CatD_F$ and conformal field theory concepts. See also Table \ref{tab:F-notation}.}
\label{tab:DF-CFT-rel}
\end{table}

The relation between the quantities described above and conformal field theory concepts is summarised in the Table \ref{tab:DF-CFT-rel}.
While the above discussion is for rational $\mathcal{V}$, we believe that this construction applies to a wider class of vertex operator algebras. In a forthcoming paper \cite{InProgr} we will investigate the case of the free boson, i.e.\ of the Heisenberg vertex operator algebra, more closely.

\subsection{Composition of defect operators and the tensor product in $\CatD_F$}
\label{ssec:tensor-prod-of-pert-def}

As already mentioned above, the composition of two operators associated to a cylinder as in Figure \ref{fig:defect-cylinders}\,b) results in an operator associated to a cylinder as in Figure \ref{fig:defect-cylinders}\,c). As a formula, this reads
\begin{align}\label{eq:compos-def-aux1}
&  
\mathcal{O}(C_{\eps_1,\eps_2}^{X,\dots,X}(\vec x))(\phi_1,\dots,\phi_m) 
\circ
\mathcal{O}(C_{\eps_3,\eps_4}^{Y,\dots,Y}(\vec y))(\psi_1,\dots,\psi_n) 
\nonumber  \\
&  =
  \mathcal{O}(C_{\eps_1,\eps_2+\eps_3,\eps_4}^{X,\dots,X;Y,\dots,Y}(\vec x; \vec y))(\phi_1,\dots,\psi_n)
  \ , 
\end{align}
where $\vec x = (x_1,\dots,x_m)$, $\vec y = (y_1,\dots,y_n)$, and $\phi_i \in \mathcal{F}_{XX}$, $\psi_i \in \mathcal{F}_{YY}$, and where we chose fixed defect conditions $X,Y \in \CatD$ on each defect circle.

If $x_i \neq y_j$ for all $i,j$, we can take the limit $\eps_2+\eps_3 \to 0$ on the right hand side of \eqref{eq:compos-def-aux1}. The topological defects $X$ and $Y$ ``fuse'' into a new topological defect, which is $X \otimes Y$ by construction of the tensor product in the category $\CatD$ of topological defect conditions \cite[Sect.\,2.4]{DKR11}. 
Let $(X,m),(Y,n) \in \CatD_F$. In the above fusion procedure, a field $\psi^m$ inserted on the defect $X$ at a point $x$ turns into the field $\psi^{T(m,0)}$ inserted at point $x$ on $X \otimes Y$, with $T(-,-)$ as given in \eqref{eq:Tpmorph}. Analogously, a field $\psi^n$ inserted on the defect $Y$ at a point $y$ turns into the field $\psi^{T(0,n)}$ inserted at point $y$ on $X \otimes Y$. These rules are again best understood in the TFT formalism, and we refer to \cite[Sect.\,3]{MR09}. 

Using this, it is not hard to see that, if the multiple integrals implicit in the expression below all exist, the operators defined in \eqref{eq:D-eps1eps2-def} satisfy 
\begin{equation}
\lim_{\eps\to0} D_{\eps_1,\eps}^X(\psi^m;\lambda) \circ D_{\eps,\eps_2}^Y(\psi^n;\lambda)
= D^{X \otimes Y}_{\eps_1,\eps_2}(\psi^{T(m,n)};\lambda) \ .
\end{equation}
The argument for this is the same as in \cite[Thm.\,3.2]{MR09}. 
Thus, assuming that also the operators \eqref{eq:ODlam-abbrev} are well-defined for this choice of $(X,m)$ and $(Y,n)$,
\begin{equation}
  \mathcal{O}((X,m);\lambda) \circ   \mathcal{O}((Y,n);\lambda)
  =   \mathcal{O}((X \otimes Y, T(m,n));\lambda) \ .
\end{equation}
This illustrates that the tensor product in $\CatD_F$ is designed to be compatible with the composition of defect operators.

\subsection{Operators for perturbed defects and $K_0(\CatD_F)$}
\label{ssec:op+K0}

Suppose we are given an exact sequence 
\begin{equation}
0 \to (A,a) \xrightarrow{~f~} (B,b) \xrightarrow{~g~} (C,c) \to 0
\end{equation}
in $\CatD_F$. We claim that if the underlying sequence in $\CatD$ splits, the perturbed defect operators satisfy
\begin{equation} \label{eq:operator-sequence-sum}
  \mathcal{O}((B,b);\lambda) ~=~ 
  \mathcal{O}((A,a);\lambda) ~+~ 
  \mathcal{O}((C,c);\lambda)  \ ,
\end{equation}
where we assume the existence of the individual operators. In more detail, we ask that the underlying sequence $0 \to A \xrightarrow{~f~} B \xrightarrow{~g~} C \to 0$ in $\CatD$ splits, i.e.\ that we can find $\tilde f : B \to A$ and $\tilde g : C \to B$ such that $\tilde f \circ f = \id_A$, $g \circ \tilde g = \id_C$, and $f \circ \tilde f + \tilde g \circ g = \id_B$. Of course, this sequence splits automatically if $\CatD$ is semisimple, as is the case in rational CFT since then the category of bimodules in \eqref{eq:D=A-mod-A-RCFT} is semisimple. 

The reasoning behind \eqref{eq:operator-sequence-sum} is actually very simple (see \cite[Thm.\,3.2]{MR09}). We will verify it on the level of the integrands \eqref{eq:f-def-integrands}:
\begin{align}
f_{\eps_1,\eps_2;n}^B(\psi^b;x_1,\dots,x_n)
& \overset{(1)}= ~
\mathcal{O}(C_{\eps_1,\eps_2}^{B,A,B\dots,B}(z,z',x_1,\dots,x_n))(\tilde f, f, \psi^b,\dots,\psi^b)
\nonumber\\
&\hspace{5em}
+ \mathcal{O}(C_{\eps_1,\eps_2}^{B,C,B\dots,B}(z,z',x_1,\dots,x_n))(g,\tilde g, \psi^b,\dots,\psi^b)
\nonumber\\
& \overset{(2)}= ~
\mathcal{O}(C_{\eps_1,\eps_2}^{A,B,A\dots,A}(z,z',x_1,\dots,x_n))(f,\tilde f, \psi^a,\dots,\psi^a)
\nonumber\\
&\hspace{5em}
+ \mathcal{O}(C_{\eps_1,\eps_2}^{C,B,C\dots,C}(z,z',x_1,\dots,x_n))(\tilde g,g, \psi^c,\dots,\psi^c)
\nonumber\\
& \overset{(3)}= ~
f_{\eps_1,\eps_2;n}^A(\psi^a;x_1,\dots,x_n) + f_{\eps_1,\eps_2;n}^C(\psi^c;x_1,\dots,x_n) \ .
\end{align}
In step (1), $f \circ \tilde f + \tilde g \circ g = \id_B$ was used to rewrite the right hand side of \eqref{eq:f-def-integrands} as a sum of two terms. The key step is (2), where \eqref{eq:f-exchange-condition} is used to drag $f$ and $g$ around the defect circle (in opposite directions) past all the defect fields $\psi^b$. In step (3), $\tilde f \circ f = \id_A$, $g \circ \tilde g = \id_C$ is used to remove the additional insertions at $z$ and $z'$.

This shows that, at least on the semisimple part of $\CatD$ and provided the operators $\mathcal{O}(-;\lambda)$ are defined, the assignment $(X,m) \mapsto \mathcal{O}((X,m);\lambda)$ factors through $K_0(\CatD_F)$. 

\subsection{Perturbed defects as non-local conserved charges}
\label{ssec:PertDefNonlocalConsCharges}

We will now turn to the question whether the operators $\mathcal{O}((X,m);\lambda)$ can be interpreted as ``non-local conserved charges'', and if so, which Hamiltonian they are conserved charges for. The qualifier ``non-local'', incidentally, refers to the fact that defect fields are in general not local with respect to each other. 

Let us start with the perturbed Hamiltonian. Given a bulk field $\Phi \in \mathcal{F}$ and a constant $\mu$ we define
\begin{equation}
  H_\mathrm{pert}(\Phi; \mu) = H_0 + \mu \cdot H_\mathrm{int} \ ,
\end{equation}
where the new interaction is $\int_0^L \Phi(x) dx$. More specifically, in the notation of Section \ref{sec:cyl+def-field} we set
\begin{equation} \label{eq:HintDef}
  H_\mathrm{int} = \lim_{\eps_1,\eps_2 \to 0} 
  \int_0^L \mathcal{O}(C_{\eps_1,\eps_2}^{\one}(x))(\Phi) \, dx \ .
\end{equation}
Next, pick fields $\psi^+,\psi^- \in \mathcal{F}_{XX}$ such that $\psi^+$ is holomorphic in the sense that $\overline L_{-1} \psi^+ = 0$, and $\psi^-$ is anti-holomorphic, i.e.\ $L_{-1} \psi^- = 0$. Then if $\psi^+$ and $\psi^-$ are inserted at two neighbouring points $x,y$ on a defect circle, the limit $x \to y$ is regular. 

Suppose that the condition
\begin{align} \label{eq:comm-rel-defect-ops}
  &\lim_{\eps\to 0} \left( \Op\left(\raisebox{-.5\height}{\includegraphics[scale=.45]{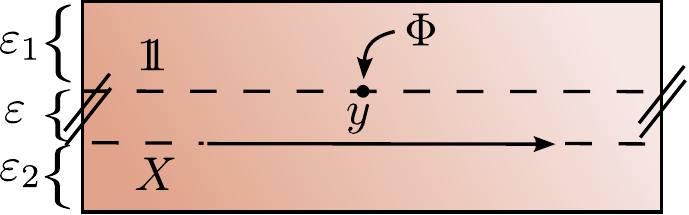}}\right) - \Op\left(\raisebox{-.5\height}{\includegraphics[scale=.45]{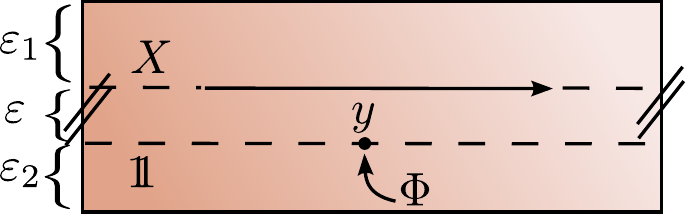}}\right) \right) \nonumber\\
  & = \lim_{\eps\to 0} \left( \Op\left( \raisebox{-.5\height}{\includegraphics[scale=.45]{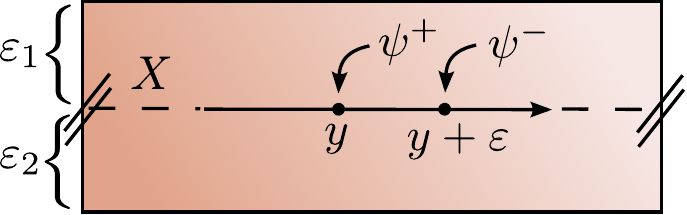}} \right) - \Op\left( \raisebox{-.5\height}{\includegraphics[scale=.45]{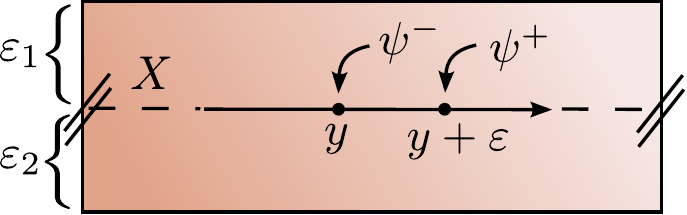}} \right) \right)
\end{align}
holds, where the omissions stand for any configuration of defect field insertions on $X$, common to all four cylinders, and we have not spelled out the arguments of $\mathcal{O}(\cdots)$. In formulas, the above identity reads
\begin{align}
\lim_{\eps\to 0} \Big(&
\mathcal{O}(C^{\one,X}_{\eps_1,\eps,\eps_2}(y;x_1,\dots,x_n))(\Phi;\psi_1,\dots,\psi_n)
\nonumber\\
&\qquad -  \mathcal{O}(C^{X,\one}_{\eps_1,\eps,\eps_2}(x_1,\dots,x_n;y))(\psi_1,\dots,\psi_n;\Phi) 
\Big)
\nonumber\\
=~\lim_{\eps\to 0} \Big(&
\mathcal{O}(C^{X}_{\eps_1,\eps_2}(x_1,\dots,y,y+\eps,\dots,x_n))(\psi_1,\dots,\psi^+,\psi^-,\dots,\psi_n)
\nonumber\\
&\qquad  - \mathcal{O}(C^{X}_{\eps_1,\eps_2}(x_1,\dots,y,y+\eps,\dots,x_n))(\psi_1,\dots,\psi^-,\psi^+,\dots,\psi_n) \Big)\ .
\end{align}
Here, $\psi_i \in \mathcal{F}_{XX}$ and $x_k<y<y+\eps<x_{k+1}$ for some $k$. We claim that this condition implies
\begin{equation} \label{eq:commutator-Hpert-O}
  \big[ \, H_\mathrm{pert}(\Phi; -2i\lambda^2) \, , \,  \mathcal{O}(X,\psi^+ + \psi^-;\lambda) \, \big] ~=~ 0 \ .
\end{equation}
In other words, $\mathcal{O}(X,\psi^++\psi^-;\lambda)$ is a conserved charge for a perturbed Hamiltonian. Note that this includes the special cases $\psi^+=0$ or $\psi^-=0$, where \eqref{eq:comm-rel-defect-ops} holds for $\Phi=0$. Thus $\mathcal{O}(X,\psi^+;\lambda)$ and $\mathcal{O}(X,\psi^-;\lambda)$ are conserved charges for the unperturbed Hamiltonian $H_0$. The argument leading to \eqref{eq:commutator-Hpert-O} is the same as given in \cite[Sect.\,3.4]{Ru10}, but as \cite{Ru10} is formulated for Virasoro minimal models and assumes that the perturbing fields have negative weight, in Appendix \ref{app:comm-der} we give the argument in the present setting without these restrictions.

\medskip

Finally, we would like a formulation of the commutation condition \eqref{eq:comm-rel-defect-ops} in terms of the category $\CatD_F$. To this end, we once more restrict our attention to rational CFT and the setting in Remark \ref{rem:top-def-in-RCFT}. 

Pick two representations $F^+_\Cat, F^-_\Cat \in \Cat \equiv \mathrm{Rep}(\mathcal{V})$, 
and pick vectors 
$\hat\psi_\pm \in F^\pm_\Cat$. Since the space of bulk fields is $\mathcal{F} \equiv \mathcal{F}_{\one\one}$, the isomorphism \eqref{eq:Hom_VV-iso-Hom_AA} becomes
\begin{equation} \label{eq:Hom_VV-iso-Hom_AA_bulk}
  \Hom_{\mathcal{V} \otimes_{\mathbb{C}} \mathcal{V}}(F^+_\Cat \otimes_{\mathbb{C}} F^-_\Cat,\mathcal{F})
  \cong \Hom_{A|A}(\alpha^+(F^+_\Cat) \otimes_A \alpha^-(F^-_\Cat) , A) \ ,
\end{equation}
where $\alpha^\pm = \alpha \circ \iota^\pm$ was introduced in Remark \ref{rem:alpha-frs-relation}. Writing $F^\pm \equiv \alpha^\pm(F^\pm_\Cat)$, we see that a morphism $b : F^+ \otimes F^- \to \one$ in $\CatD$ 
(rather than in $\DZ(\CatD)$ -- the forgetful functor is implicit)
defines a bulk field
\begin{equation}\label{eq:perturbing-bulk-field}
\Phi = (\hat\psi^+\otimes_{\mathbb{C}}\hat\psi^-)^b ~ \in ~ \mathcal{F}
\end{equation}
via the prescription after \eqref{eq:rel-DF-aux1}. For the defect perturbation we take $F = F^+ \oplus F^-$ and pick an object $(X,m) \in \CatD_F$. Let $\iota_{F^\pm} : F^\pm \to F$ be the canonical embeddings and set $m_{F^{\pm}} = m \circ (\iota_{F^\pm} \otimes \id_X)$. Then the triple
$\psi^+ := (\hat\psi^+)^{m_{F^+}}$, $\psi^- := (\hat\psi^-)^{m_{F^-}}$ and $\Phi$ satisfies condition \eqref{eq:comm-rel-defect-ops} if and only if
\begin{equation} \label{eq:commutation-relation-pics}
  \raisebox{-0.5\height}{\includegraphics[height=.11\textwidth]{img/eq_cc_1.pdf}} 
  \quad - 
  \raisebox{-0.5\height}{\includegraphics[height=.11\textwidth]{img/eq_cc_2.pdf}} 
  \quad = \ 
  \raisebox{-0.5\height}{\includegraphics[height=.11\textwidth]{img/eq_cc_3.pdf}} 
  \quad - \raisebox{-0.5\height}{\includegraphics[height=.11\textwidth]{img/eq_cc_4.pdf}} \quad .
\end{equation}
To find this condition one needs to employ the TFT formulation and adapt the argument given in \cite[Sect.\,4.3]{Ru10}; we omit the details.

\bigskip

This completes our overview of the application of the category $\CatD_F$ to the analysis of conserved charges of a perturbed conformal field theory. As already mentioned, this application is the main motivation for introducing $\CatD_F$. Clearly, the identity \eqref{eq:comm-rel-defect-ops} -- or its reformulation in \eqref{eq:commutation-relation-pics} -- is crucial for this application, as it guarantees that the operator $\mathcal{O}((X,m);\lambda)$ commutes with the perturbed Hamiltonian. In the next section we take a closer look at this condition.

\section{The commutation condition and its relation to Yetter-Drinfeld modules}
\label{sec:comm_cond_YD}

As in Section \ref{sec:tensor_algebras},
let $(\CatD,\ot,\alpha,\one,\lambda,\rho)$ be an abelian monoidal category. 
The following definition is motivated by \eqref{eq:commutation-relation-pics}. 

\begin{df} \label{df:commcond}
	Let $\Fc,\Fa \in \DZ(\CatD)$ and let
	$\bF: \Fc \ot \Fa \to \one$ be a morphism in $\CatD$. 
	An object $(X,m) \in \CatD_{\Fc \oplus \Fa}$ is said to \emph{satisfy the commutation condition  with $\bF$}
	if 
	\begin{align} \label{eq:compatible}
		&m_\Fc \circ (\id_{\Fc} \ot m_\Fa) \circ \alpha_{\Fc,\Fa,X} \nonumber\\ 
		& \hspace{2em} -~ m_\Fa \circ (\id_{\Fa} \ot m_\Fc) \circ 
				\alpha_{\Fc,\Fa,X} \circ (\varphi_{\Fc,\Fa} \ot \id_X) \nonumber\\
	 	&=~ \bF \ot \id_X ~-~ (\id_X \ot \bF) \circ \alpha_{X,\Fc,\Fa} 
	 			\circ \varphi_{\Fc \ot \Fa,X} \ ,
	\end{align}
where $m_\Fca := m \circ (\iota_\Fca \ot \id_X)$ and $\iota_\Fca : \Fca \to \Fc \oplus \Fa$ are the canonical embeddings.
\end{df}

The pictorial representation of condition  \eqref{eq:compatible} was already given in \eqref{eq:commutation-relation-pics}. The right hand side of \eqref{eq:compatible} is in general nonzero since $\bF$ is not required to be a morphism in $\DZ(\CatD)$. 

\medskip

We say that two objects $(V,\varphi_{V,-})$ and $(W,\varphi_{W,-})$ in $\DZ(\CatD)$ are {\em mutually transparent} if their half-braidings satisfy $\varphi_{W,V} \circ \varphi_{V,W} = \id_{V \otimes W}$.
If $F^\pm$ are mutually transparent, the commutation condition is compatible with the tensor product on $\CatD_F$:

\begin{prop} \label{prop:Delta}
	Suppose that $\Fc,\Fa \in \DZ(\CatD)$ are 
	mutually transparent. 
	If $(X,m)$, $(Y,n)$ $\in$ $\CatD_{\Fc \oplus \Fa}$ satisfy the commutation condition  with 
	$\bF: \Fc \ot \Fa \to \one$, then so does $(X,m) \ot (Y,n)$. 
\end{prop}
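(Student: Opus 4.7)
The plan is to expand the commutation condition for $(X \otimes Y, T(m,n))$ using the formula \eqref{eq:Tpmorph} into a sum of eight terms, and identify how each group is controlled by the individual commutation conditions and by mutual transparency. Write $T^\pm := T(m,n)_{F^\pm}$; each $T^\pm$ is a sum of a ``local'' piece $m_{F^\pm}\otimes\id_Y$ and a ``transported'' piece $(\id_X \otimes n_{F^\pm})\circ\alpha\circ(\varphi_{F^\pm,X}\otimes\id_Y)$. Substituting produces four terms $A_1,\dots,A_4$ from $T^+\circ(\id_{F^+}\otimes T^-)\circ\alpha$ (with $A_1$ local--local, $A_4$ transported--transported, and $A_2, A_3$ the two mixed cases) and four negative terms $B_1,\dots,B_4$ from $T^-\circ(\id_{F^-}\otimes T^+)\circ\alpha\circ(\varphi_{F^+,F^-}\otimes\id)$. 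I would group the eight terms as $(A_1-B_1) + (A_4-B_4) + (A_2-B_3) + (A_3-B_2)$.

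The pair $A_1-B_1$ involves only $m_{F^\pm}$ on $X$ with a trailing $\id_Y$, so it equals the LHS of the commutation condition for $(X,m)$ tensored with $\id_Y$; by hypothesis this gives $b\otimes\id_{X\otimes Y} - (\id_X\otimes b\otimes\id_Y)\circ(\varphi_{F^+\otimes F^-,X}\otimes\id_Y)$. For $A_4-B_4$, bifunctoriality of $\otimes$ lets me collect the two half-braidings $\varphi_{F^\pm,X}$ and the two actions $n_{F^\pm}$ onto separate tensor factors. The half-braiding axiom $\varphi_{V\otimes W,X}=(\varphi_{V,X}\otimes\id_W)\circ(\id_V\otimes\varphi_{W,X})$ then turns the collected half-braidings into $\varphi_{F^+\otimes F^-,X}\otimes\id_Y$ in $A_4$ and into $\varphi_{F^-\otimes F^+,X}\otimes\id_Y$ composed with $\varphi_{F^+,F^-}\otimes\id$ in $B_4$. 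Since $\varphi_{F^+,F^-}$ is by definition a morphism in $\DZ(\CatD)$, naturality of the half-braiding in its first slot moves it to the left, leaving $\varphi_{F^+\otimes F^-,X}\otimes\id_Y$ as a common right factor and the $(Y,n)$-commutation expression inside. Invoking the hypothesis on $(Y,n)$ and the second-slot axiom $\varphi_{F^+\otimes F^-,X\otimes Y}=(\id_X\otimes\varphi_{F^+\otimes F^-,Y})\circ(\varphi_{F^+\otimes F^-,X}\otimes\id_Y)$ rewrites $A_4-B_4$ as $(\id_X\otimes b\otimes\id_Y)\circ(\varphi_{F^+\otimes F^-,X}\otimes\id_Y) - (\id_{X\otimes Y}\otimes b)\circ\varphi_{F^+\otimes F^-,X\otimes Y}$. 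Adding to $A_1-B_1$, the cross terms cancel, leaving exactly the RHS of the commutation condition for $(X\otimes Y,T(m,n))$.

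The remaining differences $A_2-B_3$ and $A_3-B_2$ must vanish. In each of these four terms, one of $m_{F^\pm}$ acts on $X$ and the opposite $n_{F^\mp}$ acts on $Y$ after transport. Naturality of $\varphi_{F^\mp,-}$ in its second slot, applied to $m_{F^\pm}\colon F^\pm\otimes X\to X$ (used only as a morphism in $\CatD$), combined with the second-slot decomposition of $\varphi_{F^\mp,F^\pm\otimes X}$, introduces into $A_3$ a factor $\varphi_{F^+,F^-}$ that matches exactly the $\varphi_{F^+,F^-}$ already present in $B_2$ (from the leading braiding of the $B$ block), giving $A_3=B_2$ directly. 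The same procedure introduces into $B_3$ a factor $\varphi_{F^-,F^+}$ which composes with the leading $\varphi_{F^+,F^-}$ to give $\varphi_{F^-,F^+}\circ\varphi_{F^+,F^-}$; this collapses to the identity precisely by the mutual transparency assumption, giving $A_2=B_3$. Thus both mixed differences vanish and, combined with the previous step, the LHS of the commutation condition for $(X\otimes Y, T(m,n))$ equals its RHS, as required.

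The main obstacle I anticipate is the associator and bifunctoriality bookkeeping in the $A_4-B_4$ step, where one interleaves several reshufflings with both half-braiding axioms and naturality along $\varphi_{F^+,F^-}$ in the first slot; in particular one must avoid silently swapping $\varphi_{F^+,F^-}$ for $\varphi_{F^-,F^+}$. Once this is in place, the rest of the argument is routine substitution.
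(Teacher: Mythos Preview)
Your proposal is correct and follows essentially the same approach as the paper: both expand the left-hand side into eight terms, observe that one mixed pair ($A_3$ and $B_2$ in your notation) cancels via naturality alone while the other ($A_2$ and $B_3$) cancels only after invoking mutual transparency, and combine the four remaining ``pure'' terms into the right-hand side using the hypotheses on $(X,m)$ and $(Y,n)$. The sole organisational difference is that the paper packages the computation as $\delta(T(m,n)) = T(\delta(m),\delta(n))$ followed by $T(r_b(X),r_b(Y)) = r_b(X\otimes Y)$, whereas you apply the hypotheses directly inside the grouped pairs; this is the same argument written in a different order.
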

\begin{proof}
 Denote the left hand side of \eqref{eq:compatible} by $\delta(m)$ 
 (respectively $\delta(n)$ for the object $(Y,n)$). 
 With $T(m,n)$ as in \eqref{eq:Tpmorph} we have 
 \begin{align}
 	\delta(T(m,n)) &= ~~\phantom{-}
 	\raisebox{-.5\height}{\includegraphics[scale=0.50]{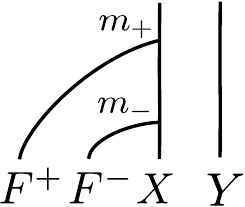}} 
 	+ \raisebox{-.5\height}{\includegraphics[scale=0.50]{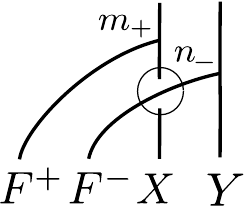}} 
 	\boxed{+ \raisebox{-.5\height}{\includegraphics[scale=0.50]{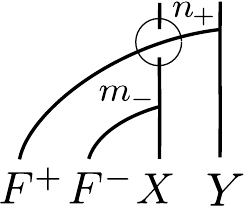}} } 
 	+ \raisebox{-.5\height}{\includegraphics[scale=0.50]{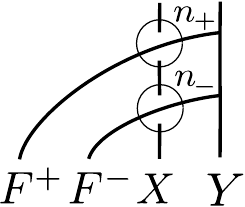}} \nonumber\\
 	&\phantom{=}~~ - \raisebox{-.5\height}{\includegraphics[scale=0.50]{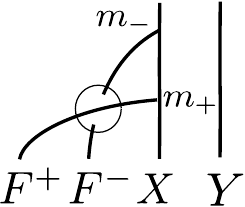}} 
 	- \raisebox{-.5\height}{\includegraphics[scale=0.50]{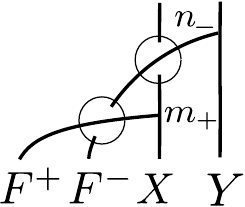}} 
 	\underbrace{\boxed{- \raisebox{-.5\height}{\includegraphics[scale=0.50]{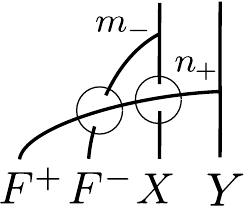}} }}_{=0} 
 	- \raisebox{-.5\height}{\includegraphics[scale=0.50]{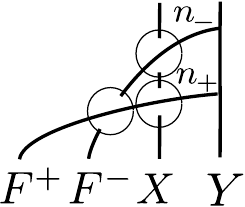}} \nonumber\\[1em]
 	&=~  T(\delta(m),\delta(n)) 
 	+ \underbrace{\raisebox{-.5\height}{\includegraphics[scale=0.50]{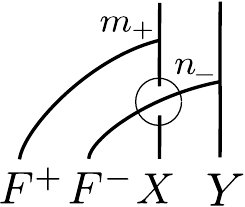}} 
 	- \raisebox{-.5\height}{\includegraphics[scale=0.50]{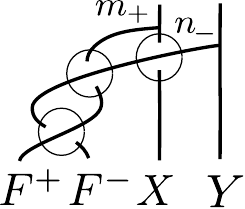}}}_{=0} \quad . 
 \end{align}
That the difference of the last two terms is zero follows from the assumption that $F^+$ and $F^-$ are mutually transparent.
Denote the right hand side of \eqref{eq:compatible} by $r_\bF(X)$. By assumption, $\delta(m)=r_\bF(X)$ and $\delta(n)=r_\bF(Y)$. 
	The proof is thus completed by observing that 
	\begin{align}
		\tpmorph(r_\bF(X),r_\bF(Y)) &= 
 		\raisebox{-0.5\height}{\includegraphics[scale=0.450]{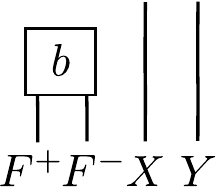}} \
 		\underbrace{- \raisebox{-0.5\height}{\includegraphics[scale=0.450]{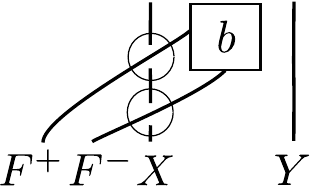}} \
 		+ \raisebox{-0.5\height}{\includegraphics[scale=0.450]{img/eqTPcommcond_2.pdf}}}_{=0} \
 		- \raisebox{-0.5\height}{\includegraphics[scale=0.450]{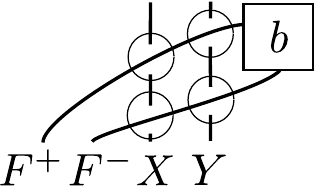}} \nonumber\\
 		&= r_\bF(X \ot Y) \ .
	\end{align}
\end{proof}

\begin{rem} \label{rem:comments-on-comm-cond}
\begin{enumerate}[(i)]
\item 
If $\CatD$ is itself braided, there is a generic class of solutions to the requirement that  $\Fc$ and $\Fa$ must be mutually transparent, namely, $\Fc = \iota^+(\tFc)$ and $\Fa = \iota^-(\tFa)$ for some $\tFc$, $\tFa$ in $\CatD$, where $\iota^\pm$ are as in \eqref{eq:Functors=i+,i-}. More generally, in the setting discussed in Section \ref{ssec:pert-def-and-cat} and Table \ref{tab:F-notation},
 if $F^\pm_{\Cat} \in \Cat$ 
 and $\aind^\pm$ are the functors as introduced after 
 Proposition \ref{prop:alphaind}, 
 then $F^+ = \aind^+(F^+_{\Cat})$ and $F^- = \aind^-(F^-_{\Cat})$ are mutually transparent.
\item 
 Let $g_1: \Fc \to \Fc$, $g_2: \Fa \to \Fa$ be morphisms in $\CatD$ and 
 assume that $(X,m) \in \CatD_{\Fc \oplus \Fa}$ satisfies the commutation condition  with 
 $\bF: \Fc \ot \Fa \to \one$. Then $(X,m \circ (g_1 \oplus g_2))$ 
 satisfies the commutation condition  with $\bF \circ (g_1 \ot g_2)$. 
 In particular, if $(X,m)$ satisfies the commutation condition  with $\bF$, then so does $(X,-m)$. 
 Suppose that $\CatD$ is $\C$-linear, as is the case in the examples we study in 
 Sections \ref{sec:uncompboson} and \ref{sec:compboson} below. 
 In this case, for each $(X,m)$ which satisfies the commutation condition  with $\bF$, the condition also holds for the family $\big(X,m \circ \{(w\cdot\id) \oplus (w^{-1}\cdot\id)\}\big)$, where $w \in \C^\times$. 
\end{enumerate}
\end{rem}

We are now going to relate the commutation condition to 
Yetter-Drinfeld modules over tensor algebras.
As Yetter-Drinfeld modules are defined in braided categories, we restrict 
ourselves to the case that $\CatD$ is braided and comment briefly on the general case in 
Remark \ref{rem:D-not-braided} below.

Specifically, for $\tilde F^\pm \in \CatD$ the commutation condition in $\CatD_F$ with $F = \iota^+(\tilde F^+) \oplus \iota^-(\tilde F^-)$ will turn out to be equivalent to the Yetter-Drinfeld condition for $\talg(\tilde F^\pm)$.
The Hopf algebras $\talg(\tilde F^\pm)$ are understood to both satisfy the bialgebra axiom (see Figure \ref{fig:Alg}\,b in Appendix \ref{appx:alg-etc-in-braided-cats}) with respect to the braiding in $\CatD$. In order to avoid lots of tildes, up to Remark \ref{rem:D-not-braided} we will depart from our previous notation and write $F^\pm$ instead of $\tilde F^\pm$. That is, $F^\pm$ are objects in $\CatD$ instead of $\DZ(\CatD)$.

\medskip

\begin{figure} 
 \centering
 \begin{align*}
 \begin{array}{cccc}
 	\text{(a)} 
 	& \raisebox{-.5\height}{$ \rho^{*n} := \raisebox{-0.5\height}{\includegraphics[scale=0.50]{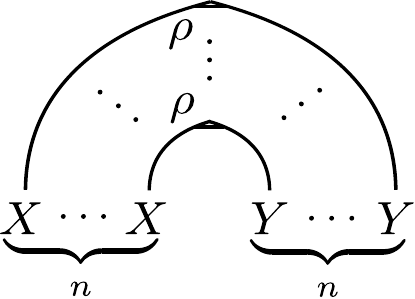}} $} \hspace{5em}
 	\text{(b)}
 	& \raisebox{-.5\height}{\raisebox{-0.5\height}{\includegraphics[scale=0.50]{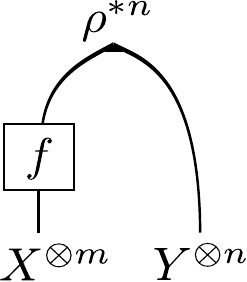}} 
 		= \raisebox{-0.5\height}{\includegraphics[scale=0.50]{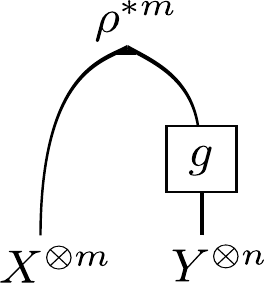}}}
 \end{array} 
 \end{align*}
 \caption{(a) Definition of $\rho^{*n}$ for a morphism $\rho : X \ot Y \to \one$. (b) Identity fulfilled by $\rho$-dual morphisms.} \label{fig:Duality}
\end{figure}

Given objects $X,Y$ in a monoidal category and a morphism $\rho : X \ot Y \to \one$, we define $\rho^{*n}$ as in Figure \ref{fig:Duality}\,(a) (cf.\ \cite[Sect.\,2.4]{Be95}). It is understood that $\rho^{*0} : \one \otimes \one \to \one$ is the unit isomorphism.
Two morphisms $f: X^{\ot m} \to X^{\ot n}$ and $g: Y^{\ot n} \to Y^{\ot m}$ are said to be $\rho$-{\em dual}, 
if the identity in Figure \ref{fig:Duality}\,(b) holds. 

\begin{df} (cf.\ \cite[Sect.\,2.4\,\&\,Def.\,3.1.2]{Be95})\footnote{ 
	We call a \emph{Hopf pairing} what is 
	called \emph{bialgebra pairing} in \cite{Be95}, since the natural compatibility 
	condition with the antipode is a consequence; 
	Yetter-Drinfeld modules -- originally called ``crossed bimodules'' in 
	\cite{Yet90} -- usually carry a module- and a comodule-structure. 
	We take the fact that the category of ``generalised Yetter-Drinfeld modules'', 
	carrying two module-structures, is denoted by $\mathcal{DY}$ in \cite{Be95} 
	as a justification to call its objects Yetter-Drinfeld modules.}
 Let $H_1,H_2$ be Hopf algebras in a braided category. 
 \begin{enumerate}[(i)]
  \item A \emph{Hopf pairing} for $H_1,H_2$ is a morphism $\rho: H_1 \ot H_2 \to \one$ 
 	such that the (co)algebra structures on $H_1$ and $H_2$ are $\rho$-dual in the sense that
 	\begin{gather} \label{eq:Hopfpairing_a}
  	\raisebox{-0.5\height}{\includegraphics[scale=0.50]{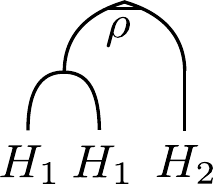}}
  	\ = \raisebox{-0.42\height}{\includegraphics[scale=0.50]{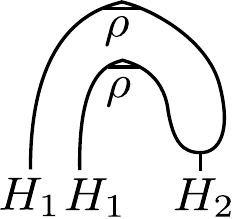}} 
	\quad ,  	\qquad 
  	\raisebox{-0.5\height}{\includegraphics[scale=0.50]{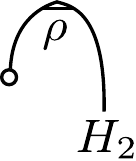}}
  	= \raisebox{-0.57\height}{\includegraphics[scale=0.50]{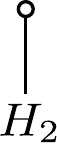}}
  	\end{gather}
  	and
 	\begin{gather} \label{eq:Hopfpairing_b}
  	\raisebox{-0.5\height}{\includegraphics[scale=0.50]{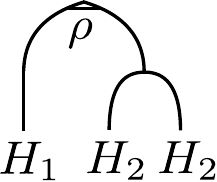}}
  	= \ \raisebox{-0.42\height}{\includegraphics[scale=0.50]{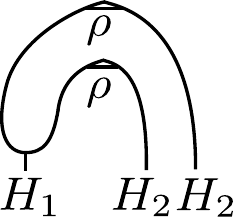}} 
	\quad ,  	\qquad 
  	\raisebox{-0.5\height}{\includegraphics[scale=0.50]{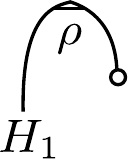}}
  	\ = \raisebox{-0.57\height}{\includegraphics[scale=0.50]{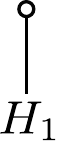}}
  	\end{gather}
 	hold. 
  \item Let $\rho: H_1 \ot H_2 \to \one$ be a Hopf pairing. 
 	A \emph{Yetter-Drinfeld module} $(X,m_1,m_2)$ for $H_1,H_2,\rho$ is an 
 	$H_1$-left module $(X,m_1)$ and an $H_2$-right module $(X,m_2)$ satisfying the 
 	Yetter-Drinfeld condition: 
 	\begin{gather} \label{eq:YD-cond}
  		\raisebox{-0.5\height}{\includegraphics[scale=0.50]{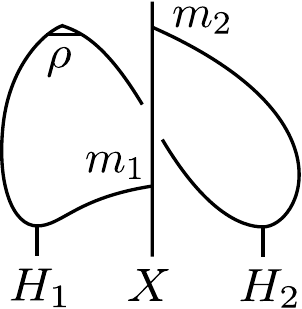}} 
  		\quad = \quad \raisebox{-0.5\height}{\includegraphics[scale=0.50]{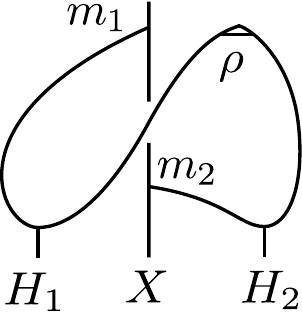}} \quad .
 	\end{gather}
 	Denote the category of Yetter-Drinfeld modules for $H_1,H_2,\rho$ by 
 	$H_1\YD{\rho} H_2$. 
 \end{enumerate}
\end{df}

\begin{ex} \label{ex:YD-modules}
	Suppose that $H_1$ and $H_2$ have invertible antipodes and let $X$ be an object. 
	Then 
	$(X \ot H_1,m_1,m_2)$ is a Yetter-Drinfeld module for 
	\begin{align} \label{eq:YD-mod-struc-H1--a}
	m_1 = \ \raisebox{-0.5\height}{\includegraphics[scale=0.50]{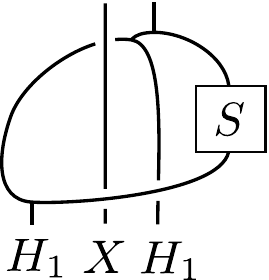}} \quad , 
	\qquad 
	m_2 = \ \raisebox{-0.5\height}{\includegraphics[scale=0.50]{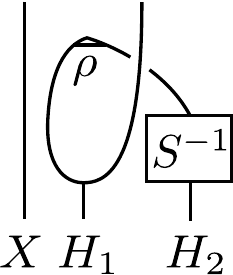}} \ . 
	\end{align} 
	Similarly, $(H_1 \ot X,\tilde m_1,\tilde m_2)$ is a Yetter-Drinfeld module with
	\begin{align} \label{eq:YD-mod-struc-H1--b}
	\tilde m_1 = \ \raisebox{-0.5\height}{\includegraphics[scale=0.50]{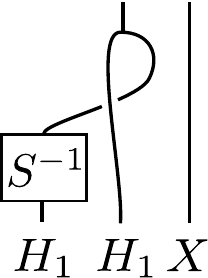}} \quad , 
	\qquad 
	\tilde m_2 = \ \raisebox{-0.5\height}{\includegraphics[scale=0.50]{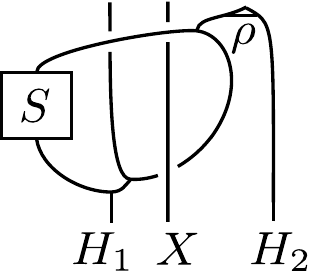}} \ . 
	\end{align} 
	The proof that $(X \ot H_1,m_1,m_2)$ satisfies the Yetter-Drinfeld 
	condition is given in \cite[Prop.\,B.4 \& Fig.\,2]{Ma95b} (C.4 in arXiv version)
	in the case $X = \one$. 
	The proof for $X \neq \one$ and the proof that $H_1 \ot X$ is a Yetter-Drinfeld module are analogous.
	A related kind of modules is described 
	in \cite[Sect.\,3.3]{SemTip11}. 
\end{ex}

\begin{lem}[\cite{Schau96}] \label{lem:extHopf}
	Let $\CatD$ be braided and suppose that Condition \ref{cond:councop} 
	is fulfilled. 
	Let $\Fc,\Fa \in \CatD$
	and let $\bF: \Fc \ot \Fa \to \one$ 
	be a morphism. 
	Then there is a unique 
	Hopf pairing $\rho(b): \talg(\Fc) \ot \talg(\Fa) \to \one$ satisfying
	$\rho(b) \circ (\iota_{\Fc} \ot \iota_{\Fa}) = \bF$.
\end{lem}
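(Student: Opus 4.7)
Write $\rho_{n,m}$ for the restriction of a candidate $\rho$ to $(\Fc)^{\otimes n}\otimes(\Fa)^{\otimes m}\to\one$. The plan is to argue uniqueness by a degree analysis and then use the resulting recursion as a definition for existence.

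For uniqueness, degree preservation forces $\rho_{n,m}=0$ whenever $n\ne m$. The counit axioms \eqref{eq:Hopfpairing_b} together with $\varepsilon|_{\Fc}=\varepsilon|_{\Fa}=0$ pin down $\rho_{0,0}$ as the unit isomorphism and force $\rho_{n,0}=\rho_{0,n}=0$ for $n\ge 1$. For $n\ge 2$, apply the first Hopf-pairing axiom of \eqref{eq:Hopfpairing_a} with the first factor split as $\iota_1\ot\iota_{n-1}:F^+\ot(F^+)^{\ot(n-1)}\to\talg(F^+)$ and the second factor in $(F^-)^{\ot n}$. Since $\rho_{1,m}$ vanishes for $m\ne 1$, only the $(1,n-1)$-graded component of $\Delta_{\talg(\Fa)}$ survives on the right. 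On the generator $F^-$ the coproduct is primitive, and by induction one checks that the $(1,n-1)$-graded component of $\Delta$ on $(F^-)^{\ot n}$ is expressed purely in terms of the braiding of $\CatD$. This yields an explicit recursion
\begin{align*}
  \rho_n = \sum_{i=1}^n (\text{braided insertion})\circ(b\ot\rho_{n-1})
\end{align*}
(with the braiding moving the $i$-th factor of $(F^-)^{\ot n}$ into position), which together with $\rho_1=b$ and $\rho_0=\lambda_\one$ determines $\rho$ uniquely.

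For existence, take the recursion above as a definition of $\rho_n$, and extend by $0$ off the diagonal; this assembles into a morphism $\rho(b):\talg(\Fc)\ot\talg(\Fa)\to\one$ using Condition \ref{cond:councop}. The first axiom of \eqref{eq:Hopfpairing_a} is then built into the construction on generators of $\talg(\Fc)$ on the left, and extends to products of arbitrary degree by a straightforward induction on the left degree using associativity of $\mu_{\talg(\Fc)}$ and coassociativity of $\Delta_{\talg(\Fa)}$. The counit compatibilities \eqref{eq:Hopfpairing_b} are immediate: both sides vanish unless degrees match, and on matching degree $0$ they agree by the choice of $\rho_0$. The compatibility with the antipode is automatic once $\rho(b)$ is a bialgebra pairing, because the antipode of a Hopf algebra is unique and the two morphisms $\rho(b)\circ(S\ot\id)$ and $\rho(b)\circ(\id\ot S)$ are both convolution-inverses of $\rho(b)$.

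The main obstacle is verifying the second pair of axioms in \eqref{eq:Hopfpairing_a}, i.e.\ that the same $\rho$ is also compatible with the product on $\talg(\Fa)$ and the coproduct on $\talg(\Fc)$. Concretely, one must show that the recursion extending $b$ via the right argument agrees with the one extending $b$ via the left argument. The natural way is a simultaneous induction on $n$ using both bialgebra axioms of $\talg(\Fc)$ and $\talg(\Fa)$: at each step one moves a generator $x\in F^+$ past a generator $y\in F^-$ by rewriting products as coproducts via the braided bialgebra axiom (Figure \ref{fig:Alg}\,b) and collapsing the resulting expression through primitivity of the generators and the already-known lower-degree cases of the other axiom. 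This consistency check — essentially that $\talg(\Fc)$ and $\talg(\Fa)$ are dual braided bialgebras via $\rho(b)$ — is the core content of \cite{Schau96}.
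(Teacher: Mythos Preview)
Your proposal is correct in outline and arrives at the same formula, but the paper takes a more conceptual route that sidesteps the part you flag as the ``main obstacle''. Rather than defining $\rho_n$ by a recursion and then verifying the second half of \eqref{eq:Hopfpairing_a} by a bare-hands simultaneous induction, the paper introduces the \emph{tensor coalgebra} $C(V)$ (same underlying object as $\talg(V)$ but with deconcatenation coproduct) and the unique bialgebra map $\Sym:\talg(V)\to C(V)$ given by the braid symmetrisers $\mathbf{S}_n$. The auxiliary pairing $\tilde\rho(b)=\sum_n b^{*n}\circ(\pi_n\ot\pi_n)$ manifestly satisfies \eqref{eq:Hopfpairing_a} for $\talg(\Fc)$ against $C(\Fa)$ and \eqref{eq:Hopfpairing_b} for $C(\Fc)$ against $\talg(\Fa)$. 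Setting $\rho(b)=\tilde\rho(b)\circ(\Sym\ot\id)$, a short braid-group identity shows $\rho(b)=\tilde\rho(b)\circ(\id\ot\Sym)$ as well, which delivers \emph{both} halves of \eqref{eq:Hopfpairing_a} at once. Uniqueness is then a one-line consequence of $\Delta_{1,\dots,1}=\mathbf{S}_n$.

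What each approach buys: your recursive argument is self-contained and makes the determining role of $b$ very explicit, but the consistency check you defer to \cite{Schau96} is exactly where the symmetriser identity lives --- your induction, once unwound, would reprove it. The paper's route packages that identity once and for all via the universal properties of $\talg$ and $C$, and as a bonus gives the closed form $\rho_n=b^{*n}\circ(\mathbf{S}_n\ot\id)=b^{*n}\circ(\id\ot\mathbf{S}_n)$ directly, which is useful later (e.g.\ in Remark~\ref{rem:Lusztig-f} and Section~\ref{ssec:comp-TQ_cycl}).
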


The Hopf pairing $\rho(b)$ is a generalisation of 
Lusztig's pairing, see \cite[Prop.\,1.2.3]{Lus94}.
\begin{proof}
{\em Existence:} 
	Note that the tensor algebra $\talg(V)$ of an object $V \in \CatD$ is ($\N$-)graded as a bialgebra. Moreover, it is the 
	free graded bialgebra over $V$ in the following sense: given a graded bialgebra 
	$A \cong \bigoplus_{n \in \N} A_n$ in $\CatD$ such that $A_0 = \one$ and a morphism 
	$f|_V: V \to A_1$, there exists a unique map of graded bialgebras 
	$f: \talg(V) \to A$ with $f \circ \iota_1 = f|_V$ 
	(cf.\ the comment above Def.\,2.5 in \cite{Schau96}). 
	Here, $\iota_n, \pi_n$ are the canonical 
	embeddings and projections for $\bigoplus_{n \in \N} V^{\ot n}$. 
	
	The \emph{tensor coalgebra} 
	$C(V)$ of $V$ is equal to 
	$\talg(V) = \bigoplus_{n \in \N} V^{\ot n}$ as an object in $\CatD$, 
	but its coproduct is the deconcatenation coproduct $\Delta: C(V) \to C(V) \ot C(V)$, 
	$\Delta = \sum_{n \in \N} \sum_{k=0}^n (\iota_k \ot \iota_{n-k}) \circ \pi_n$, 
	and its counit is $\varepsilon = \pi_0$. 
	The tensor coalgebra has a unique algebra structure with unit $\iota_0: \one \to C(V)$ 
	turning it into a graded bialgebra. This is obtained from the universal property of the tensor coalgebra, see \cite[Cor.\,2.4]{Schau96}. 
	Hence, there is a unique map of bialgebras $\talg(V) \to C(V)$ sending 
	$V$ to $V$. It is given by 
	$\Sym = \sum_{n\in\N} \iota_n \circ \mathbf{S}_n \circ \pi_n$ \cite[Thm.\,2.9]{Schau96}, 
	where the $\mathbf{S}_n$'s are the braid group symmetrisers as in \cite[Def.\,2.6]{Schau96}. 
	
	Define 
	$\tilde\rho(b): \left( \bigoplus_{n \in \N} (\Fc)^{\ot n} \right) 
		\ot \left( \bigoplus_{n \in \N} (\Fa)^{\ot n} \right) \to \one$ 
	by $\tilde\rho(b) := \sum_{n \in \N} b^{*n} \circ (\pi_n \ot \pi_n)$. 
	Observe that $\tilde\rho(b)$ satisfies \eqref{eq:Hopfpairing_a} for 
	$H_1 = \talg(\Fc)$ and $H_2 = C(\Fa)$, 
	i.e.\ $\tilde\rho(b) \circ (\mu_{\talg(\Fc)}\ot \id) = 
		\tilde\rho(b) \circ (\id \ot \tilde\rho(b) \ot \id) 
		\circ (\id \ot \id \ot \Delta_{C(\Fa)})$ 
	and the compatibility condition with the counit also holds. 
	Similarly, $\tilde\rho(b)$ satisfies \eqref{eq:Hopfpairing_b} for 
	$H_1 = C(\Fc)$ and $H_2 = \talg(\Fa)$. 
	Define $\rho(b)$ as 
	\begin{align}
		\rho(b) := \sum_{n \in \N} b^{*n} \circ (\mathbf{S}_n \ot \id)\circ(\pi_n \ot \pi_n) \ . 
	\end{align}
	Let $S_n = \text{Aut}(\{1,\ldots,n\})$ be the $n$-th symmetric group, $B_n$ the $n$-th Artin 
	braid group, let $L: S_n \to B_n$ be the Matsumoto section 
	(see \cite{Schau96}, above Def.\,2.6) and 
	$\sigma_n$ be the canonical map $B_n \to \text{Aut}(V^{\ot n})$ 
	induced by the braiding. One can convince oneself that 
	$b^{*n} \circ (\sigma_n(L(f)) \ot \id) 
		= b^{*n} \circ (\id \ot \sigma_n(L(r \circ f \circ r)))$ 
	with $r \in S_n$ being the ``reflection map'' sending $k$ to $n+1-k$. 
	As $\mathbf{S}_n = \sum_{f \in S_n} \sigma_n(L(f))$, this implies 
	$\rho(b) = \sum_{n \in \N} b^{*n} \circ (\id \ot \mathbf{S}_n)\circ(\pi_n \ot \pi_n)$. 
	It follows that $\rho(b)$ is a Hopf pairing $\talg(\Fc) \ot \talg(\Fa) \to \one$, 
	and by its definition it satisfies $\rho(b) \circ (\iota_{\Fc} \ot \iota_{\Fa}) = \bF$. 

\medskip\noindent
{\em Uniqueness:} Let $\rho': \talg(\Fc) \ot \talg(\Fa) \to \one$ be 
	a Hopf pairing. 

	We first show that $\rho'$ preserves the tensor degree, i.e. 
	$\rho' \circ (\iota_m \ot \iota_n) = 0$ unless $m = n$. 
	By property \eqref{eq:Hopfpairing_a} and by definition of 
	the counit on $\talg(F^-)$, we have 
	$\rho'\circ (\eta \ot \iota_n) = \varepsilon \circ \iota_n = 0$ for 
	all $n > 0$. 
	Suppose now 	that $(\rho' \circ (\iota_{m'} \ot \iota_n) \neq 0) \Rightarrow (m' = n)$ 
	holds for all $0 \leq m' < m$ (we know this for $m = 1$, already). 
	Since the coproduct on $\talg(F^+)$ preserves the tensor 
	degree, one can write 
	$\Delta\circ\iota_m = \sum_{k=0}^m (\iota_k \ot \iota_{m-k})\circ\Delta_{k,m-k}$ 
	for some $\Delta_{k,m-k}:\, F^{\ot m} \to F^{\ot k} \ot F^{\ot (m-k)}$. 
	By property \eqref{eq:Hopfpairing_b}, we have 
	$\rho'\circ(\iota_{m} \ot \iota_n) 
	= \rho'\circ(\iota_m \ot (\mu_{n-1,1}\circ(\iota_{n-1} \ot \iota_1)))	
	= \rho'\circ(\id_{\talg(F^+)} \ot \rho' \ot \id_{F^-})
		\circ ((\Delta \circ \iota_{m}) \ot (\iota_{n-1} \ot \iota_1)) 
		= \sum_{k = 0}^{m} \rho'\circ(\id_{(F^+)^{\ot k}} \ot \rho' \ot \id_{F^-})
		\circ (\Delta_{k,m-k} \ot (\iota_{n-1} \ot \iota_1))$. 
	Since $k < m$ or $m-k < m$ holds always, only a summand 
	with $k=1$ and $m-1 = n-1$ can contribute, but this means $m = n$. 
	Induction over $m$ shows that $\rho'$ preserves the degree.
	
	By iterating, say, property \eqref{eq:Hopfpairing_a} of the Hopf pairing one finds 
	$\rho' \circ (\iota_n \ot \iota_n) = b^{*n} \circ (\iota_1 \ot \cdots \ot \iota_1 \ot \Delta_{1,\dots,1})$.
	Here $\Delta_{1,\dots,1}$ is the $n$-fold iteration of the coproduct, giving a map $\talg(\Fa) \to \talg(\Fa)^{\ot n}$, applied to $(\Fa)^{\ot n}$ and then composed with the projection to $\Fa$ in each component.
	By \cite[Cor.\,2.8]{Schau96}, $\Delta_{1,\dots,1} = \mathbf{S}_n$, completing the argument.
\end{proof}

\begin{prop} \label{thm:comp_eqiv_YD}
 Let $\CatD$ be braided and suppose that Condition \ref{cond:councop} 
 is fulfilled. Let $F \in \DZ(\CatD)$ be of the form $F = \iota^+(\Fc) \oplus \iota^-(\Fa)$ 
 for $\Fc,\Fa \in \CatD$. Given a morphism $\bF: \Fc \ot \Fa \to \one$ 
 in $\CatD$, let $\rho: \talg(\Fc) \ot \talg(\Fa) \to \one$ be its
 extension 
 to a Hopf pairing via Lemma \ref{lem:extHopf}. 
 Let $X \in \CatD$ and $m_\pm$ be left-/right-actions of $\talg(F^\pm)$ 
 on $X$. 
The following are equivalent:
 \begin{enumerate}[(a)]
 \item $(X,m_+,m_-)$ is a Yetter-Drinfeld module for $\talg(\Fc), \talg(\Fa), \rho$ 
 \item The object $(X,m)$ of $\CatD_F$ with $m = m_+ \circ (\iota_\Fc \ot \id_X)  
	\,\oplus \,m_- \circ (\id_X \ot \iota_\Fa) \circ c_{X,\Fa}^{-1}$ satisfies the commutation condition  with $\bF$. 
 \end{enumerate}
\end{prop}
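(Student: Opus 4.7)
The plan is to reduce both sides of the claimed equivalence to the behaviour of $m_\pm$ on the generators $\Fc,\Fa$ of the tensor algebras, and then to observe that in this restricted form the two conditions are literally the same identity. By Proposition \ref{prop:tensormod} and its obvious right-module analogue, specifying the actions $m_+,m_-$ is equivalent to specifying their restrictions $m_+|_{\Fc}\colon \Fc \ot X \to X$ and $m_-|_{\Fa}\colon X \ot \Fa \to X$; the morphism $m$ in the statement is built precisely from these two restrictions, with $c_{X,\Fa}^{-1}$ rewriting the right $\Fa$-action as a left action so as to fit into the format of Definition \ref{df:catdf}.

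I would then unpack the commutation condition \eqref{eq:compatible} using the explicit half-braidings $\varphi_{\iota^+(\Fc),Y}=c_{\Fc,Y}$ and $\varphi_{\iota^-(\Fa),Y}=c_{Y,\Fa}^{-1}$; in particular, the half-braiding of $\iota^+(\Fc)\ot\iota^-(\Fa)\in\DZ(\CatD)$ on $X$ becomes $(\id_{\Fc}\ot c_{X,\Fa}^{-1})\circ(c_{\Fc,X}\ot\id_{\Fa})$. After substituting the formulas for $m_{\Fc}$ and $m_{\Fa}$ into \eqref{eq:compatible}, this condition rewrites as an explicit identity of morphisms $\Fc\ot\Fa\ot X \to X$ in terms of $m_\pm|_{\Fca}$, $\bF$, and the braiding of $\CatD$ alone.

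On the Yetter-Drinfeld side, I would precompose the two sides of \eqref{eq:YD-cond} with $\iota_1 \ot \id_X \ot \iota_1 \colon \Fc \ot X \ot \Fa \to \talg(\Fc) \ot X \ot \talg(\Fa)$ and use that on generators the coproducts are primitive, $\Delta\circ\iota_1 = \iota_1 \ot \eta + \eta \ot \iota_1$, that the antipode acts as $S\circ\iota_1 = -\iota_1$, and that $\rho(\bF)\circ(\iota_1\ot\iota_1)=\bF$ by Lemma \ref{lem:extHopf}. Each coproduct leg of the YD diagram then splits into a two-term sum; after discarding terms killed by counits and collecting the remaining ones, the diagram collapses to precisely the identity obtained in the previous paragraph, the minus sign on the right-hand side of \eqref{eq:compatible} being produced by the antipode contribution $S\circ\iota_1=-\iota_1$. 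This settles the direction (a)$\Rightarrow$(b).

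The main obstacle is the converse: \eqref{eq:compatible} is a generator-level identity, whereas \eqref{eq:YD-cond} must hold on the whole of $\talg(\Fc)\ot X \ot \talg(\Fa)$. I would prove this by induction on the bidegree $(n,m)$ of inputs in $\Fc^{\ot n} \ot X \ot \Fa^{\ot m}$, using that both sides of \eqref{eq:YD-cond} are determined by their restrictions to generators: the associativity of $m_+$ and $m_-$, the compatibility of $\Delta$ with $\mu$ on $\talg(\Fca)$, and the Hopf-pairing identities \eqref{eq:Hopfpairing_a}--\eqref{eq:Hopfpairing_b} together let one split off one generator from either tensor factor, reducing the YD identity in bidegree $(n+1,m)$ or $(n,m+1)$ to the same identity in lower bidegree. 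The bidegrees with $n=0$ or $m=0$ are trivial since then one of the sides of \eqref{eq:YD-cond} only involves the unit of one tensor algebra, and the base case $(1,1)$ is the generator-level identity already established.
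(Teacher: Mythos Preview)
Your proposal is correct and follows essentially the same approach as the paper: for (a)$\Rightarrow$(b) you restrict the Yetter--Drinfeld identity \eqref{eq:YD-cond} to the degree-one subobjects using primitivity of $\iota_1$ and $S\circ\iota_1=-\iota_1$, and for (b)$\Rightarrow$(a) you promote the generator-level identity to all of $\talg(\Fc)\ot X\ot\talg(\Fa)$ by induction via the multiplication, exploiting the bialgebra axiom for $\Delta$ and the Hopf-pairing relations. The paper organises the induction one side at a time (first $\talg(\Fc)$ with $\Fa$ fixed, then $\talg(\Fa)$) rather than by bidegree, but this is a purely cosmetic difference.
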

\begin{proof}
(a)$\,\Rightarrow\,$(b): 
	If $(X,m_+,m_-)$ is a Yetter-Drinfeld module, then by 
	\begin{align}
		\raisebox{-0.5\height}{\includegraphics[scale=0.50]{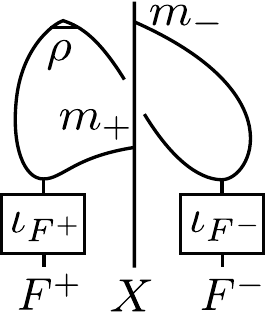}} 
		\quad &= \quad \raisebox{-0.5\height}{\includegraphics[scale=0.50]{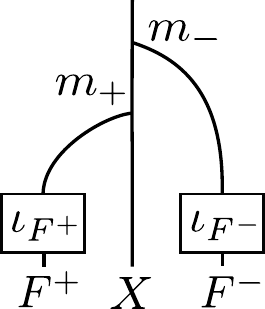}} 
		\quad + \quad \raisebox{-0.5\height}{\includegraphics[scale=0.50]{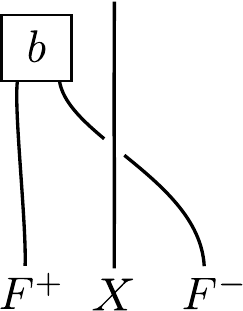}} , \nonumber\\ \nonumber\\
		\raisebox{-0.5\height}{\includegraphics[scale=0.50]{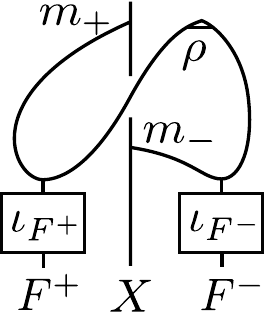}} 
		\quad &= \quad \raisebox{-0.5\height}{\includegraphics[scale=0.50]{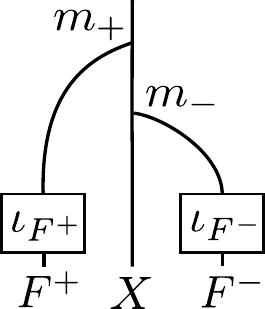}} 
		\quad + \raisebox{-0.5\height}{\includegraphics[scale=0.50]{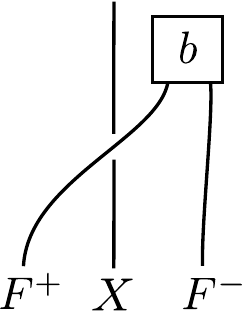}} \ ,
	\end{align}
	the commutation condition  for  $(X,m)$ with $\bF$ is just the Yetter-Drinfeld condition restricted to the subobjects 
	$\Fc$ and $\Fa$ of $\talg(\Fc),\talg(\Fa)$. 

\medskip\noindent
(b)$\,\Rightarrow\,$(a): 
Let $\iota_A: A \to \talg(\Fc)$, $\iota_{B}: B \to \talg(\Fc)$ and 
	$\iota_C: C \to \talg(\Fa)$ be morphisms in $\CatD$. 
	Suppose that the Yetter-Drinfeld condition holds on 
	$\iota_A \ot \id_X \ot \iota_C$ and $\iota_{B} \ot \id_X \ot \iota_C$. Then it also holds on 
	$(\mu \circ (\iota_{A} \ot \iota_{B})) \ot \id_X \ot \iota_C$: 
	\begin{align}
		&\raisebox{-0.5\height}{\includegraphics[scale=0.50]{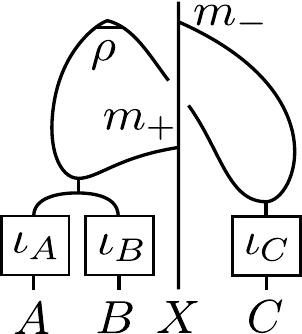}} 
		\quad = \quad \raisebox{-0.5\height}{\includegraphics[scale=0.50]{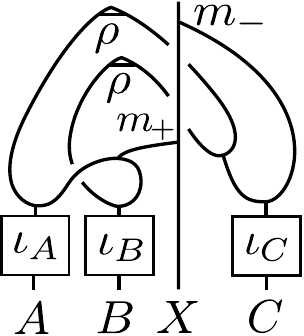}} 
		\quad = \quad \raisebox{-0.5\height}{\includegraphics[scale=0.50]{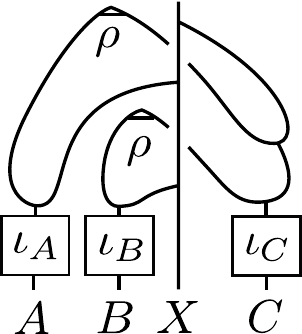}} \nonumber\\ \nonumber\\
		&= \ \raisebox{-0.5\height}{\includegraphics[scale=0.50]{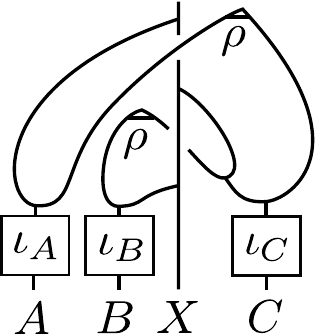}} 
		\quad = \quad \raisebox{-0.5\height}{\includegraphics[scale=0.50]{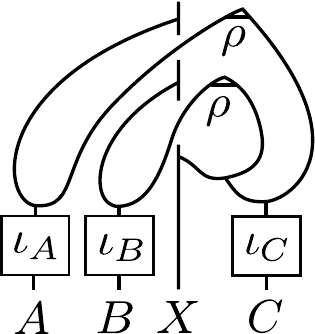}} 
		\quad = \ \raisebox{-0.5\height}{\includegraphics[scale=0.50]{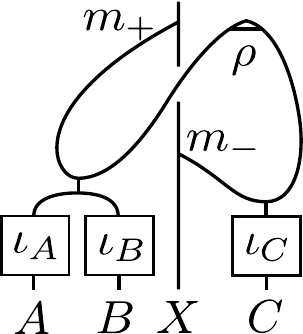}} \ .
	\end{align}
For fixed $\iota_C$, and by taking $A = F^+$ and $B = (F^+)^{\otimes n}$, an inductive argument now shows that the Yetter-Drinfeld condition holds on all of $\talg(F^+)$. An analogous argument for $\talg(\Fa)$ then shows that the Yetter-Drinfeld condition follows from the commutation condition  for $(X,m)$ with $\bF$.
\end{proof}

The Yetter-Drinfeld condition is compatible with 
the standard braided tensor product for left and right modules, 
see \cite[Sect.\,3.3]{Be95}. 
That is to say, if $(X,m_X^1,m_X^2)$ and $(Y,m_Y^1,m_Y^2)$ 
are Yetter-Drinfeld modules for Hopf algebras 
$H_1,H_2$ and a Hopf pairing $\rho$, then so is 
$(X \ot Y, m_{X \ot Y}^1, m_{X \ot Y}^2)$, 
where 
\begin{align}
m_{X \ot Y}^1 := \ \raisebox{-0.5\height}{\includegraphics[scale=0.50]{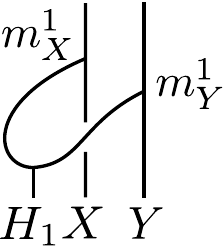}} \ , \qquad 
m_{X \ot Y}^2 := \ \raisebox{-0.5\height}{\includegraphics[scale=0.50]{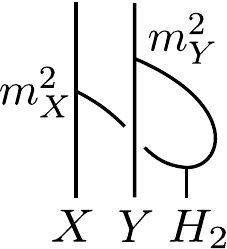}} \ .
\end{align}
This makes the category of Yetter-Drinfeld modules 
monoidal \cite[Sect.\,3.3]{Be95}. 
Refining the above theorem one obtains:

\begin{thm}\label{thm:YD-com-rel-mon}
	With the conditions and notation as in Proposition \ref{thm:comp_eqiv_YD}, 
	the category $\talg(\Fc)\YD{\rho}\talg(\Fa)$ 
	and the full subcategory of $\CatD_F$ consisting of objects satisfying the commutation condition  with $\bF$ 
	are isomorphic as monoidal categories.
\end{thm}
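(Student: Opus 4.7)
The strategy is to upgrade the object-level correspondence of Proposition~\ref{thm:comp_eqiv_YD} to a functor, show it is an isomorphism of categories, and then verify compatibility with tensor products.

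First, I would define a functor
\begin{align}
\Phi : \talg(\Fc)\YD{\rho}\talg(\Fa) \longrightarrow \CatD_F
\end{align}
as follows. On objects, send a Yetter-Drinfeld module $(X, m_+, m_-)$ to $(X, m)$ with
\begin{align}
m \;=\; m_+ \circ (\iota_{\Fc} \ot \id_X) \;\oplus\; m_- \circ (\id_X \ot \iota_{\Fa}) \circ c_{X,\Fa}^{-1},
\end{align}
as in Proposition~\ref{thm:comp_eqiv_YD}. On morphisms, $\Phi$ is the identity on the underlying morphism in $\CatD$. That $\Phi$ lands in the subcategory satisfying the commutation condition with $\bF$ is exactly (a)$\Rightarrow$(b) of that proposition.

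Next, I would verify that $\Phi$ is an isomorphism of categories. Since $\tilde F = \Fc \oplus \Fa$, a morphism $m$ decomposes into two pieces, and by Proposition~\ref{prop:tensormod} applied to $\talg(\Fc)$ and $\talg(\Fa)$ separately, a choice of $m_\Fc|_{\Fc \ot X}$ (resp.\ $m_\Fa|_{X \ot \Fa}$, translated across $c_{X,\Fa}^{-1}$) extends uniquely to a left $\talg(\Fc)$-action (resp.\ right $\talg(\Fa)$-action). This gives a bijection on objects; the inverse direction for the commutation condition is (b)$\Rightarrow$(a) of Proposition~\ref{thm:comp_eqiv_YD}. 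For morphisms, a $\CatD$-morphism $f : X \to Y$ satisfies the intertwining condition \eqref{eq:DF-morph-diag} for $\tilde F = \Fc \oplus \Fa$ if and only if it intertwines both $m_\Fc$ and $m_\Fa$; by the uniqueness part of Proposition~\ref{prop:tensormod} this is equivalent to $f$ being simultaneously a left $\talg(\Fc)$-module map and a right $\talg(\Fa)$-module map, i.e.\ a morphism of Yetter-Drinfeld modules. Hence $\Phi$ is bijective on hom-sets.

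The main step is monoidality. I would show that for Yetter-Drinfeld modules $X,Y$ the image $\Phi(X \ot Y)$ coincides with $\Phi(X) \ot_{\CatD_F} \Phi(Y)$, i.e.\ with the object obtained using $\tpmorph(-,-)$ from \eqref{eq:Tpmorph}. The braided tensor product of Yetter-Drinfeld modules uses the coproducts of $\talg(\Fc)$ and $\talg(\Fa)$, and restricted to the generating subobjects one has
\begin{align}
\Delta_{\talg(\Fca)}\big|_{\Fca} \;=\; \iota_1 \ot \eta \,+\, \eta \ot \iota_1 .
\end{align}
Substituting this into the formulas for $m^1_{X\ot Y}$ and $m^2_{X\ot Y}$ produces exactly the two summands of $\tpmorph(m,n)$ in \eqref{eq:Tpmorph}: the first summand comes from the $\iota_1 \ot \eta$ piece of $\Delta$, the second from the $\eta \ot \iota_1$ piece, where the braiding $c_{\Fc,X}$ (resp.\ $c^{-1}_{X,\Fa}$) that appears in the braided tensor product of $H$-modules is precisely the half-braiding of $\iota^+(\Fc)$ (resp.\ $\iota^-(\Fa)$) in $\DZ(\CatD)$. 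Therefore the structure morphisms and associators agree on the nose, and $\Phi$ extends to a strict monoidal isomorphism. Finally, the full subcategory of objects satisfying the commutation condition is monoidal by Proposition~\ref{prop:Delta} (with $\Fc = \iota^+(\Fc)$ and $\Fa = \iota^-(\Fa)$ mutually transparent by Remark~\ref{rem:comments-on-comm-cond}(i)), so the restriction of $\Phi$ to its essential image is monoidal in a well-defined way.

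The main obstacle will be the monoidality verification: one has to carefully track how the coproducts on $\talg(\Fca)$, the half-braidings in $\DZ(\CatD)$, and the conversion between $X \ot \Fa$ and $\Fa \ot X$ via $c_{X,\Fa}^{-1}$ combine to reproduce $\tpmorph(-,-)$. The remaining checks (associators and the unit object $(\one,0)$ corresponding to the trivial Yetter-Drinfeld module with both actions given by $\varepsilon$) are immediate by construction.
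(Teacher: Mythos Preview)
Your proposal is correct and follows essentially the same approach as the paper: define the functor on objects via the correspondence of Proposition~\ref{thm:comp_eqiv_YD}, take the identity on morphisms, and verify monoidality by restricting the coproducts $\Delta_{\talg(\Fca)}$ to the generating subobjects $\Fca$ (which is precisely the argument the paper alludes to by pointing back to Proposition~\ref{prop:tensormod}). Your write-up is in fact more explicit than the paper's, which simply says ``Monoidality may be shown in a similar way as in Proposition~\ref{prop:tensormod}.''
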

\begin{proof}
	Let $\CatD'_F$ be the full subcategory of $\CatD_F$ 
	consisting of objects satisfying the commutation condition  with $\bF$. 
	Two inverse functors $\talg(\Fc)\YD{\rho}\talg(\Fa) \to\CatD'_F$ 
	and $\CatD'_F \to \talg(\Fc)\YD{\rho}\talg(\Fa)$ are described on 
	objects in Proposition \ref{thm:comp_eqiv_YD} above and on morphisms 
	they are defined to be the identity. 
	Monoidality
	may be shown in a similar way as in 
	Proposition \ref{prop:tensormod}. 
\end{proof}

The statement of Theorem \ref{thm:YD-com-rel-mon} is the main observation made in the present paper: In Section \ref{ssec:PertDefNonlocalConsCharges} we linked the existence of non-local conserved charges in a perturbed conformal field theory to the commutation condition in Definition \ref{df:commcond}. The above result in turn relates the commutation condition to a standard mathematical object, namely Yetter-Drinfeld modules, and shows that their tensor products agree with the composition of the corresponding non-local charges.

In the following we will briefly make a connection to the study of Nichols algebras and comment on the situation where $\CatD$ is not braided.

\medskip

An object $X$ in a monoidal category is said to be (left-)dualisable
if there are an object $X^*$ and morphisms 
$X \ot X^* \to \one$ (\emph{evaluation})
and 
$\one \to X^* \ot X$ (\emph{coevaluation}), such that
\begin{align}
 	\raisebox{-0.5\height}{\includegraphics[scale=0.50]{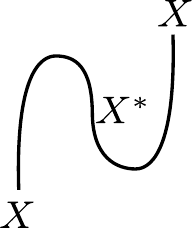}} 
 		= \raisebox{-0.6\height}{\includegraphics[scale=0.50]{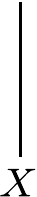}} \qquad 
 		\text{and} \qquad 
 	\raisebox{-0.5\height}{\includegraphics[scale=0.50]{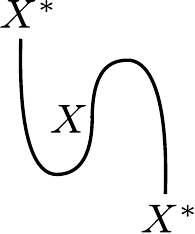}} 
 		= \raisebox{-0.6\height}{\includegraphics[scale=0.50]{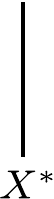}} 
	 \ .
\end{align}

\begin{rem} \label{rem:YD-braided}
	If $H_2$ is dual to $H_1$, i.e.\ if there is a coevaluation for $\rho$, 
	then $H_1\YD{\rho}H_2$ allows for a braiding. This is because the coevaluation can be used to turn the right-action of $H_2$ into 
	a right-coaction of $H_1$. Hence, $H_1\YD{\rho}H_2$ and, with the notation 
	as in \cite{Be95}, ${}_{H_1}\mathcal{DY}(\CatD)^{H_1^{op}}$ are monoidally isomorphic.\footnote{
Some care has to be taken because $H_1^{op}$ is a bialgebra only in $\CatD^\text{rev}$; we refer to \cite{Be95} for details.}
	The latter category is shown to be braided in \cite[Sect.\,3.4]{Be95}. 
\end{rem}
	
Tensor algebras typically have no duals because they are infinite direct sums. Nonetheless, it would be desirable to have a braiding on at least part of $\talg(\Fc)\YD{\rho}\talg(\Fa)$ since this would 
imply that the corresponding non-local conserved charges mutually commute.
In special cases this can be achieved as follows.

Consider Yetter-Drinfeld modules with the additional property that the radicals of $\rho(b)$ in $\talg(\Fc)$ and $\talg(\Fa)$ act trivially. In other words, consider Yetter-Drinfeld modules of the quotients of $\talg(\Fc)$ and $\talg(\Fa)$ by the radicals of $\rho(b)$.
If $\bF: \Fc \ot \Fa \to \one$ is a duality (as we will assume from here on), these 
	quotients are called the \emph{Nichols} or \emph{bitensor algebras} $\B(F^\pm)$ of $F^\pm$, 
	see e.g.\ \cite[Def.\,2.5 \& Prop.\,2.9]{Schau96} or \cite[Prop.\,2.10]{AndSchn02}. 
	The (graded) Hopf pairing $\rho(b)$ of $\talg(\Fc)$ and $\talg(\Fa)$ 
	descends to a nondegenerate graded Hopf pairing 
	$\tilde\rho = \sum_{n \in \N} \tilde\rho_n$ 
	of $\B(\Fc)$ and $\B(\Fa)$, where 
	each $\tilde\rho_n: \B(\Fc)_n \ot \B(\Fa)_n \to \one$ is a 
	duality (provided that the $F^\pm$ themselves sit in some full abelian monoidal subcategory of $\CatD$ which has duals).

For certain choices of $F^\pm$ and $b$ it may happen that the quotients  $\B(F^\pm)$ are ``small enough'' for $\tilde\rho$ to be a duality. By Remark \ref{rem:YD-braided}, in this case the category $\B(\Fc)\YD{\tilde\rho}\B(\Fa)$ is braided and hence provides a full braided subcategory of $\talg(\Fc)\YD{\rho}\talg(\Fa)$ (see e.g.\ \cite[Thm.\,4.3]{AndSchn02} and references therein for examples where this happens). 
Nichols algebras with duals have recently been studied in a different context, namely in relation to screening operators in conformal field theory \cite{SemTip11, Sem11a, Sem11}.

A possibility to obtain a braided subcategory of $\talg(\Fc)\YD{\rho}\talg(\Fa)$ without assuming that $\B(F^\pm)$ have duals is to consider Yetter-Drinfeld modules  for $\B(\Fc)$ and 
	$\B(\Fa)$ of the form $\B(\Fc) \ot X$, where 
	the action is given in Example \ref{ex:YD-modules}. To see that the subcategory generated by
	such modules is braided, note that
	$\B(\Fa)$ acts ``locally nilpotently'' on modules $\B(\Fc) \ot X$. 
	So the infinite sum occuring when writing down the formal 
	coevaluation for $\tilde\rho$ actually becomes finite 
	(cf.\ \cite[Prop. 6.6.3]{EFK98} for the case of quantum groups). 

\begin{rem}\label{rem:D-not-braided}For the discussion of Yetter-Drinfeld modules we assumed for simplicity that $\CatD$ is braided. Let us sketch how the results can be formulated in the more general setting of Section \ref{sec:tensor_algebras}.
Let thus $\CatD$ be a monoidal category satisfying Condition \ref{cond:councop},
	let $F \in \DZ(\CatD)$ be of the form $F = \Fc \oplus \Fa$ 
	for some mutually transparent $\Fc,\Fa \in \DZ(\CatD)$, and 
	let $\bF: \Fc \ot \Fa \to \one$ 
	be a morphism in $\CatD$. 
Let us write $\overline{\Fa}$ instead of $\Fa$ when considering $\Fa$ as an object in $\DZ(\CatD)^\mathrm{rev}$. The corresponding Hopf algebra is $\talg(\overline{\Fa}) \in \DZ(\CatD)^\mathrm{rev}$. Note that the coproduct on $U\talg(F^-) = U\talg(\overline{F^-}) \in \CatD$ changes when the tensor algebra is taken in $\DZ(\CatD)^\mathrm{rev}$ instead of $\DZ(\CatD)$.
	Similar to Lemma \ref{lem:extHopf} one verifies that 
	$b$ extends to a morphism 
	$\rho(b): \talg(\Fc) \ot \talg(\overline{\Fa}) \to \one$ in $\CatD$ 
	satisfying the Hopf pairing conditions \eqref{eq:Hopfpairing_a} 
	and \eqref{eq:Hopfpairing_b}. 
	Mutual transparency of $\Fc$ and $\Fa$ is needed for 
	showing that the $n$'th symmetriser $\mathbf{S}_n$ for $\talg(F^+)$ 
	is $b^{*n}$-dual to the one for $\talg(\overline{F^-})$. 
	Let $X \in \CatD$ and let $m_+: \talg(\Fc) \ot X \to X$, 
	$m_-: X \ot \talg(\Fa) \to X$ be actions in $\CatD$. 
	Then the following are equivalent:
	\begin{enumerate}[(a)]
	\item The morphisms $m_\pm$ and $\rho(b)$ satisfy:
		\begin{align}\label{eq:YD-cond-for-nonbraided-cat}
		&\raisebox{-0.5\height}{\includegraphics[scale=0.50]{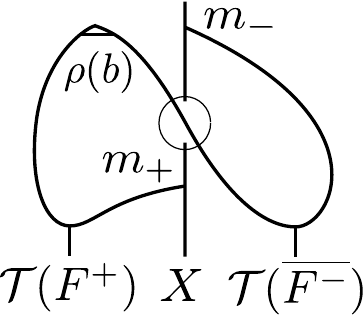}} 
		\quad = \quad 
		\raisebox{-0.5\height}{\includegraphics[scale=0.50]{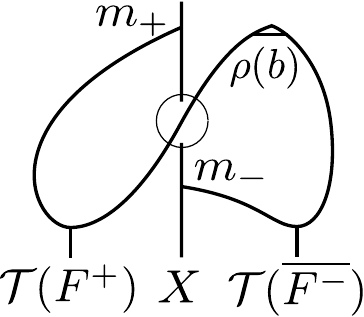}} \ ,
		\end{align}
	\item The object $(X,m)$ with $m = m_+ \circ (\iota_\Fc \ot \id_X) 
		\, \oplus \, m_- \circ (\id_X \ot \iota_\Fa) \circ \varphi_{\Fa,X}$
		of $\CatD_F$ satisfies the commutation condition  with $\bF$.
	\end{enumerate}
	This can be proven in a similar manner as in Proposition \ref{thm:comp_eqiv_YD}, 
	but the assumption that $F^+$ and $F^-$ are mutually transparent is 
	crucial. 
	One can also check that \eqref{eq:YD-cond-for-nonbraided-cat} is 
	compatible with tensor products. 
	This allows to generalise Theorem \ref{thm:YD-com-rel-mon}. 
\end{rem}

\section{Uncompactified free boson \& sin(h)-Gordon theory}
\label{sec:uncompboson}

In this section we apply our general recipe to the theories
considered in \cite{BLZ96, BLZ97b}, which motivated our work.\footnote{
   In  \cite{BLZ96, BLZ97b} the free boson is considered with background charge. This amounts
   to modifying the stress tensor by a total derivative and changes the central charge away from 1.
   We implicitly take the free boson with $c=1$, but this does not affect the algebraic properties we discuss.}
We show how to recover a deformed enveloping algebra 
of the loop algebra $L\lsl_2$ as the quantum algebra 
underlying the integrable structure in those theories. 

In this section, all vector spaces, algebras, etc.\ are over $\C$. 

\subsection{Input data: the braided category $\Cat$ and the choice of bulk field $\bF$}
\label{ssec:unc_input-data}

We say that an algebra $A$ acts 
\emph{semisimply} on a vector space $V$ if $V$ decomposes into 
a (possibly infinite) direct sum of simple modules. 
We denote by $\Repss(A)$ the category of semisimple representations. If 
$B \subseteq A$ is a subalgebra, we write $\RepAss{B}(A)$ for the 
category of representations of $A$ which restrict to semisimple 
representations of $B$. 

We will make use of the topological algebras $\E(\h; X)$, which we now describe.
Let $\h$ and $X$ be either lists of ``generators'' or vector spaces, thought to be spanned by generators. $\E(\h; X)$ is a topological completion of the algebra generated by variables $\h$ and $X$, where the variables from $X$ are non-commuting while those from $\h$ commute among themselves but not with $X$. The details have been deferred to Appendix \ref{apx:algE}.
The completion $\E(\h,X)$ has the property that 
exponentials $e^h$ of generators $h \in \h$ are elements of $\E(\h;X)$. 

We will now restrict our attention to the case of a single generator $h$.
Define a coproduct $\Delta$, counit $\varepsilon$ and antipode $S$ on $\E(h;0)$ by 
\begin{align}
	\Delta(h) = h \ot \one + \one \ot h ~~, \qquad 
	\varepsilon(h) = 0~~, \qquad 
	S(h) = -h \ .
\end{align}
The coproduct $\Delta$ is a map into the completed tensor product 
$\E(h;0) \bar\ot \E(h;0) := \E(h_1,h_2;0)$, where $h_1,h_2$ are to be identified with $h \ot \one$ and 
$\one \ot h$, respectively.
This equips $\E(h;0)$ with the structure of a (topological) Hopf algebra, which we will abbreviate 
by $\mathcal A$. To make $\mathcal A$ quasitriangular, we fix a complex number $\kappa$ and define the
R-matrix 
\begin{align}
	\mathcal R_{\mathcal A} = e^{\kappa \cdot h \ot h} \ ,
	\label{eq:C-gr-vsp-Rmat}
\end{align}
which is to be seen as an element of 
$\mathcal A \bar\ot \mathcal A$.
We then let 
\begin{align}
	\Cat := \Repss(\mathcal A) \ .
\end{align}
The R-matrix $\mathcal R_{\mathcal A}$ induces a braiding $c$ 
on $\Cat$, given by 
$c_{V,V'} := \tau_{V,V'} \circ (\rho \ot \rho')(\mathcal R_{\mathcal A})$ 
for semisimple representations $V = (V,\rho)$, $V' = (V',\rho')$ of $\algA$ and with $\tau$ being the tensor flip in $\Vect$; since all $\mathcal A$-modules involved are semisimple, $h$ acts diagonalisably and so the action of $\mathcal R_{\mathcal A}$ is well-defined. 

Objects of $\Cat$ are just vector spaces together with a 
diagonalisable endomorphism (the action of $h$). Simple objects of 
$\Cat$ consist of a one-dimensional vector space on which $h$ acts by multiplication with
a complex number.  
We denote by $\C_\alpha$ the representation of $\mathcal A$ 
on $\C$ determined by $h.1 = \alpha \in \C$. By definition of the coproduct on $\mathcal A$,
the tensor product of these simple objects satisfies  
\begin{align}
	\C_\alpha \ot \C_\beta \cong \C_{\alpha + \beta} \ .
\end{align}
The braiding $c$ on two simple objects is computed from the R-matrix to be
\begin{align}
	c_{\C_\alpha,\C_\beta} : \C_\alpha \ot \C_\beta \to \C_\beta \ot \C_\alpha, \qquad 
		1 \ot 1 \mapsto e^{\kappa \alpha\beta} \cdot 1 \ot 1 \ . 
		\label{eq:braiding-on-simples}
\end{align}

This completes the description of the braided monoidal category $\Cat$. The following remark outlines the relation to the free boson. 

\begin{rem}[Relation to the free boson CFT] \label{rem:unc-boson--cat-VOA} 
	The relevance of $\Cat = \Repss(\mathcal A)$ to free boson conformal field 
	theories arises as follows. 
	The Heisenberg Lie algebra is the Lie algebra with generators 
	$a_n$, $n\in\Z$ and $K$ and brackets $[a_m,a_n] = m \, \delta_{m+n,0} \, K$ and $[K,a_n] = 0$
	(see e.g.\ \cite[Sect.\,2.5]{KacVOAs}).
	To a simple object $\C_\alpha$ we associate the highest weight 
	representation $H_\alpha$ of the Heisenberg Lie algebra with highest 
	weight $\alpha$ and with $K$ acting as the identity. 
	This correspondence of simple objects extends to an equivalence 
	of $\C$-linear categories between $\Cat$ and the category of 
	representations of the Heisenberg algebra which are bounded below, have a diagonalisable action of $a_0$, and where $K$ acts as the identity, see \cite[Sect.\,3.5]{KacVOAs} and \cite[Thm.\,2.4]{RuSympFerm} for details. 

	To the highest weight vector of $H_\alpha$ we associate 
	a chiral field $V_\alpha(z)$, namely the 
	normal ordered exponential (vertex operator) 
	\begin{align}
		V_\alpha(x) = \, : e^{i\alpha\varphi(x)} : \qquad , ~~  x \in \R \ .
	\end{align}
	Here, $\varphi(x)$ is the chiral bosonic field 
	$\varphi(x) = q + a_0 \cdot x + i\sum_{n\neq 0} \frac{a_n}n e^{-in x}$, 
	with $q$ being conjugate to $a_0$, i.e.\ $[q,a_0] = i$. 
	The tensor product $\C_\alpha \ot \C_\beta \cong \C_{\alpha + \beta}$ 
	in $\Cat$ reflects the ``fusion'' 
	\begin{align}
		V_\alpha(x) \, V_\beta(y) \, \sim \, (x-y)^{\alpha\beta} \, \big( V_{\alpha + \beta}(x) + \text{reg}(x-y) 	\big)	\qquad , ~~ x > y \ .
	\end{align}
	The braiding $c$ reflects the behaviour of vertex operators under analytic continuation along a specified path exchanging $x$ around $y$
	\begin{align}
		V_\alpha(x)\,V_\beta(y) \,=\, e^{i\pi \alpha\beta} \big[ V_\beta(y)\,V_\alpha(x) \big]_{\text{anal.\,cont.}} 
		\qquad , ~~ x > y\ .
	\end{align}
	This agrees with \eqref{eq:braiding-on-simples} if $\kappa$ is set to $\pi i$. 
	A detailed discussion of how to obtain tensor product, associator and braiding from free
	boson vertex operators can be found in Sections 3--5 and Proposition 6.1 of 
	\cite{RuSympFerm} (the conventions there result in the inverse braiding relative to here). 
\end{rem}

Next we choose the bulk field $b$ to be used to perturb the free boson CFT according to the prescription in Section \ref{ssec:PertDefNonlocalConsCharges}. For the uncompactified free boson, the category $\CatD$ of Sections \ref{sec:tensor_algebras}--\ref{sec:comm_cond_YD} is given by $\Cat$ itself. We thus need to fix objects $F^+, F^- \in \DZ(\Cat)$ and a morphism $b : F^+ \otimes F^- \to \one$ in $\Cat$ (the forgetful functor $\DZ(\Cat) \to \Cat$ is implicit).

Fix a constant $\omega \in \C^\times$
 and let $F_\omega = \simp{\omega} \oplus \simp{-\omega} \in \Cat$ and
$F^\pm = \iota^\pm(F_\omega) \in \DZ(\Cat)$, where 
$\iota^\pm$ are the functors from \eqref{eq:Functors=i+,i-}. We set
\begin{align}
F := F^+ \oplus F^-  \,\in\, \DZ(\Cat) \ . 
\label{eq:uncomp-free-boson-F}
\end{align}
Let $i_{\alpha,\beta}$ be the canonical identifications 
$\simp{\alpha} \ot \simp{\beta} \stackrel{\sim}{\to} \simp{\alpha+\beta}$ 
determined by $1 \ot 1 \mapsto 1$. 
For $\zeta,\xi \in \C$ define the morphism 
$\bF_{\zeta,\xi} : F^+ \ot F^- \to \one$ in $\Cat$ (i.e.\  $\bF_{\zeta,\xi} : F_\omega \ot F_\omega \to \one$) 
as
\begin{align}
 \bF_{\zeta,\xi} = \zeta \cdot i_{\omega,-\omega} 
 	+ \xi \cdot i_{-\omega,\omega}  \ ,
	\label{eq:uncom-b_ze-xi_def}
\end{align}
where we omitted the obvious projections $F_\omega \to \C_{\pm\omega}$. 
Note that any morphism $F_\omega \ot F_\omega \to \one$ is of the form $\bF_{\zeta,\xi}$.

\begin{rem}[Relation to the sin(h)-Gordon model]\label{eq:sin(h)-Gorden-uncomp}
	The Hamiltonian of the two-dimensional massless sine-Gordon model 
	on a cylinder is given by 
	$H_0 + \lambda\int_0^{2\pi} {:}\cos(\omega \varphi^\text{full}(x)){:} \,dx$, 
	where $H_0$ is the Hamiltonian of the (non-chiral) free massless boson $\varphi^\text{full}$ and 
	$\lambda, \omega$ are real constants.
	The Hamiltonian of the sinh-Gordon model is given by 
	the same formula but with purely imaginary $\omega$.
   (See e.g.\ \cite{NT09} for a recent treatment and further references.) 
	The perturbing bulk field is 
	${:}\cos(\omega \varphi^\text{full}(x)){:} 
		\,=\, \frac 12 \,{:}e^{i\omega\varphi^\text{full}(x)}{:} 
		+ \frac 12 \,{:}e^{-i\omega\varphi^\text{full}(x)}{:} 
		\,= \frac 12 V^\text{full}_\omega(x) + \frac 12 V^\text{full}_{-\omega}(x)$, 
	where $V^\text{full}_\omega$ is the ``full vertex operator''
	$V_\omega \ot V_{-\omega}$. 
	By construction, the full vertex operator $V^\text{full}_\omega$ corresponds to
	the morphism $i_{\omega,-\omega}$. Thus, choosing
	the bulk field $\bF = \bF_{\zeta,\xi}$ 
	with $\zeta = \xi$ will describe the sine-Gordon or 
	sinh-Gordon model, depending on whether $\omega \in \R$ or $\omega \in i\R$. 
\end{rem}

\subsection{Perturbed defects: modules of the tensor algebra $\talg(F)$ in $\Cat$}
\label{ssec:unc_bosonisation}

As argued in Section \ref{ssec:pert-def-and-cat}, 
defects perturbed by the chiral constituents $F^+$ and $F^-$ of 
the bulk field $\bF$ are determined by objects of the 
category $\Cat_F$ with $F$ as in \eqref{eq:uncomp-free-boson-F}, or, equivalently, by modules of the 
tensor algebra $\talg(F)$ in $\Cat$. 
We would now like to describe $\talg(F)\Mod_\Cat$ as 
a category of representations of an ordinary Hopf algebra. 
For Hopf algebras in the braided category $\Cat$ we can, by Tannaka-Krein reconstruction, describe the category 
of its modules in $\Cat$ 
as a category of representations of an ordinary Hopf algebra. 
This algebra can, for instance, be found using the ``bosonisation'' formula 
in \cite[Thm\,4.11]{Ma95}. 
Recall that $\talg(F)$ is a Hopf algebra in $\DZ(\Cat)$ but not in $\Cat$. However, one can still describe $\talg(F)\Mod_\Cat$ as 
a category of representations of an ordinary Hopf algebra along the same lines; this is what we will do next.

\medskip

Let $\algA_F$ be the (topological) Hopf algebra given as an associative algebra by 
$\E(h$; $f^+$, $f^-$,$\bar f^+$,$\bar f^-)$ modulo 
\begin{align} \label{eq:algaf1}
 [h,f^\pm] = \pm \omega f^\pm \ , \qquad [h,\bar f^\pm] = \pm \omega\bar f^\pm \ , 
\end{align}
equipped with the coproduct given by 
\begin{gather} \label{eq:def-coproduct-A_F}
 \Delta(h) = h \ot \one + \one \ot h \ , \\
 \Delta(f^\pm) = f^\pm \ot \one + e^{\pm \kappa\omega \cdot h} \ot f^\pm ~~, \qquad 
 \Delta(\bar f^\pm) = \bar f^\pm \ot \one + e^{\mp \kappa\omega \cdot h} \ot \bar f^\pm\ . \nonumber
\end{gather}
Note that $\algA_F$ contains the algebra $\algA = \E(h;0)$ introduced in the previous section as a sub-Hopf-algebra. Thus, restricting a representation $(V,\rho)$ of $\algA_F$ to 
$\algA$ gives a tensor functor 
from $\RepAss{\algA}(\algA_F)$ to $\Repss(\algA)$. 
The actions of $f^\pm$, $\bar f^\pm$ give $(V,\rho|_{\algA})$ 
the extra structure of a $\talg(F)$-module and we obtain: 

\begin{thm}[Category for perturbed defects] \label{thm:algA_F}
 Let $\algA_F$ be as defined above. 
 The following categories are monoidally isomorphic: 
 \begin{align} 
 \talg(F)\Mod_{\Cat} \, \cong \,\Cat_F \,\cong\, \RepAss{\algA}(\algA_F) \ . 
 \end{align}
\end{thm}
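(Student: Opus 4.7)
\medskip

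\noindent\textbf{Proof plan.} The first isomorphism $\talg(F)\Mod_{\Cat} \cong \Cat_F$ is already furnished by Proposition \ref{prop:tensormod}, so the only work is to produce a monoidal isomorphism $\Cat_F \cong \RepAss{\algA}(\algA_F)$. The underlying idea is a Tannaka-Krein / bosonisation style argument: the algebra $\algA_F$ is designed so that $\algA \hookrightarrow \algA_F$ plays the role of the ``base'' while the four generators $f^\pm,\bar f^\pm$ implement the four simple summands of the underlying object $\tilde F = \C_\omega \oplus \C_{-\omega} \oplus \C_\omega \oplus \C_{-\omega}$ of $F$, with the coproduct formulae \eqref{eq:def-coproduct-A_F} encoding the half-braiding of $F \in \DZ(\Cat)$.

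\medskip

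\noindent\textbf{Construction of the functors.} In one direction, send $(V,\rho) \in \RepAss{\algA}(\algA_F)$ to $(V|_\algA, m)$ where $V|_\algA$ is semisimple by hypothesis, so lives in $\Cat$, and where $m \colon \tilde F \ot V \to V$ is obtained by packaging the four operators $\rho(f^\pm),\rho(\bar f^\pm)$. The relations $[h,f^\pm]=\pm\omega f^\pm$ and $[h,\bar f^\pm]=\pm\omega\bar f^\pm$ from \eqref{eq:algaf1} are exactly the statement that each of these operators is an $\algA$-intertwiner $\C_{\pm\omega}\ot V\to V$, so $m$ is a morphism in $\Cat$. In the reverse direction, given $(V,m)\in\Cat_F$, decompose $m$ according to the four summands of $\tilde F$ into maps $m^\pm,\bar m^\pm$ and use them to define $\rho(f^\pm),\rho(\bar f^\pm)$; then extend $\rho$ by the $\Cat$-action of $\algA$ on $V$ and by freeness to $\E(h;f^+,f^-,\bar f^+,\bar f^-)$. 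The only relations to check are those in \eqref{eq:algaf1}, and they hold precisely because $m^\pm,\bar m^\pm$ are $\algA$-intertwiners of the stated weight. These two assignments are manifestly inverse to one another on objects; on morphisms both are the identity on underlying linear maps, so functoriality is immediate.

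\medskip

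\noindent\textbf{Monoidality and the main obstacle.} The non-trivial point is to match the tensor products. On the $\algA_F$ side, $f^+$ acts on a tensor product $V\ot W$ as $\rho_V(f^+)\ot\id_W+\rho_V(e^{\kappa\omega h})\ot\rho_W(f^+)$ by \eqref{eq:def-coproduct-A_F}. On the $\Cat_F$ side, the formula \eqref{eq:Tpmorph} for $T(m,n)$ restricted to the $\C_\omega$-summand of $F^+=\iota^+(F_\omega)$ reads $m^+\ot\id_W+(\id_V\ot n^+)\circ(\varphi_{F^+,V}|_{\C_\omega}\ot\id_W)$. Because $F^+=\iota^+(F_\omega)$, the half-braiding is the braiding of $\Cat$, which by \eqref{eq:braiding-on-simples} and the R-matrix \eqref{eq:C-gr-vsp-Rmat} acts on $\C_\omega\ot V$ as $\tau\circ(e^{\kappa\omega h}\text{ on }V)$; hence the two expressions coincide. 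The analogous comparison for $f^-$ gives the $e^{-\kappa\omega h}$ factor, and for $\bar f^\pm$, since $F^-=\iota^-(F_\omega)$ uses the inverse braiding $c^{-1}_{-,F^-}$, the factor becomes $e^{\mp\kappa\omega h}$, matching \eqref{eq:def-coproduct-A_F}. The main obstacle is precisely keeping track of these signs and of $c$ versus $c^{-1}$ in the two halves $F^+,F^-$ of $F$, together with verifying that $\algA_F$ is indeed a bialgebra (i.e.\ $\Delta$ is algebra-multiplicative on the generators modulo \eqref{eq:algaf1}), which is a direct check. Units are trivial, and associators and unit isomorphisms on both sides are those of $\Vect$, so once the compatibility of $\Delta$ with $T(-,-)$ is established the monoidal isomorphism follows.
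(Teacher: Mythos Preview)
Your proposal is correct and follows essentially the same route as the paper: invoke Proposition~\ref{prop:tensormod} for the first isomorphism, build mutually inverse functors between $\Cat_F$ and $\RepAss{\algA}(\algA_F)$ by packaging/unpackaging the four generators, and verify monoidality by matching the coproduct~\eqref{eq:def-coproduct-A_F} against the formula~\eqref{eq:Tpmorph} using the explicit braiding~\eqref{eq:braiding-on-simples}. The one place where the paper is slightly more careful is in the reverse construction: since $\algA_F=\E(h;f^\pm,\bar f^\pm)$ is a topological completion and not merely a free algebra, ``freeness'' alone does not extend the action---the paper appeals to Proposition~\ref{prop:apx_algE}, which uses the diagonalisability of $h$ on $V$ to guarantee that the representation of the free algebra lifts continuously to the completion.
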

\begin{proof}
The first isomorphism was the content of Proposition \ref{prop:tensormod}. 
For the second we define a functor 
\begin{align}
 \fcAlgPtUnc: \RepAss{\algA}(\algA_F) \to \Cat_F
\end{align}
as follows: If $V = (V,\rho)$ 
is an object 
of $\RepAss{\algA}(\algA_F)$, then 
let $m: F \ot V \to V$ be the map 
\begin{align} \label{eq:def-of-m}
m(\phi \ot v) 
	:= \rho(\phi^+ \cdot f^+ + \phi^- \cdot f^- 
		+ \bar \phi^+ \cdot \bar f^+ + \bar \phi^- \cdot \bar f^-).v  \ ,
\end{align} 
where 
$\phi = \phi^+ \oplus \phi^- \oplus \bar \phi^+ \oplus \bar \phi^- 
	\in \iota^+(\C_\omega \oplus \C_{-\omega}) \oplus \iota^-(\C_\omega \oplus \C_{-\omega}) 
	= F$ 
and on the right hand side, $\phi^+, \phi^-, \bar \phi^+, \bar \phi^-$ 
are merely considered as complex numbers. 
The map $m$ is an intertwiner of $\algA$-modules: 
\begin{align} \label{eq:m-morph-of-h-mods}
&\rho(h).m(\phi \ot v) 
	= \rho(\phi^+ \cdot [h,f^+] + \phi^- \cdot [h,f^-] 
		+ \bar \phi^+ \cdot [h,\bar f^+] + \bar \phi^- \cdot [h,\bar f^-]).v \nonumber\\
	&\qquad\qquad + m(\phi \ot (\rho(h).v)) \nonumber\\
	&\qquad= \rho(\omega \phi^+ \cdot f^+ - \omega \phi^- \cdot f^- 
		+ \omega \bar \phi^+ \cdot \bar f^+ - \omega \bar \phi^- \cdot \bar f^-).v 
		+ m(\phi \ot (\rho(h).v)) \nonumber\\
	&\qquad= m(\chi(h).\phi) \ot v) + m(\phi \ot (\rho(h).v)) 
	= m((\chi \ot \rho)(\Delta(h))(\phi \ot v)) \ , 
\end{align}
where $\chi$ denotes the representing homomorphism $\chi: \algA \to \End(F)$ 
and $\phi \in F$ as above. 
Hence, $\fcAlgPtUnc(V) := (V,m)$ 
is an object of $\Cat_F$. 
On morphisms $\fcAlgPtUnc$ is the identity.

For the converse direction observe that a morphism 
    $m: F \ot V \to V$ 
in particular determines four 
linear maps on the vector space $V$, namely 
$v \mapsto m((1 \oplus 0 \oplus 0 \oplus 0) \ot v)$, etc. 
Together with the action of $h$ they determine an action 
of the free algebra with generators $h, f^\pm, \bar f^\pm$. 
By Proposition \ref{prop:apx_algE} and diagonalisability of the 
action of $h$, it lifts to a representation 
of the completion $\algA_F=\E(h; f^\pm, \bar f^\pm)$. 
Then \eqref{eq:m-morph-of-h-mods} shows 
that \eqref{eq:algaf1} holds. Hence, one obtains an object in 
$\RepAss{\algA}(\algA_F)$. 

This shows that $\fcAlgPtUnc$ is an equivalence.
To see monoidality, let 
$V = (V,\rho_V)$, $W = (W,\rho_W)$ be 
two objects in $\RepAss{\algA}(\algA_F)$. 
Write $\fcAlgPtUnc(V) = (V, m_V)$, $\fcAlgPtUnc(W) = (W, m_W)$ and 
$\fcAlgPtUnc(V \ot W) = (V \ot W, m_{V \ot W})$, and let
$\mathcal R_\algA^{\pm 1} = \sum_j (\mathcal R_\algA^{\pm 1})^{(1)}_j \ot (\mathcal R_\algA^{\pm 1})^{(2)}_j$ be a decomposition of 
$\mathcal R_\algA^{\pm 1} = e^{\pm\kappa h \ot h}$ 
into elementary tensors. 
The action of 
$\phi = \phi^+ \oplus \phi^- \oplus \bar \phi^+ \oplus \bar \phi^- \in F$ 
on $v \ot w \in \fcAlgPtUnc(V,\rho_V) \ot \fcAlgPtUnc(W,\rho_W)$ 
is given by 
\begin{align}
&\tpmorph(m_V,m_W)(\phi \ot v \ot w) 
 \nonumber\\
& \stackrel{\eqref{eq:Tpmorph}}{=} 
	m_V(\phi \ot v) \ot w 
		+ ((\id_V \ot m_W) \circ (\varphi_{F,V} \ot \id_W))(\phi \ot v \ot w) \nonumber\\
&\stackrel{\eqref{eq:def-of-m}}{=} \rho_V(\phi^+ f^+ + \phi^- f^- 
		+ \bar \phi^+ \bar f^+ + \bar \phi^- \bar f^-).v \ot w \nonumber\\
	&\qquad + \sum_j \rho_V\left(\left(\mathcal R_\algA\right)^{(2)}_j\right).v 
	\ot \rho_W\left( 
	\chi\left(\left(\mathcal R_\algA\right)^{(1)}_j\right).(\phi^+ f^+ + \phi^- f^-)\right).w \nonumber\\
	&\qquad + \sum_j \rho_V\left(\left(\mathcal R_\algA^{-1}\right)^{(1)}_j\right).v 
	\ot \rho_W\left( 
	\chi\left(\left(\mathcal R_\algA^{-1}\right)^{(2)}_j\right).(\bar \phi^+ \bar f^+ + \bar \phi^- \bar f^-)\right).w \nonumber\\
&= \rho_V(\phi^+ f^+ + \phi^- f^- 
		+ \bar \phi^+ \bar f^+ + \bar \phi^- \bar f^-).v \ot w \nonumber\\
	&\qquad + \rho_V\left(e^{\kappa\omega h}\right).v 
		\ot \rho_W\left( \phi^+ f^+ \right).w 
		+ \rho_V\left(e^{- \kappa\omega h}\right).v 
		\ot \rho_W\left( \phi^- f^- \right).w \nonumber\\
	&\qquad + \rho_V\left(e^{-\kappa\omega h}\right).v 
		\ot \rho_W\left( \bar \phi^+ \bar f^+ \right).w
		+ \rho_V\left(e^{\kappa\omega h}\right).v 
		\ot \rho_W\left( \bar \phi^- \bar f^- \right).w \nonumber\\
&= (\rho_V \ot \rho_W)(\Delta(\phi^+ f^+ + \phi^- f^- 
		+ \bar \phi^+ \bar f^+ + \bar \phi^- \bar f^-)).(v \ot w) \ ,
\end{align}
The last line is precisely $m_{V \ot W}$, completing the proof of monoidality. 
\end{proof}

\subsection{Conventions for $\tilde U_\hbar(\lsl_2)$ and $\tilde U_\hbar(L\lsl_2)$}
\label{ssec:unc_U_h}

We will need two versions of quantum groups, one which uses the exponential $e^{\hbar h}$ for a number $\hbar$ and the generator $h$, and one which uses generators $k^{\pm1}$ instead. The former will be used in the treatment of the uncompactified free boson and we recall its definition presently. The generators $k^{\pm1}$ appear in the application to the compactified free boson in Section \ref{sec:compboson}.

For $q \in \C^\times \setminus \{-1,1\}$ let $[n]_q := \frac{q^n - q^{-n}}{q - q^{-1}}$. 

\begin{df}[{cf.\ \cite[Def.-Prop.\,6.5.1]{CP94}}] \label{df:qgrps_uncomp}
	Let $\hbar \in \C \setminus i\pi\Z$ and set $q := e^\hbar$. 
\begin{enumerate}[(i)]
	\item $\tilde U_\hbar(\lsl_2)$ is 
	the topological Hopf algebra 
		which is the quotient of $\E(h;e^+,e^-)$ 
		modulo the closure of the 
		two-sided ideal generated by the relations 
		\begin{align} 
		 	[h,e^\pm] = \pm2e^\pm \ , \qquad 
			[e^+,e^-] = \frac{e^{\hbar\cdot h} - e^{-\hbar\cdot h}}{e^\hbar - e^{-\hbar}} \ ,
		\end{align}
		together with the coproduct and antipode given by 
		\begin{gather} 
		\Delta(h) = h \ot \one + \one \ot h \ , \\
		\Delta(e^+) = e^+ \ot e^{\hbar \cdot h} + \one \ot e^+ \ , \qquad 
		\Delta(e^-) = e^- \ot \one + e^{-\hbar \cdot h} \ot e^- \ , \nonumber\\ 
		S(h) = -h \ , \qquad S(e^+) = -e^+e^{-\hbar \cdot h} \ , \qquad S(e^-) = -e^{\hbar \cdot h}e^- \ . \nonumber
		\end{gather}
	\item $\tilde U_\hbar(L\lsl_2)$ is the topological Hopf algebra 
		which is the quotient of $\E(h_0,h_1; e_0^\pm, e_1^\pm)$ modulo the 
		closure of the two-sided ideal generated by the relations 
		($i,j=0,1$) 
		\begin{gather} \label{eq:def-relns-U_a}
			h_0 = -h_1, \qquad [h_i,e_i^\pm] = \pm 2e_i^\pm \ , \qquad 
			[e_i^+,e_j^-] = \delta_{i,j}\frac{e^{\hbar\cdot h_i} - e^{-\hbar\cdot h_i}}{e^\hbar - e^{-\hbar}} \ , \\
		(e_i^+)^3e_j^+ - [3]_{q} (e_i^+)^2e_j^+e_i^+ + [3]_{q} e_i^+e_j^+(e_i^+)^2 - e_j^+(e_i^+)^3 = 0 \ , \label{eq:def-U_a-Serre-relns}\nonumber\\
		(e_i^-)^3e_j^- - [3]_{q} (e_i^-)^2e_j^-e_i^- + [3]_{q} e_i^-e_j^-(e_i^-)^2 - e_j^-(e_i^-)^3 = 0 \ , \nonumber
		\end{gather}
		together with the coproduct and antipode given by 
		\begin{gather} \label{eq:def-coalgebra-structure-U_a}
		\Delta(h_0) = h_0 \ot \one + \one \ot h_0 \ , \\
		\Delta(e_i^+) = e_i^+ \ot e^{\hbar \cdot h_i} + \one \ot e_i^+ \ , \qquad 
		\Delta(e_i^-) = e_i^- \ot \one + e^{-\hbar \cdot h_i} \ot e_i^- \ , \nonumber\\ 
		S(h_0) = -h_0 \ , \qquad S(e_i^+) = -e_i^+e^{-\hbar \cdot h_i} \ , \qquad S(e_i^-) = -e^{\hbar \cdot h_i}e_i^- \ . \nonumber
		\end{gather}
	\item 
		Denote by $\tilde U_\hbar(L\lsl_2)^+, \tilde U_\hbar(L\lsl_2)^-$ 
		the sub(-Hopf-)algebras of $\tilde U_\hbar(L\lsl_2)$ 
		generated by $e_0^+, e_1^+, h_0$ and $e_0^-,e_1^-,h_0$, respectively.
\end{enumerate}
\end{df}

   Below we will relate representations of $\tilde U_\hbar(L\lsl_2)$ to the category $\Cat_F$ discussed in the previous section. As argued in Section \ref{sec:PertDef2D_FT}, the objects in $\Cat_F$ describe non-local conserved charges, and these charges commute if the corresponding elements in the Grothendieck ring $K_0(\Cat_F)$ commute. In this context, the following theorem will be useful.

\begin{thm} \label{thm:UqCommGrothUnc}
 Let $\hbar \in \C^\times$ and assume that $q := e^\hbar$ is not a root of unity. 
 The Grothendieck ring $\Gr\big(\Repfd(\tilde U_\hbar(L\lsl_2))\big)$ of the category 
 of finite dimensional $\tilde U_\hbar(L\lsl_2)$-modules is commutative.
\end{thm}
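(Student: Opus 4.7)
The plan is to establish $[V][W]=[W][V]$ for arbitrary finite-dimensional simples $V$ and $W$ of $\tilde U_\hbar(L\lsl_2)$ by exhibiting an isomorphism of Jordan--H\"older composition series between $V\otimes W$ and $W\otimes V$; commutativity of $K_0\big(\Repfd(\tilde U_\hbar(L\lsl_2))\big)$ then follows by bilinearity.

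The first step is to introduce spectral-parameter shifts. For any $a\in\C^\times$ I would verify that the assignment
\begin{align*}
\tau_a(h_0)=h_0,\qquad \tau_a(e_i^\pm)=a^{\pm 1}\,e_i^\pm\quad (i=0,1),
\end{align*}
extends to a Hopf algebra automorphism of $\tilde U_\hbar(L\lsl_2)$. The commutation relations \eqref{eq:def-relns-U_a} are preserved because the Cartan-like terms on the right are fixed by $\tau_a$ while $e_i^+ e_j^-$ is homogeneous of scaling weight $a\cdot a^{-1}=1$; the Serre relations \eqref{eq:def-U_a-Serre-relns} are homogeneous of degree four in the $e^\pm$ and so rescale by an overall factor; and compatibility with the coproduct in \eqref{eq:def-coalgebra-structure-U_a} is immediate from the formulas. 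Pulling back along $\tau_a$ turns any finite-dimensional module $V$ into an algebraic family $V(a):=\tau_a^*V$ of modules of the same underlying dimension.

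The second and key step is to invoke the Chari--Pressley classification of finite-dimensional simple $\tilde U_\hbar(L\lsl_2)$-modules by Drinfeld polynomials, together with the standard fact that at generic spectral parameter the tensor product of two evaluation simples is itself simple and independent of the order of factors up to isomorphism (cf.\ \cite{CP94}). Concretely, for any two finite-dimensional simples $V$ and $W$ there is a cofinite subset $U\subset\C^\times$ such that for every $a\in U$ both $V\otimes W(a)$ and $W(a)\otimes V$ are simple with the same Drinfeld polynomial (the product of the two individual polynomials), and hence are isomorphic. The hypothesis that $q$ is not a root of unity is essential here: it ensures that simples are classified by Drinfeld polynomials and that generic tensor products of evaluation modules remain irreducible.

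To transport this generic isomorphism to the special value $a=1$, I would argue that the Jordan--H\"older multiplicity $[V\otimes W(a):L]$ of any given simple $L$ depends on $a$ in a constructible (in particular upper-semicontinuous) manner, being cut out by algebraic conditions on the action of the generators. Since on the cofinite set $U$ the two orderings produce isomorphic modules, their composition factor multiplicities coincide there; upper semicontinuity together with the constancy of the total dimension forces equality of multiplicities for every $a\in\C^\times$, and specialising at $a=1$ yields $[V\otimes W]=[W\otimes V]$ in $K_0$. The main obstacle I foresee is the justification of the generic-irreducibility statement, which is genuine input from the theory of quantum loop algebras rather than a formal consequence of the Hopf algebra structure; an alternative route, which sidesteps Drinfeld polynomials altogether, would be to appeal to Frenkel--Reshetikhin's $q$-character homomorphism, which is known to be an injective ring map from $K_0$ into the commutative Laurent polynomial ring $\Z[Y_{i,a}^{\pm 1}]$ and thus forces commutativity of the source directly.
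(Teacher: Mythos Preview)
Your proposal differs substantially from the paper's argument, and while the strategy is reasonable in spirit, it has a gap that the paper's proof is precisely designed to fill.

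All of the heavy machinery you invoke --- the Chari--Pressley classification by Drinfeld polynomials, generic irreducibility of tensor products of evaluation modules, and Frenkel--Reshetikhin's $q$-character homomorphism --- is established in the literature for $U_q(L\lsl_2)$, not for $\tilde U_\hbar(L\lsl_2)$. These are different Hopf algebras: $\tilde U_\hbar$ has the primitive Cartan generator $h_0$ rather than the group-like $k$, and its simple modules are \emph{not} parametrised by Drinfeld polynomials alone. Indeed, the one-dimensional modules $X_m$ on which $e_j^\pm$ act by zero and $h_0$ acts by $2\pi i m/\hbar$ (for $m\in\Z$) are pairwise non-isomorphic simples of $\tilde U_\hbar$ that all become the trivial module when restricted to $U_q$. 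Your proposal does not account for these extra simples, and your citation of \cite{CP94} for a classification of $\tilde U_\hbar$-simples is not justified as stated.

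The paper's proof handles exactly this issue. It constructs the Hopf algebra map $U_q\to\tilde U_\hbar$ given by $k_j\mapsto e^{\hbar h_j}$, $e_j^\pm\mapsto e_j^\pm$, which induces a ring homomorphism $g:\Gr(\Repfd(\tilde U_\hbar))\to\Gr(\Repfd(U_q))$ together with a one-sided inverse $s$ (built by choosing the canonical logarithm of $k_0$ on each $k_0$-eigenspace). One then observes that every simple $\tilde U_\hbar$-module $X$ satisfies $[X]=[X_m]\cdot s(g([X]))$ for a unique $m\in\Z$, and that the classes $[X_m]$ are central in $\Gr(\Repfd(\tilde U_\hbar))$. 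Commutativity then follows immediately from the known commutativity of $\Gr(\Repfd(U_q))$ (which is \cite[Corollary~2]{FR98}, i.e.\ precisely the $q$-character argument you mention). In short, the paper reduces the statement for $\tilde U_\hbar$ to the known statement for $U_q$ by a short algebraic comparison, rather than attempting to redevelop the Drinfeld-polynomial or $q$-character theory for the topological algebra $\tilde U_\hbar$. Your spectral-shift and semicontinuity argument is unnecessary once this reduction is in place.
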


The proof of this theorem is given in Appendix \ref{apx:B}. 
Our proof relates the statement to the known analogous statement on the Grothendieck 
ring of $U_q(L\lsl_2)$ (see Definition \ref{df:qgrps_comp} below), 
hence the restriction to the case of $q$ being not a root of unity. 

\begin{rem} \label{rem:unc-Groth-ring}
Note that the Grothendieck ring of $\Rep(\tilde U_\hbar(L\lsl_2))$ 
itself is trivial, as $\Rep(\tilde U_\hbar(L\lsl_2))$ 
contains infinite direct sums. Namely, let 
$V$ be a representation of $\tilde U_\hbar(L\lsl_2)$. Then so is $W = \bigoplus_{k=1}^\infty V$ 
and we have an exact sequence $0 \to V \to W \to W \to 0$. Hence, the 
classes $[V],[W] \in \Gr(\Rep(\tilde U_\hbar(L\lsl_2)))$ 
satisfy $[W] = [V] + [W]$, 
which implies $[V] = 0$. 
The easiest way to avoid this is to restrict to finite-dimensional representations.
\end{rem}

\subsection{The relation of $\tilde U_\hbar(L\lsl_2)$ to Yetter-Drinfeld modules for the bulk field $b$}
\label{ssec:unc_U_h-YD}

We come to the main result of this section. 
We show that the subcategory of $\Cat_F$ 
consisting of objects satisfying the commutation 
condition with $\bF_{\zeta,\xi}$ and which 
descend to modules of the Nichols algebras of 
$F^+$ and $F^-$ (see Section \ref{sec:comm_cond_YD}) 
is isomorphic to the category 
of representations of the quantum group 
$\tilde U_\hbar(L\lsl_2)$, at least for generic values of $\hbar$ (cf.\ Remark \ref{rem:Lusztig-f} below). 
The relation between the parameters is
\begin{align}
  \hbar = -\frac{\kappa\omega^2}2 \ ,
  \label{eq:uncomp-hbar-def}
\end{align}
where $\kappa$ controls the braiding on $\Cat$ via the R-matrix \eqref{eq:C-gr-vsp-Rmat} and $\omega$ defines the representation of the perturbing fields in $F$ as in \eqref{eq:uncomp-free-boson-F}; $\hbar$ is independent of $\zeta,\xi$ and the minus sign is a convention.
        
Having in mind our observation in Theorem \ref{thm:comp_eqiv_YD} 
relating the commutation condition to the 
Yetter-Drinfeld condition, 
this is not a surprising result mathematically: 
It was pointed out in \cite{Ma95b} 
that representation categories of $q$-deformed 
enveloping algebras can be obtained from 
categories of Yetter-Drinfeld modules of 
Nichols algebras in some braided categories. 
Moreover, \cite{Lus94} constructed the Borel-like 
subalgebras of $q$-deformed enveloping algebras 
by means of Nichols algebras in some braided 
categories, cf.\ \cite{Schau96}. 

\medskip

Fix $\zeta,\xi \in \C^\times$ for the remainder of this section. 
Let $V = (V,\rho)$ be an object of $\RepAss{\algA}(\algA_F)$ and write 
$(V,m) = \fcAlgPtUnc(V)$ for the corresponding object in $\Cat_F$, where  $\fcAlgPtUnc$ is 
the equivalence defined in Theorem \ref{thm:algA_F}.
The commutation condition for $(V,m)$ and $\bF_{\zeta,\xi}$ reads 
\begin{gather}
 \rho(f^+ \bar f^- - e^{-\kappa\omega^2} \bar f^- f^+) 
 	= \zeta \cdot \left( \id_V - \rho(e^{2\kappa\omega h}) \right) \ , \nonumber\\
 \rho(f^- \bar f^+ - e^{-\kappa\omega^2} \bar f^+ f^-) 
 	= \xi \cdot \left( \id_V - \rho(e^{-2\kappa\omega h}) \right) \ , \nonumber\\
 \rho(f^+ \bar f^+ - e^{\kappa\omega^2} \bar f^+ f^+) = 0 
 	= \rho(f^- \bar f^- - e^{\kappa\omega^2} \bar f^- f^-)  \ . 
 \label{eq:unc-commcond-1}
\end{gather}
For example, the first equation is obtained as follows. 
We need to satisfy \eqref{eq:compatible}, which is an equation for $\Cat$-homomorphisms 
$F^+ \ot F^- \ot V \to V$, that is, for grade preserving linear maps $F^+ \ot F^- \ot V \to V$. 
It must hold, in particular, on the vector 
$\phi^+ \ot \phi^- \ot v := (1 \oplus 0) \ot (0 \oplus 1) \ot v \in F^+ \ot F^- \ot V 
	= (\C_\omega \oplus \C_{-\omega}) \ot (\C_\omega \oplus \C_{-\omega}) \ot V$. 
On that vector we calculate for the left hand side of 
\eqref{eq:compatible}: 
\begin{align}
	&\big\{ m_\Fc \circ (\id_{\Fc} \ot m_\Fa) 
		- m_\Fa \circ (\id_{\Fa} \ot m_\Fc) \circ 
			(\varphi_{\Fc,\Fa} \ot \id_X)\big\}
		(\phi^+ \ot \phi^- \ot v) \nonumber\\
	&\quad = \rho(f^+)\rho(\bar f^-).v 
		- e^{-\kappa \omega^2}\big\{ m_\Fa \circ (\id_{\Fa} \ot m_\Fc)\big\}
			(\phi^- \ot \phi^+ \ot v) \nonumber\\
	&\quad = \rho(f^+)\rho(\bar f^-).v - e^{-\kappa \omega^2}\rho(\bar f^-)\rho(f^+).v \ , 
\end{align}
where we have used 
  $m_\Fc ((1 \oplus 0) \ot v) = \rho(f^+).v$ and 
  $m_\Fa ((0 \oplus 1) \ot v) = \rho(\bar f^-).v$, 
see the definition of $m$ in 
Theorem \ref{thm:algA_F}. 
The right-hand-side of \eqref{eq:compatible} -- evaluated on 
$\phi^+ \ot \phi^- \ot v$ -- is 
\begin{align}
	&\big\{ \bF_{\zeta,\xi} \ot \id_V 
		- (\id_V \ot \bF_{\zeta,\xi}) \circ \varphi_{\Fc \ot \Fa,V} \big\} 
			(\phi^+ \ot \phi^- \ot v) \nonumber\\
	&\quad= \zeta \cdot v 
		-  (\id_V \ot \bF_{\zeta,\xi}) 
		\big(\rho(e^{2\kappa \omega h}).v \ot \phi^+ \ot \phi^-\big)
	= \zeta \cdot \big( v - \rho(e^{2\kappa\omega h}).v \big),
\end{align}
where we used 
$\varphi_{\Fc \ot \Fa,V} = (c_{\Fc,V} \ot \id_\Fa) \circ (\id_\Fc \ot c_{V,\Fa}^{-1})$ 
and $c_{\Fc,V}(\phi^+ \ot v) = \rho(e^{\kappa \omega h}).v \ot \phi^+$, 
as well as 
$c_{V,\Fa}^{-1}(\phi^- \ot v) = \rho(e^{\kappa \omega h}).v \ot \phi^-$. 

By acting with $e^{\pm \kappa\omega h}$ on the 
conditions in \eqref{eq:unc-commcond-1} one checks that these conditions can equivalently be obtained by applying $\rho$ to the following expressions in terms of commutators:
\begin{gather} \label{eq:algAF_comm_cond}
 [f^+ e^{-\kappa\omega h}, \bar f^-] 
 	= \zeta e^{\kappa\omega^2} \cdot 
 		\left( e^{-\kappa\omega h} - e^{\kappa\omega h} \right) \ , \\
 [f^- e^{\kappa\omega h}, \bar f^+] 
 	= \xi e^{\kappa\omega^2} \cdot 
 		\left( e^{\kappa\omega h} - e^{-\kappa\omega h} \right) \ , \nonumber\\
 [f^+ e^{-\kappa\omega h}, \bar f^+] = 0 
 	= [f^- e^{\kappa\omega h}, \bar f^-] \ . \nonumber
\end{gather}
Define the bialgebra $\widetilde{\algA_F}$ as the quotient of $\algA_F$ by the relations \eqref{eq:algAF_comm_cond}. By construction, a representation 
of $\algA_F$ satisfies the commutation condition with $\bF_{\zeta,\xi}$ if and 
only if it descends to a representation of  $\widetilde{\algA_F}$.
So, pulling back representations along the canonical 
projection $\algA_F \to \widetilde{\algA_F}$, we can 
identify $\RepAss{\algA}(\widetilde{\algA_F})$ 
with the full subcategory of 
$\Cat_F \cong \RepAss{\algA}(\algA_F)$ 
consisting of objects satisfying the commutation condition 
with $\bF_{\zeta,\xi}$. 
This implies, by Theorem \ref{thm:YD-com-rel-mon}, that we 
have an isomorphism 
\begin{align}
\RepAss{\algA}(\widetilde{\algA_F}) 
	\cong \talg(F^+)\YD{\rho}\talg(F^-)
\end{align}
	of monoidal categories. 
Here, $\rho$ is the Hopf pairing $\talg(F^+) \ot \talg(F^-) \to \one$ 
induced by $\bF_{\zeta,\xi}: F^+ \ot F^- \to \one$ 
according to Lemma~\ref{lem:extHopf}. 

\medskip

We are now going to relate 
$\RepAss{\algA}(\widetilde{\algA_F})$ 
to representations of the quantum group $\tilde U_\hbar(L\lsl_2)$. Define the bialgebra homomorphism $\psi : \algA_F \to  \tilde U_\hbar(L\lsl_2)$ as
\begin{gather} \label{eq:algAF-pert-bialghomom}
 h \mapsto \frac \omega 2 h_0 \ , \qquad f^+ \mapsto e_0^+e^{-\hbar \cdot h_0} \ , \qquad f^- \mapsto \xi \frac{e^\hbar - e^{-\hbar}}{e^{2\hbar}} e_1^+e^{-\hbar \cdot h_1} \ , \\
 \bar f^+ \mapsto e_1^- \ , \qquad \bar f^- \mapsto \zeta \frac{e^\hbar - e^{-\hbar}}{e^{2\hbar}} e_0^- \ . \nonumber
\end{gather}
It is easily checked that $\psi$ is compatible with the algebra and 
coalgebra structures of $\algA_F$. For example, 
\begin{align}
\psi([h,f^+]) &= \frac\omega 2 [h_0,e_0^+ e^{-\hbar \cdot h_0}] 
\stackrel{\eqref{eq:def-relns-U_a}}= \omega e_0^+ e^{-\hbar \cdot h_0} = \psi(\omega f^+) \ , \\
(\psi\ot\psi)\Delta(f^+) &\stackrel{\eqref{eq:def-coproduct-A_F}}= 
	(\psi\ot\psi)(f^+ \ot \one + e^{\kappa\omega h} \ot f^+) 
	= e_0^+e^{-\hbar \cdot h_0} \ot \one 
		+ e^{\frac{\kappa\omega^2}2 h_0} \ot e_0^+e^{-\hbar \cdot h_0} \nonumber\\
&= (e_0^+ \ot e^{\hbar \cdot h_0} + \one \ot e_0^+)
		(e^{-\hbar \cdot h_0} \ot e^{-\hbar \cdot h_0}) 
	\stackrel{\eqref{eq:def-coalgebra-structure-U_a}}= 
		\Delta(e_0^+)\Delta(e^{-\hbar \cdot h_0})
	= \Delta\psi(f^+) \ . \nonumber
\end{align} 
Moreover, $\psi$ respects the relations \eqref{eq:algAF_comm_cond}, e.g. 
\begin{align}
	&\psi([f^+ e^{-\kappa\omega h}, \bar f^-]) \stackrel{\eqref{eq:algAF-pert-bialghomom}}{=} 
		\zeta \frac{e^\hbar - e^{-\hbar}}{e^{2\hbar}} 
		[e_0^+ e^{-\hbar h_0} e^{\hbar h_0}, e_0^-] 
	\stackrel{\eqref{eq:def-relns-U_a}}{=} 
		\zeta e^{-2\hbar} \left(e^{\hbar h_0} - e^{-\hbar h_0}\right) \nonumber \\
	&\quad= \zeta e^{\kappa\omega^2} 
		\psi\big(e^{-\kappa \omega h} - e^{\kappa \omega h}\big) \ .
\end{align}
This results in a surjective bialgebra homomorphism 
$\widetilde{\algA_F} \to \tilde U_\hbar(L\lsl_2)$. Consequently, the pullback functor 
\begin{align}
 \F_{\zeta,\xi} \,:\, \RepAss{\langle h_0 \rangle}(\tilde U_\hbar(L\lsl_2)) 
 	~\longrightarrow~ \RepAss{\algA}(\widetilde{\algA_F}) \cong \Cat_F \ ,
	\label{eq:F-ze-xi_def}
\end{align}
where $\langle h_0 \rangle$ denotes the (topological) subalgebra of 
$\tilde U_\hbar(L\lsl_2)$ generated by $h_0$,
is full, faithful, exact and monoidal. Altogether we proved:

\begin{thm}
\label{thm:exrepcatfuncmon_uncpt1}
The functor $\F_{\zeta,\xi}$ identifies the monoidal category
	$\RepAss{\algA}(\tilde U_\hbar(L\lsl_2))$ 
	with a monoidal full subcategory of 
	$\talg(F^+)\YD{\rho}\talg(F^-)$. 
\end{thm}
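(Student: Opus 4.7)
The plan is to assemble what the preceding discussion has essentially established: (i) $\psi$ in \eqref{eq:algAF-pert-bialghomom} is a bialgebra homomorphism $\algA_F \to \tilde U_\hbar(L\lsl_2)$ which factors through the quotient $\algA_F \twoheadrightarrow \widetilde{\algA_F}$ and is surjective; (ii) the resulting pullback $\F_{\zeta,\xi}$ is therefore a fully faithful, exact, monoidal functor; and (iii) its codomain identifies with a full monoidal subcategory of $\talg(F^+)\YD{\rho}\talg(F^-)$ via Theorem \ref{thm:YD-com-rel-mon}.

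For (i), the sample computations shown before the theorem cover one algebra relation, one coproduct relation, and one of the four commutation-condition relations. The remaining algebra and coproduct relations for $f^-$, $\bar f^\pm$ are structurally identical to those for $f^+$; the defining identity $\hbar = -\kappa\omega^2/2$ from \eqref{eq:uncomp-hbar-def} converts each $e^{\pm\kappa\omega h}$ appearing in $\Delta_{\algA_F}$ into the corresponding $e^{\pm\hbar h_i}$ appearing in $\Delta_{\tilde U_\hbar(L\lsl_2)}$, and the scalar prefactors $\zeta, \xi$ pose no difficulty since they are grouplike constants. The remaining commutation-condition check for $[f^- e^{\kappa\omega h}, \bar f^+]$ mirrors the given one, while $[f^\pm e^{\mp\kappa\omega h}, \bar f^\pm] = 0$ follows at once from $[e_i^+, e_j^-] = 0$ for $i \neq j$ in \eqref{eq:def-relns-U_a}. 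Surjectivity follows because $\psi(h) = (\omega/2) h_0$ recovers $h_0$ and hence the exponentials $e^{\pm\hbar h_0}$, after which the four images $\psi(f^\pm), \psi(\bar f^\pm)$ produce $e_0^\pm, e_1^\pm$ by scalar rescaling and by multiplication by an exponential of $h_0$.

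Step (ii) is formal: pullback along any surjective bialgebra map is always fully faithful, exact and monoidal, and the semisimplicity conditions align because $\psi(h) = (\omega/2) h_0$ makes $h$- and $h_0$-semisimplicity coincide (so $\RepAss{\algA}(\tilde U_\hbar(L\lsl_2))$ and $\RepAss{\langle h_0 \rangle}(\tilde U_\hbar(L\lsl_2))$ are the same category). For step (iii), Theorem \ref{thm:algA_F} identifies $\RepAss{\algA}(\algA_F)$ with $\Cat_F$, the further quotient $\widetilde{\algA_F}$ cuts this down to the full subcategory of $\Cat_F$ satisfying the commutation condition with $\bF_{\zeta,\xi}$, and Theorem \ref{thm:YD-com-rel-mon} in turn realises that subcategory as a full monoidal subcategory of $\talg(F^+)\YD{\rho}\talg(F^-)$. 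Composing yields $\F_{\zeta,\xi}$ as the desired embedding.

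The main technical delicacy is the topological completion: $\psi$ must extend continuously from the free algebra on $\{h, f^\pm, \bar f^\pm\}$ to $\algA_F$, and the exponentials $e^{\pm\hbar h_0}$ appearing in the image must be interpreted inside the corresponding completion of $\tilde U_\hbar(L\lsl_2)$. This is handled by the universal property of Proposition \ref{prop:apx_algE}, already invoked to construct $\algA_F$; since $\psi(h)$ is a scalar multiple of $h_0$ and the defining relations \eqref{eq:algaf1} translate verbatim to identities in the target, the continuous extension of $\psi$ exists and is unique.
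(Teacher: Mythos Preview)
Your proposal is correct and follows essentially the same approach as the paper: the paper's proof is the single sentence ``Altogether we proved:'' after the discussion that defines $\psi$, checks it is a bialgebra map respecting the relations \eqref{eq:algAF_comm_cond}, notes surjectivity, and invokes the earlier identification $\RepAss{\algA}(\widetilde{\algA_F}) \cong \talg(F^+)\YD{\rho}\talg(F^-)$ via Theorem \ref{thm:YD-com-rel-mon}. Your write-up spells out a few points the paper leaves tacit (surjectivity of $\psi$, the match of semisimplicity conditions, the topological extension), but the logical structure is identical; one small slip is that in step (iii) Theorem \ref{thm:YD-com-rel-mon} actually gives an \emph{isomorphism} with $\talg(F^+)\YD{\rho}\talg(F^-)$, not merely an embedding as a full subcategory --- the passage to a proper full subcategory happens only at the pullback along $\psi$.
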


In the case that $e^{\hbar} = e^{-\kappa\omega^2/2}$ is transcendental, we can describe the image of $\F_{\zeta,\xi}$ more explicitly in terms of Nichols algebras as introduced in the end of Section \ref{sec:comm_cond_YD}. This is explained in the next remark.

\begin{rem} \label{rem:Lusztig-f}
Following \cite{Lus94}, 
consider the field of fractions $\Q(v)$ for a formal variable $v$ and
let $\mathbf{'f}$ be the free associative algebra over $\Q(v)$ 
with 
generators $\theta_0,\theta_1$. By assigning degree $(1,0)$ to 
$\theta_0$ and degree $(0,1)$ to $\theta_1$, the algebra $\mathbf{'f}$ 
may be considered as an algebra object in the braided category $\Cat'$ of 
$\Z \times \Z$-graded vector spaces over $\Q(v)$. The braiding on two one-dimensional vector spaces of degrees $(m_1,m_2)$ and $(n_1,n_2)$ is multiplication by the element 
$v^{2(m_1-m_2)(n_1-n_2)}$ of $\Q(v)$. The Nichols algebra $\mathbf{f}$ of $\mathbf{'f}$ 
is given by the quotient of $\mathbf{'f}$ by the radical 
of the Hopf pairing described in \cite[Prop.\,1.2.3]{Lus94} 
and Lemma \ref{lem:extHopf}, see \cite[Ex.\,3.1]{Schau96}. 
It was shown in \cite[Thm.\,33.1.3(a)]{Lus94} that 
the Serre relations (the counterparts for $\mathbf{'f}$ of 
\eqref{eq:def-relns-U_a}), 
generate the radical of this pairing. 
If $e^{\kappa \omega^2/2}$ is transcendental there is a well-defined embedding $\Q(v) \to \C$
sending $v$ to $e^{\kappa \omega^2/2}$. Comparing the braiding on $\Cat'$ to \eqref{eq:braiding-on-simples} shows that we get
an exact, faithful (but non-full) braided monoidal functor from $\Cat'$ into our category 
$\Cat = \Repss(\algA)$ taking the simple object $\Q(v)_{(n_1,n_2)} \in \Cat'$ 
to the simple object $\C_{(n_1-n_2)\omega} \in \Cat$ and taking a morphism 
acting by multiplication with $f(v) \in \Q(v)$ on 
a simple object of $\Cat'$ to a morphism acting by 
multiplication with $f(e^{\kappa\omega^2/2}) \in \C$ on the corresponding simple 
object of $\Cat$. 
It takes $\mathbf{'f}$ to $\talg(F^+)$ and the ideal generated 
by the Serre relations in $\mathbf{'f}$ to the one generated 
by the Serre relations in \eqref{eq:def-relns-U_a} 
for 
$q = e^{\kappa \omega^2/2}$ (which are the same as those for $q^{-1}$). 
By exactness, the functor takes $\mathbf{f}$ to the Nichols algebra
$\B(F^+)$. Analogously, $\B(F^-)$ is the quotient 
of $\talg(F^-)$ by the ideal generated by the Serre relations. 
Altogether, the functor 
$\F_{\zeta,\xi}$ identifies $\RepAss{\algA}(\tilde U_\hbar(L\lsl_2))$ 
with the full subcategory $\B(F^+)\YD{\rho}\B(F^-)$ of 
$\talg(F^+)\YD{\rho}\talg(F^-)$.
\end{rem}

\subsection{Example: Properties of T and Q operators}
\label{ssec:uncomp_TQ-op}

Set $q = e^\hbar$ and denote by $V_n$, $n \in \N$, the representation of 
$\tilde U_\hbar(\lsl_2)$ where $h,e^+,e^-$ act on the vector space $\C^{n+1}$ 
by the matrices (see e.g.\ \cite[Ch. XVII.4]{Kas95}) 
\begin{gather} 
\rho(h){=}\left(\begin{smallmatrix} n & & & & \\ & n{-}2 & & & \\ & & \ddots & & \\ & & & {-}n{+}2 & \\ & & & & -n \end{smallmatrix}\right), \, 
\rho(e^+){=}\left(\begin{smallmatrix} 0 & [n]_q & & & \\ & 0 & [n{-}1]_q & & \\ & & \ddots & \ddots & \\ & & & 0 & 1 \\ & & & & 0 \end{smallmatrix}\right), \, 
\rho(e^-){=}\left(\begin{smallmatrix} 0 & & & & \\ 1 & 0 & & & \\ & \ddots & \ddots & & \\ & & [n{-}1]_q & 0 & \\ & & & [n]_q & 0 \end{smallmatrix}\right). 
\end{gather}
For $z \in \C^\times$, the evaluation homomorphism 
$\ev_z: \tilde U_\hbar(L\lsl_2) \to \tilde U_\hbar(\lsl_2)$ is given by (see, e.g.\ \cite{CP91}) 
\begin{align}
	\ev_z(e_0^\pm) = e^{\mp \hbar} z^{\pm 1} e^\mp, \quad
	\ev_z(e_1^\pm) = e^\pm, \quad
	\ev_z(h_0) = -h, \quad
	\ev_z(h_1) = h.
\end{align}
For an $\tilde U_\hbar(\lsl_2)$-module $V$, denote by $V(z)$ its 
pullback along the evaluation homomorphism $\ev_z$.

\begin{rem}
In Remark \ref{rem:comments-on-comm-cond}(ii) we saw that objects satisfying the commutation condition come in $\C^\times$-families. In the present example, this parameter is related to evaluation homomorphisms as follows. Recall the functor $\F_{\zeta,\xi}: \RepAss{\algA}(\tilde U_\hbar(L\lsl_2)) 
 	\to \RepAss{\algA}(\algA_F)$ from \eqref{eq:F-ze-xi_def}, where the constants $\zeta,\xi \in \C^\times$ give the bulk field morphism $b$ via \eqref{eq:uncom-b_ze-xi_def}.
Let $(X,m)$ be $\F_{\zeta,\xi}(V(1))$, considered as an object 
in $\Cat_F$ via Theorem \ref{thm:algA_F}, for some evaluation representation $V(1)$ of 
$\tilde U_\hbar(L\lsl_2)$. Then $(X,m \circ ((w\cdot\id) \oplus (w^{-1}\cdot\id)))$, 
$w \in\C^\times$, 
is isomorphic to $\F_{\zeta,\xi}(V(w^2))$. 
\end{rem}

For $q$ not a root of unity, $z \in \C^\times$ and a positive integer $n$, there is an exact sequence of representations \cite[Sect.\,4.9]{CP91} 
\begin{align} \label{eq:unc-T-exseq-1}
	0 \,\longrightarrow\, V_{n-1}(q^{n+2} z) \,\longrightarrow\, V_1(z) \ot V_n(q^{n+1}z) 
	\,\longrightarrow\, V_{n+1}(q^{n}z) \,\longrightarrow\, 0 \ .  
\end{align}
For $n \in \N$, $z \in \C^\times$, let 
$T_n(z)$ denote the class of $\F_{\zeta,\xi}(V_n(z))$ in 
the Grothendieck ring 
$\Gr\left( \Repfd(\algA_F) \right)$. 
By Theorems \ref{thm:UqCommGrothUnc} and \ref{thm:exrepcatfuncmon_uncpt1}, 
the elements $T_m(z)$ and $T_n(w)$ commute for each $m,n \in\N$ and $z,w \in \C^\times$. 
From 
\eqref{eq:unc-T-exseq-1} 
one obtains the relations 
\begin{align} \label{eq:T-Rel-1}
	T_1(z) T_n(q^{n+1}z) &= T_{n-1}(q^{n+2}z) 
		+ T_{n+1}(q^{n}z)~~, \quad z \in \C^\times \ . 
\end{align}
This reproduces \cite[Eqn.\,(4.13)]{BLZ99}. 
If the operators ${\cal T}_n(z) := \Op(\F_{\zeta,\xi}(V_n(z)))$ described in 
Section \ref{ssec:pert-def-and-cat} exist, then by the arguments in 
Section \ref{ssec:op+K0} they 
satisfy the same relations as the $T_n(z)$ do in the Grothendieck ring; 
in particular they mutually commute. These are the so-called T-operators.

\medskip

For $z \in \C^\times$, $m\in\Z$, and $q$ not a 
root of unity, denote by $Q_{+,m}(z)$ 
the representation of $\tilde U_\hbar(L\lsl_2)^+$ 
with basis $v_j$, $j \in -\N$, and action 
\begin{align}
	e_1^+.v_j = v_{j-1} \ , \qquad 
	e_0^+.v_j = z\frac{1-e^{2j \hbar}}{(e^\hbar - e^{-\hbar})^2} v_{j+1} \ , \qquad
	h_0.v_j = (2j+m) \cdot v_j \ .
\end{align}
These representations are pullbacks 
of $q$-oscillator representations, see Eqns.\ (3.8), (3.9), (D.6) of \cite{BLZ99}, \cite[Sect.\,2.3]{RW02}, 
and \cite[Sect.\,3.3.5]{Boos12}, and they were used in \cite{BLZ97a, BLZ99} to construct Q-operators. 
We have exact sequences 
\begin{align} 
	&0 \longrightarrow Q_{+,m-1}(zq^2) \longrightarrow Q_{+,m}(z) \ot V_1(z) 
	\longrightarrow Q_{+,m+1}(zq^{-2}) \longrightarrow 0 \ ,
	\nonumber \\[.5em]
&0 \longrightarrow Q_{+,m+1}(zq^{-2}) \longrightarrow V_1(z) \ot Q_{+,m}(z) 
	\longrightarrow Q_{+,m-1}(zq^{2}) \longrightarrow 0 \ . \label{eq:unc-Q-ex-seq} 
\end{align}
These may be obtained from \cite[Prop.\,2.2]{RW02} 
noting that $Q_{+,m}(z)$ is a subrepresentation of the ``$U_\hbar$ analogue''
of $M(z,q^{-m},\frac{-z}{(e^\hbar - e^{-\hbar})^2},0)$ 
as in \cite[Def.\,2.1]{RW02}. 
Furthermore, $Q_{+,m}(z) \ot V_1(w)$ and 
$V_1(w) \ot Q_{+,m}(z)$ are isomorphic for 
$z \neq w$ (this can be seen by specialising the universal R-matrix 
as in \cite[Sect.\,3.3]{RW02} or \cite[Sect.\,4.1]{Boos12}, or by direct calculation). 

Since $Q_{+,m}(z)$ is an infinite-dimensional representation, it does not give an element in
$\Gr\big( \Repfd(\algA_F) \big)$. There is no point in passing to the Grothendieck ring of all $\algA_F$-modules since that is trivial (Remark \ref{rem:unc-Groth-ring}).
But if the operator $\mathcal Q_{+,m}(z) := \Op(Q_{+,m}(z))$ 
can be defined, one expects from 
\eqref{eq:unc-Q-ex-seq}
and from the arguments in Section \ref{ssec:op+K0} that
\begin{align}
	\mathcal Q_{+,m}(z) \mathcal T_1(z) &= 
	\mathcal Q_{+,m-1}(zq^2) + \mathcal Q_{+,m+1}(zq^{-2}) 
	= \mathcal T_1(z) \mathcal Q_{+,m}(z) \ .
\end{align}
This reproduces the T-Q relation \cite[Eqn.\,(4.2)]{BLZ99}.\footnote{ The parameter $m$ does not appear in \cite[Eqn.\,(4.2)]{BLZ99} since there the Q-operators are defined in a way which is independent of the particular representation chosen.}

\begin{rem}\label{rem:T-vs-Q-uncomp}
We stress that the T-operators discussed above arise from representations of the {\em full} quantum group $\tilde U_\hbar(L\lsl_2)$ and therefore via Theorem \ref{thm:exrepcatfuncmon_uncpt1} provide objects in $\talg(F^+)\YD{\rho}\talg(F^-)$. In terms of perturbed defects as discussed in Section \ref{ssec:PertDefNonlocalConsCharges}, this means that -- assuming existence -- the operators ${\cal T}_n(z)$ will commute with the perturbed Hamiltonian. The Q-operators, on the other hand, are only representations of the Borel half $\tilde U_\hbar(L\lsl_2)^+$ and therefore do a priori {\em not} give non-local conserved charges of the perturbed theory. Indeed, it is not clear to us how to define Q-operators in the perturbed uncompactified free boson, i.e.\ in sin(h)-Gordon theory, in our formalism. (However, Q-operators are known in a lattice discretisation of sin(h)-Gordon theory, see \cite{NT09}.) 
\end{rem}

\section{Compactified free boson}
\label{sec:compboson}

The theory for a free boson with a circle as target space 
is more complicated than the 
uncompactified case since we need to introduce 
a nontrivial algebra $A$ and to describe the category $\CatD$ in terms of $A|A$-bimodules, see Section \ref{ssec:pert-def-and-cat}. 
It is possible to carry over results from the uncompactified to the compactified boson 
(see Proposition \ref{prop:fcCardyToAlg} below), 
but we would like to focus on a new feature: the appearance of the quantum group $U_q(L\lsl_2)$, rather than $\tilde U_\hbar(L\lsl_2)$ which we encountered in Section \ref{sec:uncompboson}.
The difference between $\tilde U_\hbar(L\lsl_2)$ and $U_q(L\lsl_2)$ is that the former is formulated via a generator $h$ where the latter involves $k^{\pm1}$. The two quantum groups are related by $q = e^{\hbar}$ and $k = e^{\hbar h}$. For $q$ a root of unity, $U_q(L\lsl_2)$ has so-called cyclic representations; these are absent for $\tilde U_\hbar(L\lsl_2)$, irrespective of the value of $\hbar$. The main results of this section are:
\begin{itemize}
\item Cyclic representations lead to non-local conserved charges which behave similarly to the Q-operators in the uncompactified boson. 
\item If $q$ is an even root of unity, T- and Q-operators do not always commute.
\end{itemize}

\subsection{Input data: algebra, bimodules and the bulk field $b$}\label{ssec:comp-input-data}

In Remark \ref{rem:top-def-in-RCFT} we stated that a rational CFT is described by a choice of a rational vertex operator algebra $\mathcal{V}$ and of a special symmetric Frobenius algebra $A$ in $\Cat = \mathrm{Rep}(\mathcal{V})$. The defect category in this case is $\CatD \cong A\text{-Mod$_\Cat$-}A$. The Heisenberg vertex operator algebra underlying the free boson is not rational, but we will assume that the categorical description of perturbed defects works analogously. 

\medskip

In Section \ref{ssec:unc_input-data} we introduced the topological Hopf algebra $\algA$, the simple $\algA$-modules $\simp{\alpha}$ and the canonical identifications $i_{\alpha,\beta} : \simp{\alpha} \ot \simp{\beta} \stackrel{\sim}{\to} \simp{\alpha+\beta}$. In view of Remark \ref{rem:unc-boson--cat-VOA} we set 
\begin{align}
  \Cat = \Repss(\algA) \ . 
\end{align}
We fix a nonzero complex number $r \in \C$ and define an algebra $A_r$ in $\Cat$ as follows. As an object in $\Cat$,
\begin{align} A_r := \bigoplus_{n \in \Z} \simp{nr} \ . \end{align}
The multiplication $\mu: A_r \ot A_r \to A_r$ and 
unit map $\eta: \one \to A_r$ are given by 
$\eta := \iota_0$ and $\mu := \sum_{m,n \in \Z} \iota_{m+n} \circ i_{mr,nr} \circ (\pi_m \ot \pi_n)$,
where $\pi_n: A_r \to \simp{nr}$ and $\iota_n: \simp{nr} \to A_r$ 
are the canonical projection and inclusion maps. As a difference to the rational CFT setting we point out that the algebra $A_r$ cannot be Frobenius since the underlying object is not self-dual. We define
\begin{align}
  \CatD = A_r\text{-Mod$_\Cat$-}A_r \ .
\end{align}
According to our above assumption on the similarity of the compactified free boson and rational CFT, the category $\CatD$ is interpreted as describing topological defects of the compactified free boson. (Of course, the algebraic considerations in this section are not affected by the validity of this interpretation.)
It is easy to describe $A_r|A_r$-bimodules explicitly. This is done in the following remark.

\begin{rem} \label{rem:Ar-bimods}
	Let $(W, \rho)$ be an object of $\Cat$ 
	with representation $\rho: \algA \to \End_\Vect(W)$ and let 
	$ B \in \End( W)$ be invertible and such that 
	$ B$ commutes with $\rho(\fxh)$. From this data we obtain an $A_r|A_r$-bimodule $X(W,B)$ as follows.
	The underlying object of $\Cat$ is
	\begin{align}
		X(W,B) = \textstyle \big(\bigoplus_{n \in \Z} W, \ 
			\bigoplus_{n \in \Z} (\gamma_{nr} \, \id_{W} + \rho)\big) 
	\end{align} 
	of $\Cat$, where $\gamma_{nr}$ is the algebra homomorphism $\algA \to \C$ sending $\fxh$ to $nr \in \C$. 
	Let $s: \bigoplus_{n \in \Z} W \to \bigoplus_{n \in \Z} W$ 
	be the shift $s\left( \oplus_{n \in \Z} w_n \right) := \oplus_{n \in \Z} w_{n-1}$. 
	The left and right action of $A_r$ on $X(W,B)$ are
	\begin{align}
m_l(1_{mr} \ot \id_{W_\algA}) := s^m \quad , \qquad
m_r(\id_{W_\algA} \ot 1_{mr}) := \big( \oplus_{n \in \Z} B^m \big) \circ s^m \ .
\label{eq:Ar-action-X(b,x)}
\end{align}
	It is not hard to see that any $A_r|A_r$-bimodule is isomorphic to one of these. 
	The simple bimodules are of the form $X(\C_\beta,\xi\cdot\id_{\C_\beta})$ 
	for  $\beta \in \C$ and $\xi \in \C^\times$ 
	and we write $X(\beta,\xi) := X(\C_\beta,\xi\cdot\id_{\C_\beta})$, for short. 
	We have
\begin{align} \label{eq:XX-isos}
X(\beta,\xi) \,\cong\, X(\beta',\xi')
\quad \Leftrightarrow \quad \beta - \beta' \in r\Z ~~\text{and}~~ \xi = \xi' \ .
\end{align}
	Furthermore
	\begin{align}\label{eq:X-bimod-tensor}
	X(\beta,\xi) \otA X(\beta',\xi') \cong X(\beta+\beta', \xi\xi') \ .
	\end{align}
    From this we can conclude that the bimodules $X(\beta,\xi)$ are group-like and their isomorphism classes are parametrised by the multiplicative group $\C^\times \times \C^\times$, in agreement with \cite[Sect.\,4.1]{Fuchs07} (there, an additional automorphism is allowed, resulting in an extra $\Z_2$).
Recall the functor $\alpha : \DZ(\Cat) \to \DZ(A\BMod[\Cat] A)$ from Proposition \ref{prop:alphaind} and the resulting two functors $\alpha^\pm : \Cat \to \DZ(A\BMod[\Cat] A)$ given in Remark \ref{rem:alpha-frs-relation}. We have
\begin{align}\label{eq:alpha+-for-Ar}
  U\alpha^\pm(\C_\beta) \cong X(\beta, e^{\pm \kappa r\beta}) \ ,
\end{align}
where $U$ is the forgetful functor $\DZ(\Cat)\to\Cat$.
\end{rem}

With the detailed understanding of simple $A_r|A_r$-bimodules we can now verify that the prescription \eqref{eq:Hom_VV-iso-Hom_AA}, originally formulated for rational CFTs, correctly recovers the state space of a compactified free boson:

\begin{rem} \label{eq:comp-bulkspace-check}
The space of bulk states $\mathcal{F}$ is a representation of two copies of the Heisenberg vertex operator algebra. To compute the multiplicity $M_{p,q}$ of the highest weight representation $H_p \otimes_{\C} H_{q}$ (cf.\ Remark \ref{rem:unc-boson--cat-VOA}) in $\mathcal{F}$, the prescription \eqref{eq:Hom_VV-iso-Hom_AA} instructs us to evaluate $M_{p,q} = \dim\Hom_{A_r|A_r}(\alpha^+(\C_p) \ot_A \alpha^-(\C_q), A_r)$. 
By \eqref{eq:X-bimod-tensor} and \eqref{eq:alpha+-for-Ar}, we have 
$\alpha^+(\C_p) \ot_A \alpha^-(\C_q) \cong X(p+q, e^{\kappa r(p-q)})$ 
and from this we read off 
\begin{align} \label{eq:Mpq-free-boson}
  M_{p,q} = \begin{cases} 1 & \text{if } p+q \in r\Z \text{ and } p-q \in 
  \frac{2\pi i}{\kappa r}\Z \\
		0 & \text{else}. \end{cases} 
 \end{align}
According to Remark \ref{rem:unc-boson--cat-VOA}, we should choose the constant $\kappa$ defining the braiding as $\kappa = i \pi$. We obtain the conditions $p+q \in r\Z$ and $p-q \in \tfrac{2}{r} \Z$, which -- for real $r$ and in appropriate conventions -- describe the standard charge lattice of a free boson compactified on a circle of radius $r$.
\end{rem}

Having described the space of bulk states $\mathcal{F}$, we now fix the representation of the perturbing field $F$ and the bulk field morphism $b$. We choose $F$ to be the same as in the uncompactified case in the sense that
\begin{align}
  F := F^+ \oplus F^- = \alpha(F_\mathrm{unc}) 
  \qquad , ~~ \text{where}~~ F^\pm = \alpha^\pm(F_\omega)  \ .
  \label{eq:comp-free-boson-F}
\end{align}
Here, $F_\omega = \C_\omega \oplus \C_{-\omega}$ as in Section \ref{ssec:unc_input-data} and $F_\mathrm{unc}$ is the choice for $F$ in the uncompactified boson, cf.\ \eqref{eq:uncomp-free-boson-F}. 
To specify the bulk field morphism $b : F^+ \otA F^- \to A_r$, first note that by \eqref{eq:X-bimod-tensor} and \eqref{eq:alpha+-for-Ar} we have $F^+ \otA F^- \cong 
  X(-2\omega,0) \oplus X(0,e^{-2\kappa r \omega}) \oplus X(0,e^{2\kappa r \omega}) \oplus X(2\omega,0)$. 
We would like to restrict our attention to $b$'s that are non-vanishing only on pairs of ``opposite charge'', that is, we want $b$ to factor through $X(0,e^{\pm 2\kappa r \omega})$. 
For there to be a nonzero 
bimodule map  $X(0,e^{\pm2\kappa r \omega}) \to A_r$ at all, by \eqref{eq:XX-isos} we must have
\begin{align}\label{eq:omega-via-t}
   \omega = \tfrac{\pi i}{\kappa r} \cdot t 
   \qquad , ~~ t \in \Z \ ,
\end{align}
and we will demand $t \neq 0$ to have a nontrivial perturbation. For $\omega$ of the form \eqref{eq:omega-via-t}, the objects $A_r$ and $F_\omega$ are {\em mutually transparent} in $\Cat$, i.e.\ they satisfy
\begin{align}
  c_{F_\omega,A_r} = c_{A_r,F_\omega}^{-1} \ .
\end{align}
Because of this, $U\alpha^+(F_\omega) = U\alpha^-(F_\omega)$ as $A_r|A_r$-bimodules in $\Cat$. After this preparation, we can finally specify our choice for the bulk field morphism $b$:
\begin{align}
  b_{\zeta,\xi} := 
  \Big[~ &
  U\alpha^+(F_\omega) \ot_{A_r} U\alpha^-(F_\omega)
  =
  U\alpha^+(F_\omega) \ot_{A_r} U\alpha^+(F_\omega)
\nonumber \\
  & \xrightarrow{~\sim~}
  U\alpha^+(F_\omega \ot F_\omega)
  \xrightarrow{U\alpha^+((b_\mathrm{unc})_{\zeta,\xi})}
  U\alpha^+(\one)
  =
  A_r ~\Big] \ ,
  \label{eq:com-b_ze-xi_def}
\end{align}
where $(b_\mathrm{unc})_{\zeta,\xi}$ is the choice \eqref{eq:uncom-b_ze-xi_def} of $b$ in the uncompactified case. Below, we will skip the initial isomorphism and just write $b_{\zeta,\xi} = U\alpha^+((b_\mathrm{unc})_{\zeta,\xi})$.

\begin{rem}
The parameter $t$ has a straightforward interpretation in the sin(h)-Gordon model discussed in Remark \ref{eq:sin(h)-Gorden-uncomp}. The field $\varphi^\text{full}(x)$ is now periodic with period $2 \pi r$ (cf.\ Remark \ref{eq:comp-bulkspace-check}). The perturbing term ${:}\cos(\omega \varphi^\text{full}(x)){:}$ must respect this period, and this requires $\omega \in \frac{1}{r} \Z$, in agreement with \eqref{eq:omega-via-t} for $\kappa=i\pi$. 
\end{rem}

Since we have chosen $F$ and $b_{\zeta,\xi}$ for the compactified boson in the same way as for the uncompactified boson, it is possible to use the functor $\alpha$ to transport results from $\Cat_{F_{\mathrm{unc}}}$ to $\CatD_F$. This is made precise by the following proposition whose proof can be found in Appendix \ref{apx:fcCardyToAlg}. We will not elaborate further on this and instead move on to prepare the ground for new features not present in the uncompactified boson. 

\begin{prop} \label{prop:fcCardyToAlg}
Let $\Cat$ be an abelian braided monoidal category with 
right-exact tensor product, let $A$ be an algebra in $\Cat$ and write $\CatD = A\BMod[\Cat] A$. 
Fix objects $F^+_\Cat, F^-_\Cat \in \Cat$ such that $F^-_\Cat$ and $A$ are mutually transparent.
\begin{enumerate}
\item
Set $F_{\DZ\Cat} := \iota^+(F^+_\Cat) \oplus \iota^-(F^-_\Cat)$ and 
 $F := \alpha^+(F^+_\Cat) \oplus \alpha^-(F^-_\Cat)$. 
The functor $\Cat_{F_{\DZ\Cat}} \rightarrow \CatD_F$ given by
 \begin{gather} \label{eq:C-to-AAbim}
	(X,m) \mapsto (U\aind^+(X), U\aind^+(m)) \quad , \qquad 
	f \mapsto U\aind^+(f) 
 \end{gather}
 is monoidal. 
\item
 If $b_\Cat: F^+_\Cat \ot F^-_\Cat \to \one$ is a morphism in $\Cat$, 
 then the functor \eqref{eq:C-to-AAbim} maps objects 
 satisfying the commutation condition 
 with $b_\Cat$ to objects 
 satisfying the commutation condition with 
 $U\alpha^+(b_\Cat): U\alpha^+(F^+_\Cat) \ot_A U\alpha^-(F^-_\Cat) \to A$. 
\item
 If the functor $A \ot (-): \Cat \to \Cat$ 
 is exact and $F^\pm_{\Cat} \ot (-): \Cat \to \Cat$ 
 are right-exact, then the functor \eqref{eq:C-to-AAbim} is exact. 
\end{enumerate}
\end{prop}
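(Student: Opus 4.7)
The plan is to verify each item in turn by unpacking the functor $(X,m) \mapsto (U\aind^+(X), U\aind^+(m))$, leveraging that $\aind$ is a braided monoidal functor (Proposition \ref{prop:alphaind}) and that $U$ is a (strict) monoidal functor from the monoidal centre.

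First I would check well-definedness on objects: given $(X,m)$ in $\Cat_{F_{\DZ\Cat}}$, the map $m$ is a morphism $(F^+_\Cat \oplus F^-_\Cat) \ot X \to X$ in $\Cat$. Applying the monoidal functor $U\aind^+$ yields a map with source $U\aind^+(F^+_\Cat \oplus F^-_\Cat) \ot_A U\aind^+(X)$. The crucial identification is $U\aind^+(F^-_\Cat) \cong U\aind^-(F^-_\Cat)$ in $\CatD$: from the formula \eqref{eq:alpha-def}, the two bimodule structures differ only in the right action, whose definitions involve $c_{F^-_\Cat,A}$ and $c_{A,F^-_\Cat}^{-1}$ respectively, and these coincide precisely because $F^-_\Cat$ and $A$ are mutually transparent. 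After this identification, $U\aind^+(F^+_\Cat \oplus F^-_\Cat) \cong F$, so the image is indeed an object of $\CatD_F$.

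For part (1), monoidality reduces to checking that the tensor product $T(m,n)$ of \eqref{eq:Tpmorph} is preserved. The first summand of $T$ is purely multiplicative and is manifestly compatible with $U\aind^+$ being monoidal. The second summand involves the half-braiding $\varphi_{F_{\DZ\Cat},X}$ in $\DZ(\Cat)$, which $\aind$ carries to $\varphi_{F,U\aind^+(X)}$ in $\DZ(\CatD)$ since $\aind$ is a functor of monoidal centres; after applying $U$ this matches the second summand of $T$ in $\CatD_F$. The structure isomorphisms of the monoidal functor are supplied by the coherences of $U\aind^+$. Part (2) is then immediate: apply $U\aind^+$ term by term to the commutation condition \eqref{eq:compatible}; the half-braidings of $F^\pm_\Cat$ transport to those of $F^\pm$, the restrictions $m_{F^\pm}$ transport to the restrictions of $U\aind^+(m)$, and $b_\Cat$ transports to $U\aind^+(b_\Cat)$ by construction, so both sides are transported faithfully.

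For part (3) I would apply Lemma \ref{lem:exact_iff} in both source and target to reduce exactness in $\Cat_{F_{\DZ\Cat}}$ and $\CatD_F$ to exactness of the underlying sequences in $\Cat$ and $\CatD$. The hypothesis that $F^\pm_\Cat \ot -$ is right-exact gives the source condition directly, and combined with exactness of $A \ot -$ one verifies that $F \ot_A -$ is right-exact on $\CatD$. It then suffices to show that $U\aind^+: \Cat \to \CatD$ is exact; since $U\aind^+(X)$ has underlying object $A \ot X$ in $\Cat$ and the forgetful functor $\CatD \to \Cat$ preserves and reflects kernels and cokernels (bimodule structures are inherited canonically), this reduces to exactness of $A \ot -: \Cat \to \Cat$, which is a hypothesis. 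The main obstacle throughout is not conceptual but purely bookkeeping: keeping careful track of half-braidings, bimodule structures, and which category a given exactness statement is being made in.
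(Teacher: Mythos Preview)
Your proposal is correct and follows essentially the same route as the paper's proof: well-definedness via the mutual transparency identification $U\aind^+(F^-_\Cat) \cong U\aind^-(F^-_\Cat)$, monoidality by transporting $T(m,n)$ through the braided monoidal functor $\aind$ (Proposition \ref{prop:alphaind}), Part 2 by applying $U\aind^+$ termwise to \eqref{eq:compatible}, and Part 3 by reducing exactness in $\Cat_{F_{\DZ\Cat}}$ and $\CatD_F$ to the underlying categories via Lemma \ref{lem:exact_iff} together with the fact that the forgetful functor $\CatD \to \Cat$ reflects exactness. One small point worth making explicit in your write-up: when you transport the half-braiding on the $F^-$ summand, the mutual transparency hypothesis is needed a second time (not only for well-definedness), since you are applying $U\aind^+$ throughout but must land on the half-braiding of $F^- = \aind^-(F^-_\Cat)$; the paper flags this explicitly in its step (*).
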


\subsection{A subcategory of the category for perturbed defects}
\label{ssec:comp-bosonisation}

In the following we will restrict our attention to a full subcategory of $\CatD = A_r\BMod[\Cat]A_r$ which is semisimple and consists of direct sums of simple objects isomorphic to $X(\beta,1)$ for some $\beta \in \C$. We will write
\begin{align}
  \CatD_\mathrm{ss}^1 := \big\langle X(\beta,1) \big\rangle_{\beta \in \C} ~\subset \CatD \ .
\end{align}
The reason for this is two-fold: firstly, this subcategory is sufficient to describe the $Q$-like operators arising from cyclic representations of $U_q(L\lsl_2)$, and secondly, the associativity isomorphisms can be trivialised on $\CatD_\mathrm{ss}^1$ but not on $\CatD$ (or even on the full semisimple subcategory generated by all $X(\beta,\xi)$). The last point is explained in detail in Appendix \ref{apx:fcAB}. In order for $\alpha^\pm(\C_\omega) \cong X(\omega, e^{\pm\kappa\omega r}) = X(\omega, e^{\pm \pi i t})$ 
to be contained in $\CatD_\mathrm{ss}^1$ we will require that
\begin{align}
  t \in 2\Z \ .
  \label{eq:com-fb-t-even}
\end{align}

Continuing along the same lines as in Section \ref{sec:uncompboson}, we will now express $\CatD_\mathrm{ss}^1$ as the (semisimple part of the) representation category of an appropriate bialgebra $\algB$ over $\C$; eventually this will play the role 
of the Cartan-like subalgebra of $U_q(L\lsl_2)$.
The bialgebra $\algB$ is generated as a unital algebra by elements $k,k^{-1}$ with relations
\begin{align}
 kk^{-1} = 1 = k^{-1}k \ .
\end{align}
It is equipped with the coproduct, counit and antipode determined by 
\begin{gather}
 \Delta(k) = k \ot k \ , \qquad \Delta(k^{-1}) = k^{-1} \ot k^{-1} \ , \nonumber\\
 \varepsilon(k) = 1 \ , \qquad S(k) = k^{-1} \ .  
\end{gather}
Denote by $\Repss(\algB)$ the category of semisimple $\algB$-modules, that is, modules on which $k$ acts diagonalisably. We will define a functor
\begin{align}
  \fcAB = \Big[ \, \Repss(\algB) \xrightarrow{~\fcAB^0~} \Cat \xrightarrow{~\iota^0~} 
  \DZ(\Cat) \xrightarrow{~\alpha~} \DZ(\CatD)  \xrightarrow{~U~} \CatD \,\Big] \ ,
  \label{eq:RepB-2-AmodA-def}
\end{align}
where the constituents $\fcAB^0$ and $\iota^0$ will be described momentarily, and we will show that $\fcAB$ is a monoidal equivalence onto the subcategory $\CatD_\mathrm{ss}^1$.

Let $\mathrm{mod}_r: \C \to \C$ (``remainder mod $r$'') be the map 
which sends numbers in the class $z + r\Z$ 
to the unique representative $z' \in z + r\Z$ 
for which the real part of $\frac {z'}r$ 
is contained in the interval $[0,1)$, i.e.\ $z'$ is 
in the strip $r\cdot ([0,1) \times i\R)$. 
For some choice of logarithm $\ell(z)$ of $z \in \C^\times$ 
define $\mathcal{L} : \C^\times \to \C$ by 
\begin{align} \label{eq:comp_Psi}
	\mathcal{L}(z) := \mathrm{mod}_r\left( r \cdot \frac{\ell(w)}{2\pi i} \right) \ . 
\end{align}
For another choice of logarithm of $z$, $\widetilde\ell(z) = \ell(z) + 2\pi in$ for 
some $n \in \Z$, we obtain 
\begin{align} 
	\tilde{\mathcal{L}}(z) &= \mathrm{mod}_r\left( r \cdot \frac{\widetilde\ell(z)}{2\pi i} \right) 
	= \mathrm{mod}_r\left( r\cdot\frac{\ell(z) + 2\pi i n}{2\pi i} \right) 
	= \mathcal{L}(z) \ ,
\end{align}
so that $\mathcal{L}$ is independent of that choice. 

The (non-monoidal) functor $\fcAB^0 : \Repss(\algB) \to \Cat$ is defined as follows.
A representation $(V,\rho_\algBim)$ in $\Repss(\algB)$ is a direct sum of  
eigenspaces $V_z = \ker(z\cdot\id - \rho_\algB(k))$ for eigenvalues $z \in \C^\times$ of $\rho_\algB(k)$. Define an $\algA$-action $\rho_\algA$ on $V$ by 
\begin{align} \label{eq:def-rho-A}
	\rho_\algA(\fxh).v = \mathcal{L}(z) \cdot v 
	\qquad \text{for} \qquad v \in V_z \ .
\end{align} 
Then $\fcAB^0$ sends $(V,\rho_\algB)$ to $(V,\rho_\algA)$ and acts
as the identity on hom-sets. 
Next, let $\iota^0 : \Cat \to \DZ(\Cat)$ be the functor 
\begin{align}
	V \mapsto (V,\tau_{V,-}) \ , \qquad f \mapsto f \ ,
\end{align}
where $\tau_{V,W}: V \ot W \to W \ot V$ is the tensor flip, 
$\tau_{V,W} (v \ot w) = w \ot v$. This completes the definition of the functor $\fcAB$ in \eqref{eq:RepB-2-AmodA-def}

\begin{rem} \label{rem:image-of-bimod-functor}
The functor $U\circ\alpha\circ\iota^0$ maps a simple object 
$\C_\beta \in \Cat$ to $X(\beta,1) \in A_r\BMod[\Cat]A_r$. 
Let $\C_z$ be the simple object in $\Repss(\algB)$ with $k$ acting 
on $\C$ by $k.1 = z$, and let $\beta$ be in the image of $\mathrm{mod}_r$ and satisfy $z = \exp(\frac{2\pi i}r \beta)$. Then $\fcAB(\C_z) = X(\beta,1)$.
\end{rem}

It turns out that the composition $\mathcal{I}$ in \eqref{eq:RepB-2-AmodA-def} is monoidal, even though $\fcAB^0$ 
is not: 

\begin{prop} \label{prop:fcAB}
	The functor $\fcAB : \Repss(\algB) \to \CatD_\mathrm{ss}^1$ is a monoidal equivalence.
\end{prop}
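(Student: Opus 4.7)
The plan is to check essential surjectivity and full faithfulness at the level of simple objects and then build the monoidal coherence data from the canonical isomorphisms \eqref{eq:XX-isos} and \eqref{eq:X-bimod-tensor}. Since both $\Repss(\algB)$ and $\CatD_\mathrm{ss}^1$ are $\C$-linear and semisimple, and since $\fcAB$ is a composition of additive functors, it suffices to work with simples.

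First I would trace through $\fcAB$ on a simple $\C_z \in \Repss(\algB)$: by Remark \ref{rem:image-of-bimod-functor}, $\fcAB(\C_z) \cong X(\mathcal{L}(z),1)$, which indeed lies in $\CatD_\mathrm{ss}^1$. Conversely, every simple object of $\CatD_\mathrm{ss}^1$ is, up to isomorphism, of the form $X(\beta,1)$ with $\beta$ in the image of $\mathrm{mod}_r$, and setting $z := \exp(\tfrac{2\pi i}{r}\beta)$ gives $\mathcal{L}(z)=\beta$, so $X(\beta,1) \cong \fcAB(\C_z)$. Combined with the fact that on both sides simple objects are absolutely simple (endomorphism algebra $\C$) and isomorphisms between non-isomorphic simples vanish, this makes $\fcAB$ essentially surjective and fully faithful on simples, hence an equivalence of $\C$-linear categories.

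Next I would construct the monoidal structure. On simples, tensoring is easy: $\C_z \ot \C_w = \C_{zw}$ in $\Repss(\algB)$ because $k$ is group-like, while in $\CatD_\mathrm{ss}^1$, equation \eqref{eq:X-bimod-tensor} together with trivial stabilisers gives $X(\mathcal{L}(z),1) \otA X(\mathcal{L}(w),1) = X(\mathcal{L}(z)+\mathcal{L}(w),1)$. The two descriptions differ only in the representative of the $r\Z$-coset, since $\mathcal{L}(z)+\mathcal{L}(w)-\mathcal{L}(zw) \in r\Z$. By \eqref{eq:XX-isos} there is a canonical isomorphism $X(\mathcal{L}(z)+\mathcal{L}(w),1) \xrightarrow{\sim} X(\mathcal{L}(zw),1)$, and I would take this as the structure map $\fcAB_2: \fcAB(\C_z) \otA \fcAB(\C_w) \to \fcAB(\C_z \ot \C_w)$, extending additively to all of $\Repss(\algB)$. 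The unitor $\fcAB_0: \one_{\CatD_\mathrm{ss}^1} = A_r = X(0,1) \to \fcAB(\C_1) = X(\mathcal{L}(1),1)$ is defined similarly (note $\mathcal{L}(1)=0$).

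The main obstacle I anticipate is the verification of the pentagon and triangle axioms, which is where the choice of the subcategory $\CatD_\mathrm{ss}^1$ rather than the full semisimple subcategory becomes essential: only there (as promised by Appendix \ref{apx:fcAB}) can the associator be trivialised so that coherence reduces to associativity of addition in $\C$ modulo $r\Z$, and similarly the unit axiom reduces to $0+\beta=\beta=\beta+0$. Once the associator of $\CatD_\mathrm{ss}^1$ is identified with the identity on $X(\beta_1+\beta_2+\beta_3,1)$ and the associator of $\Repss(\algB)$ is the trivial one inherited from $\Vect$, the pentagon for $\fcAB_2$ becomes the commutativity of three copies of the canonical isomorphism $X(\beta,1) \cong X(\beta + nr,1)$, which holds by naturality. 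This completes the proof that $\fcAB$ is a monoidal equivalence.
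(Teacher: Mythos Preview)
Your approach is the same as the paper's: reduce to simples, check essential surjectivity and full faithfulness there, and then argue that the monoidal coherence follows once the associator on $\CatD_\mathrm{ss}^1$ is known to be trivial. The equivalence part is fine.

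The gap is in the monoidal part. You correctly identify that the pentagon is the crux and that it hinges on the associator of $\CatD_\mathrm{ss}^1$ being trivialisable, but you then justify this by citing Appendix~\ref{apx:fcAB}. That appendix \emph{is} the proof of Proposition~\ref{prop:fcAB}; the claim in the main text that ``the associativity isomorphisms can be trivialised on $\CatD_\mathrm{ss}^1$'' is a forward reference to the very computation you are supposed to supply. So your argument is circular at the one step that matters. The paper earns this step by explicitly choosing cokernels $\lambda_{g,g'}: X_g \ot X_{g'} \to X_{gg'}$ on the whole pointed category $(A_r\BMod[\Cat]A_r)_\mathrm{ss}$ with group $G = \C^\times \times \C^\times$, and computing the resulting associator $3$-cocycle to be $\psi(g_1,g_2,g_3) = \xi_1^{\sigma(\eta_2,\eta_3)}$, where $\sigma$ is a $\Z$-valued $2$-cocycle coming from the failure of $\mathcal{L}$ to be additive. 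On the subgroup $H = \{(\eta,1)\}$ corresponding to $\CatD_\mathrm{ss}^1$ one has $\xi_1 = 1$, so $\psi|_{H^{\times 3}} = 1$ and the associator is trivial there. Your ``holds by naturality'' is not a substitute for this: the shift isomorphisms $X(\beta,1) \cong X(\beta+nr,1)$ do compose additively, but the pentagon also involves the associator of $\otA$, and it is exactly the vanishing of $\psi$ on $H$ that makes the two sides agree.
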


For the proof of this proposition we refer to Appendix \ref{apx:fcAB}. 
It is easy to see that $\fcAB$ is an equivalence of categories and that
$\fcAB(X \ot Y) \cong \fcAB(X) \otA \fcAB(Y)$; the slightly lengthy part of the proof is to show that the associator can be trivialised on $\CatD_\mathrm{ss}^1$.

\medskip

Our next aim is to find an object $\ptfE \in \DZ(\Repss(\algB))$ corresponding to $F \in \DZ(\CatD)$, together with a full monoidal embedding $\Repss(\algB)_\ptfE \to \CatD_F$. For the remainder of this section we fix 
\begin{align}\label{eq:def-of-q}
q := \exp\!\big(\tfrac{\pi i}r \omega\big) 
	\stackrel{\eqref{eq:omega-via-t}}= \exp\!\big(\tfrac{\kappa\omega^2}t\big) \ . 
\end{align}
The object $\ptfE := (\tilde \ptfE, \varphi_{\ptfE,-})$  
in $\DZ(\Repss(\algB))$ consists of the $\algB$-module 
\begin{align}
 \tilde \ptfE := \C_{q^{2}} \oplus \C_{q^{-2}} \oplus \C_{q^{2}} \oplus \C_{q^{-2}} \ , 
\end{align}
	where $\C_{q^{\pm 2}}$ is as in Remark \ref{rem:image-of-bimod-functor}, together with
the half-braiding 
\begin{align}
	\varphi_{\tilde \ptfE,(V,\rho)} := \tau_{\tilde E,V} \circ \big( & \phantom{\oplus}
		(\id \ot \rho(k^{t/2})) \circ (p_1 \ot \id_V) \oplus 
		(\id \ot \rho(k^{-t/2})) \circ (p_2 \ot \id_V) \nonumber\\
	&\oplus (\id \ot \rho(k^{-t/2})) \circ (p_3 \ot \id_V) \oplus 
		(\id \ot \rho(k^{t/2})) \circ (p_4 \ot \id_V) \big) \ ,
 \label{eq:halfbraiding_F0}
 \end{align}
	where $p_1,\ldots,p_4$ are the canonical projections 
	$\tilde E \to \C_{q^{\pm 2}}$. The powers of $\pm t/2$ make sense because in \eqref{eq:com-fb-t-even} we assumed $t$ to be even. Now note that since $\mathcal{L}(q^{2}) = \mathrm{mod}_r(\omega)$, by \eqref{eq:XX-isos} there is an isomorphism
\begin{align}
  g: \fcAB(\tilde E) \xrightarrow{~\sim~} X(\omega,1) \oplus X(-\omega,1) \oplus X(\omega,1) \oplus X(-\omega,1) = UF 
  \label{eq:iso-g-E-UF}
\end{align}
of $A_r|A_r$-bimodules.
Let us fix a choice for the isomorphism $g$ for the rest of this section.

\begin{lem} \label{lem:FFnot}
The diagram 
	\begin{align} \label{dg:lem-FFnot}\xymatrixcolsep{4pc} 
		\xymatrix{ UF \ot_{A_r} \fcAB(X) \ar[r]^-{g^{-1} \ot_{A_r} \id} \ar[d]^{\varphi_{F,\fcAB(X)}} 
			&\fcAB(U\ptfE) \ot_{A_r} \fcAB(X) = \fcAB(U\ptfE \ot X) \ar[d]^{\fcAB(\varphi_{\ptfE,X})} \\
		\fcAB(X) \ot_{A_r} UF \ar[r]^-{\id \ot_{A_r} g^{-1}} 
			&\fcAB(X) \ot_{A_r} \fcAB(U\ptfE) = \fcAB(X \ot U\ptfE) } 
	\end{align} 
	commutes for every  $X\in\Repss(\algB)$.
\end{lem}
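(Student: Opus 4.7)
The plan is to verify diagram \eqref{dg:lem-FFnot} summand by summand on simple $\algB$-modules. Both vertical arrows are natural in $X$ (the left one by naturality of $\varphi_{F,-}$, the right one by naturality of $\varphi_{\ptfE,-}$ together with functoriality of $\fcAB$), while the horizontal arrows are natural by construction; moreover $\ot_{A_r}$ and $\fcAB$ preserve direct sums. Since $\Repss(\algB)$ is semisimple it therefore suffices to check the diagram for $X = \C_z$ a simple $\algB$-module. For such an $X$ put $\beta := \mathcal{L}(z)$, so that $z = \exp(\tfrac{2\pi i}{r}\beta)$ by the discussion after \eqref{eq:comp_Psi} and $\fcAB(\C_z) = X(\beta,1)$ by Remark \ref{rem:image-of-bimod-functor}.

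Under the isomorphism $g$ from \eqref{eq:iso-g-E-UF}, each of the four summands of $\fcAB(U\ptfE)$ is matched with one summand of $UF$, and both half-braidings respect this decomposition. It thus suffices to compare, for each of the four matched summands, the scalars by which the two half-braidings act on $X(\beta,1)$. In both cases the underlying set-theoretic map is the tensor flip: for $\ptfE$ this is built into \eqref{eq:halfbraiding_F0} via $\tau$; for $F = \alpha^+(F_\omega) \oplus \alpha^-(F_\omega)$ it follows from the formula for the half-braiding of $\alpha(-)$ stated before Proposition \ref{prop:alphaind}, combined with the fact that the braiding $c$ of $\Cat$ acts on simples as the tensor flip up to a scalar.

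The remaining scalars are computed as follows. On the $\iota^+(\C_{\pm\omega})$-summands the half-braiding $\varphi_{F,-}$ applied to $X(\beta,1) = \bigoplus_n \C_{\beta+nr}$ is induced by $c_{\C_{\pm\omega},-}$, and on the $\iota^-(\C_{\pm\omega})$-summands by $c_{-,\C_{\pm\omega}}^{-1}$. By \eqref{eq:braiding-on-simples} these act by $e^{\pm\kappa\omega(\beta+nr)}$ and $e^{\mp\kappa\omega(\beta+nr)}$ respectively; the mutual transparency of $F_\omega$ and $A_r$, i.e.\ $e^{\kappa\omega r} = e^{\pi i t} = 1$ since $t \in 2\Z$, collapses these to $e^{\pm\kappa\omega\beta}$ and $e^{\mp\kappa\omega\beta}$, independent of $n$. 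On the other side, $\varphi_{\ptfE,\C_z}$ acts by $z^{\pm t/2}$ on the four summands, and the identity $z^{t/2} = \exp(\tfrac{\pi i t\beta}{r}) = e^{\kappa\omega\beta}$, using $\omega = \tfrac{\pi i t}{\kappa r}$ from \eqref{eq:omega-via-t}, matches the two lists of scalars summand for summand.

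I expect the main technical point to be verifying that the scalar $e^{\kappa\omega rn}$ contributed by the $n$-th graded component of $X(\beta,1)$ collapses to $1$; this is precisely where the restriction $t\in 2\Z$ of \eqref{eq:com-fb-t-even} enters, and it is exactly what is needed for the isomorphism $g$ to exist at all. Once this is in hand, the diagram commutes by the elementary exponent arithmetic just described.
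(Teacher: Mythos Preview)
Your proof is correct and follows essentially the same outline as the paper's: reduce to simple $X=\C_z$, compare the two half-braidings summand by summand, and match scalars via $z^{t/2}=e^{\kappa\omega\beta}$.

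The one methodological difference is in how the left-hand half-braiding $\varphi_{F,\fcAB(X)}$ is computed. You work directly with the formula before Proposition~\ref{prop:alphaind}, evaluate it on the graded pieces $\C_{\beta+nr}$ of $X(\beta,1)$, obtain $e^{\pm\kappa\omega(\beta+nr)}$, and then invoke mutual transparency ($e^{\kappa\omega r}=1$ since $t\in2\Z$) to see the answer is independent of $n$. The paper instead observes that $\fcAB(X)=U\alpha(\C_\beta,\tau)$ and uses that $\alpha:\DZ(\Cat)\to\DZ(\CatD)$ is braided (Proposition~\ref{prop:alphaind}); this immediately identifies $\varphi_{\alpha^+(\C_\omega),\fcAB(X)}$ with $\alpha(c_{\C_\omega,\C_\beta})$, yielding the scalar $e^{\kappa\omega\beta}$ without ever decomposing $X(\beta,1)$ into graded components. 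Your approach is more explicit and perhaps more self-contained; the paper's is slicker but leans on the braided-functor statement. Both are valid. (A minor terminological slip: the summands of $F$ are $\alpha^\pm(\C_{\pm\omega})$, not $\iota^\pm(\C_{\pm\omega})$; the latter live in $\DZ(\Cat)$ before applying $\alpha$.)
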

\begin{proof}
 It suffices to show commutativity of \eqref{dg:lem-FFnot} in the 
 case of $X$ being a simple $\algB$-module. 
 So let $X = \C_z$ for $z = \exp(\frac{2\pi i}r \beta)$
 as in Remark \ref{rem:image-of-bimod-functor}. 
 Then $\varphi_{\ptfE,X}$ is -- up to a tensor flip -- 
 multiplication with the matrix 
 \begin{align}
	\left(\begin{smallmatrix} z^{\wzconst/2} & & & \\ & z^{-\wzconst/2} & & \\
			& & z^{-\wzconst/2} & \\ & & & z^{\wzconst/2} \end{smallmatrix}\right)
\label{eq:I-monoidal-aux}
 \end{align}
 on $U\ptfE \ot X \cong \C^4 \ot \C \cong \C^4$ (in $\Vect$). 
 On the other hand, to describe $\varphi_{F,\fcAB(X)}$, 
 let us restrict to the $U\aind^+(\C_\omega)$-summand of $UF$; the others 
 are treated similarly. Recall from Proposition \ref{prop:alphaind} that $\aind: \DZ(\Cat) \to \DZ(A_r\BMod[\Cat]A_r)$ is braided monoidal. Now $\aind^+(\C_\omega) = \aind(\C_{\omega},c)$ and $\fcAB(X) = U \aind(\C_{\beta},\tau)$, so that by definition of the braiding on $\DZ(\Cat)$, the following diagram commutes:
\begin{align} 
  \xymatrix{
 \aind^+(\C_\omega) \ot_{A_r} \fcAB(X) \ar[rr]^\sim \ar[d]^{\varphi_{\alpha^+(\C_\omega),\fcAB(X)}} &&	
   \aind\big((\C_{\omega},c) \ot (\C_{\beta},\tau)\big) \ar[d]^{c_{\C_{\omega},\C_{\beta}}}
\\
   \fcAB(X) \ot_{A_r} \aind^+(\C_\omega)  \ar[rr]^\sim  &&	
   \aind\big((\C_{\beta},\tau) \ot (\C_{\omega},c)\big) 
  }
\end{align} 
But $c_{\C_{\omega},\C_{\beta}} = e^{\kappa \omega \beta} \cdot \tau_{\C_{\omega},\C_{\beta}}$ and $e^{\kappa \omega \beta} \overset{\eqref{eq:omega-via-t}}=  \exp(\frac{\pi i t}r \beta)  = z^{\wzconst/2}$,  matching the first entry in \eqref{eq:I-monoidal-aux}.
\end{proof}

\begin{prop}\label{prop:functor-Rep(algB)_E-to-D_F}
The functor $\fcAB$ and the isomorphism $g$
 induce a full, faithful, exact monoidal functor 
 $\fcAB_g : \Repss(\algB)_{\ptfE} \to \CatD_F$. 
\end{prop}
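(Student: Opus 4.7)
The plan is to transport the object $(X,m) \in \Rep_{\mathrm{ss}}(\algB)_\ptfE$ along $\fcAB$ and convert the action via $g$. Concretely, I would define
\begin{align*}
	\fcAB_g(X,m) \,:=\, \bigl(\fcAB(X),\, \fcAB(m)\circ(g^{-1}\ot_{A_r}\id_{\fcAB(X)})\bigr),
\end{align*}
where $\fcAB(m)\colon \fcAB(\tilde\ptfE)\ot_{A_r}\fcAB(X) = \fcAB(\tilde\ptfE\ot X) \to \fcAB(X)$ comes from applying the monoidal equivalence $\fcAB$ (Proposition \ref{prop:fcAB}), and on morphisms I would just set $\fcAB_g(f) := \fcAB(f)$. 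The first check is that $\fcAB_g(f)$ satisfies the diagram in Definition \ref{df:catdf}; this is immediate from functoriality of $\fcAB$ and naturality of $g^{-1}\ot_{A_r}(-)$.

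For full faithfulness, I would note that a morphism $f\colon (X,m)\to(X',m')$ in $\Rep_{\mathrm{ss}}(\algB)_\ptfE$ is a morphism $\bar f\colon X\to X'$ in $\Rep_{\mathrm{ss}}(\algB)$ satisfying $\bar f\circ m = m'\circ(\id_{\tilde\ptfE}\ot\bar f)$, and similarly on the $\CatD_F$-side. Since $\fcAB$ is fully faithful as an equivalence and $g^{-1}\ot_{A_r}\id$ is invertible, these two conditions correspond bijectively; so $\fcAB_g$ is full and faithful. Exactness of $\fcAB_g$ reduces by Lemma \ref{lem:exact_iff} to exactness of the underlying $\fcAB$, which holds since $\fcAB$ is an equivalence of abelian categories.

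The heart of the argument — and the step I expect to be the main obstacle — is monoidality. Here I would use Lemma \ref{lem:FFnot} crucially. The tensor product in $\Rep_{\mathrm{ss}}(\algB)_\ptfE$ has the form
\begin{align*}
	T_\ptfE(m,n) \,=\, (m\ot\id)\circ\alpha^{-1} \,+\, (\id\ot n)\circ\alpha\circ(\varphi_{\ptfE,X}\ot\id)\circ\alpha^{-1},
\end{align*}
and analogously $T_F(-,-)$ is defined using $\varphi_{F,-}$. Writing out $\fcAB_g((X,m)\ot(Y,n))$, the first summand transports directly under $\fcAB$ and $g^{-1}$, while the second summand produces an occurrence of $\fcAB(\varphi_{\ptfE,X})$ which, by \eqref{dg:lem-FFnot}, equals $(\id_{\fcAB(X)}\ot_{A_r}g^{-1})\circ\varphi_{F,\fcAB(X)}\circ(g\ot_{A_r}\id_{\fcAB(X)})$. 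Substituting this and using that $\fcAB$ is monoidal (so $\fcAB(\alpha)$ matches the associator in $\CatD$) gives the required identification $\fcAB_g((X,m)\ot(Y,n))\cong \fcAB_g(X,m)\ot\fcAB_g(Y,n)$ via the monoidal structure morphism of $\fcAB$.

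Finally, I would verify the unit compatibility (trivially, since both units have zero action) and the standard pentagon/triangle coherence diagrams — these follow from the fact that $\fcAB$ is already monoidal and that the associators and unit isomorphisms of $\CatD_F$ and $\Rep_{\mathrm{ss}}(\algB)_\ptfE$ are inherited from the ambient categories, as noted after Definition \ref{df:catdf}. The only nontrivial bookkeeping is tracking $g^{-1}$ through the two summands of $T$; Lemma \ref{lem:FFnot} is precisely what makes this bookkeeping close.
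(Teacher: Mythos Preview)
Your proposal is correct and follows essentially the same approach as the paper: define $\fcAB_g(X,m) = (\fcAB(X),\fcAB(m)\circ(g^{-1}\ot_{A_r}\id))$, invoke Lemma~\ref{lem:FFnot} for monoidality, and reduce exactness to Lemma~\ref{lem:exact_iff} together with exactness of $\fcAB$. The paper's proof is considerably terser (it does not spell out the full/faithful part or the bookkeeping with $g^{-1}$ through the two summands of $T$), so your more detailed unpacking of how \eqref{dg:lem-FFnot} makes the half-braidings match is a faithful elaboration rather than a different route.
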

\begin{proof}
The functor $\fcAB_g$ sends an object $(X,m)$ of $\Repss(\algB)_{\ptfE}$ to 
 \begin{align} 
 	(\fcAB(X),\, \fcAB(m) \circ (g^{-1} \ot \id)) 
 	\in (A_r\BMod[\Cat]A_r)_F \end{align} 
and a morphism $f$ to $\fcAB(f)$. This functor is monoidal by Lemma \ref{lem:FFnot}. It is exact 
 since a complex in $(A_r\BMod[\Cat]A_r)_F$ is exact if and only if the 
 underlying complex in $A_r\BMod[\Cat]A_r$ is exact (cf.\ Lemma \ref{lem:exact_iff}) 
 and since $\fcAB$ is an exact functor. 
\end{proof}

We can now ``bosonise'' $\Repss(\algB)_{\ptfE}$ in very much the 
same way as in Section \ref{ssec:unc_bosonisation}. 
Define $\algPtBim$ as the 
bialgebra over $\C$ which, as a unital algebra, is the extension of $\algB$ by 
$f^+, f^-, \bar f^+, \bar f^-$ with relations ($q$ as in \eqref{eq:def-of-q}) 
\begin{align} \label{eq:comm_rels_algPtBim}
 k f^\pm k^{-1} = q^{\pm 2} f^\pm \ , \qquad k \bar f^\pm k^{-1} = q^{\pm 2} \bar f^\pm \ ,
\end{align}
together with the coproduct on $k,k^{-1}$ inherited from $\algB$, and 
for the other generators: 
\begin{gather} \label{eq:comp_algPtBim-coprod}
 \Delta(f^\pm) = f^\pm \ot \one + k^{\pm \wzconst/2} \ot f^\pm \ , \qquad 
 \Delta(\bar f^\pm) = \bar f^\pm \ot \one + k^{\mp \wzconst/2} \ot \bar f^\pm \ .
\end{gather}
Then construct a functor 
$\RepAss{\algB}(\algPtBim) \to \Repss(\algB)_{\ptfE}$ 
 as follows: If $(V,\rho)$ is an object of 
$\RepAss{\algB}(\algPtBim)$, then 
let $m: \ptfE \ot V \to V$ be the map 
\begin{align} \label{eq:comp--def-of-m}
m(\phi \ot v) := \rho(\phi^+ \cdot f^+ + \phi^- \cdot f^- 
		+ \bar \phi^+ \cdot \bar f^+ + \bar \phi^- \cdot \bar f^-).v \ ,
\end{align} 
where 
$\phi = \phi^+ \oplus \phi^- \oplus \bar \phi^+ \oplus \bar \phi^- 
	\in \alpha^+(\C_\omega \oplus \C_{-\omega}) \oplus \alpha^-(\C_\omega \oplus \C_{-\omega}) 
	= F$.
The functor $\RepAss{\algB}(\algPtBim) \to \Repss(\algB)_{\ptfE}$ 
maps $(V,\rho)$ to $((V,\rho|_\algBim),m)$ and acts 
as the identity on hom-sets. 
It is in fact an isomorphism, as is 
seen in very much the same way as in Theorem~\ref{thm:algA_F}. 
Altogether, we obtain:

\begin{prop} \label{prop:fcAlgPtBim}
	The categories $\RepAss{\algB}(\algPtBim)$ and 
	$\Repss(\algB)_{\ptfE}$ are strictly isomorphic 
	as monoidal categories. 
\end{prop}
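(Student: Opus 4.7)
The plan is to mirror the argument in Theorem \ref{thm:algA_F}: first construct the functor on objects and verify its inverse, then check compatibility with tensor products using the coproduct \eqref{eq:comp_algPtBim-coprod} and the half-braiding \eqref{eq:halfbraiding_F0}.

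First I would check that the functor $\RepAss{\algB}(\algPtBim) \to \Repss(\algB)_{\ptfE}$ given in the paragraph preceding the proposition is well-defined. Given $(V,\rho)\in\RepAss{\algB}(\algPtBim)$, the map $m$ in \eqref{eq:comp--def-of-m} must be a morphism in $\Repss(\algB)$, i.e.\ intertwine the $\algB$-action. Since $\algB$ is generated by $k^{\pm 1}$, this reduces to showing $\rho(k).m(\phi \ot v) = m((\chi \ot \rho|_\algB)(\Delta(k))(\phi \ot v))$, where $\chi$ is the action of $\algB$ on $\tilde\ptfE$ (by $k.1 = q^{\pm 2}$ on the four summands). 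The commutation relations \eqref{eq:comm_rels_algPtBim} rewrite $\rho(k)\rho(f^\pm) = q^{\pm 2}\rho(f^\pm)\rho(k)$ (and analogously for $\bar f^\pm$), which matches $\Delta(k) = k \ot k$ and the eigenvalues of $k$ on the four summands. This is the direct analogue of \eqref{eq:m-morph-of-h-mods}.

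Next I would construct the inverse functor. Given $(V,m) \in \Repss(\algB)_{\ptfE}$, the morphism $m : \tilde\ptfE \ot V \to V$ yields four linear maps on $V$ (the images of the four basis vectors $1 \oplus 0 \oplus 0 \oplus 0$, etc.), which I call $\rho(f^+), \rho(f^-), \rho(\bar f^+), \rho(\bar f^-)$. Together with $\rho(k), \rho(k^{-1})$ they define a representation of the free unital algebra on $\{k^{\pm 1}, f^\pm, \bar f^\pm\}$; because $m$ is an $\algB$-intertwiner and $k$ acts by $q^{\pm 2}$ on the summands of $\tilde\ptfE$, the relations \eqref{eq:comm_rels_algPtBim} are satisfied, so $\rho$ descends to $\algPtBim$. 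These constructions are mutually inverse on objects, and on morphisms the functor is the identity, so we obtain an isomorphism of categories.

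For monoidality the task is to verify that, given two modules $V,W$, the action of $f^\pm$ (resp.\ $\bar f^\pm$) on $V \ot W$ induced by the coproduct $\Delta(f^\pm) = f^\pm \ot \one + k^{\pm t/2}\ot f^\pm$ agrees with the morphism $T(m_V, m_W)$ of \eqref{eq:Tpmorph}, computed with the half-braiding \eqref{eq:halfbraiding_F0} of $\ptfE$. Here the key point is that the factor $k^{\pm t/2}$ in the coproduct is exactly the ``R-matrix'' built into $\varphi_{\ptfE,-}$: on the $\C_{q^{2}}$-summand of $\tilde\ptfE$ the half-braiding moves $\phi^+$ past $v \in V$ by acting on $v$ with $k^{t/2}$, etc. This is a direct check summand by summand, completely parallel to the final computation in the proof of Theorem \ref{thm:algA_F}; strictness of the monoidal isomorphism follows since both the identification of objects and the structure maps $\fcAlgPtBim_0, \fcAlgPtBim_2$ are the identity. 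The main (though still routine) obstacle is keeping track of the signs of $t/2$ in \eqref{eq:halfbraiding_F0} versus \eqref{eq:comp_algPtBim-coprod} for the four generators simultaneously.
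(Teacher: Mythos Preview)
Your proposal is correct and follows precisely the approach the paper indicates: the paper does not give a separate proof but simply says the isomorphism ``is seen in very much the same way as in Theorem~\ref{thm:algA_F}'', and your write-up carries out exactly that transfer (with the simplification that no topological completion is needed since $\algPtBim$ is an ordinary algebra). The sign bookkeeping you flag between \eqref{eq:halfbraiding_F0} and \eqref{eq:comp_algPtBim-coprod} works out summand by summand as you describe.
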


\subsection{Conventions for $U_q(\lsl_2)$ and $U_q(L\lsl_2)$}
\label{ssec:comp-uq}

In Section \ref{sec:uncompboson} we considered quantum groups 
$\tilde U_\hbar(\lsl_2)$ and $\tilde U_\hbar(L\lsl_2)$ whose 
Cartan-like subalgebras were generated by a primitive element $h$. 
Now we use the standard versions 
of the deformed enveloping algebras of $\lsl_2$ and 
$L\lsl_2$ with a group-like generator $k$, which satisfies the 
relations of $e^{\hbar h}$ in $\tilde U_\hbar(\lsl_2)$ 
and $\tilde U_\hbar(L\lsl_2)$, respectively, if $q = e^\hbar$. 

\begin{df} \label{df:qgrps_comp} (cf.\ \cite{CP91})\footnote{ \cite{CP91} 
	define $U_q(\hat\lsl_2)$, while $U_q(L\lsl_2)$, as in our definition, is 
	the quotient of $U_q(\hat\lsl_2)$ by the Hopf ideal generated by 
	$k_0 - k_1^{-1}$}
	Let $q \in \C^\times \setminus \{-1,1\}$. 
\begin{enumerate}[(i)]
	\item Define $U_q(\lsl_2)$ as the Hopf algebra, generated as an associative unital algebra by elements 
		$e^\pm$, $k$, $k^{-1}$ subject to the relations 
		\begin{align} kk^{-1} = 1 = k^{-1}k \ , \qquad ke^\pm k^{-1} = q^{\pm2}e^\pm \ , 
			\qquad [e^+,e^-] = \frac{k - k^{-1}}{q - q^{-1}} \ , \end{align}
		together with the coproduct and antipode given by 
		\begin{gather} \Delta(e^+) = e^+ \ot k + \one \ot e^+ \ , \qquad 
		 \Delta(e^-) = e^- \ot \one + k^{-1} \ot e^- \ , \\
		 \Delta(k) = k \ot k \ , \qquad \Delta(k^{-1}) = k^{-1} \ot k^{-1} \ , \nonumber\\
		 S(k) = k^{-1} \ , \qquad S(k^{-1}) = k \ , \qquad S(e^+) = -e^+k^{-1} \ , \qquad S(e^-) = -ke^- \ . \nonumber
		\end{gather}
	\item Define $U_q(L\lsl_2)$ as the Hopf algebra, 
		generated as an associative unital algebra 
		by elements $e_0^\pm$, $e_1^\pm$ and $k = k_0 = k_1^{-1}$, 
		$k^{-1} = k_0^{-1} = k_1$ subject 
		to the relations ($i,j=0,1$) 
		\begin{gather} kk^{-1} = 1 = k^{-1}k \ , \qquad k_ie_i^\pm k_i^{-1} = q^{\pm2}e_i^\pm \ , \qquad 
			[e_i^+,e_j^-] = \delta_{i,j}\frac{k_i - k_i^{-1}}{q - q^{-1}} \ , \\
		 (e_i^+)^3e_j^+ - [3]_{q} (e_i^+)^2e_j^+e_i^+ + [3]_{q} e_i^+e_j^+(e_i^+)^2 - e_j^+(e_i^+)^3 = 0 \ , \nonumber\\
		 (e_i^-)^3e_j^- - [3]_{q} (e_i^-)^2e_j^-e_i^- + [3]_{q} e_i^-e_j^-(e_i^-)^2 - e_j^-(e_i^-)^3 = 0 \ , \nonumber
		\end{gather}
		together with the coproduct and antipode given by 
		\begin{gather} \Delta(e_i^+) = e_i^+ \ot k_i + \one \ot e_i^+ \ , \qquad 
		\Delta(e_i^-) = e_i^- \ot \one + k_i^{-1} \ot e_i^- \ , \\
		 \Delta(k) = k \ot k \ , \qquad \Delta(k^{-1}) = k^{-1} \ot k^{-1} \ , \nonumber\\
		 S(k) = k^{-1} \ , \qquad S(k^{-1}) = k \ , \qquad S(e_i^+) = -e_i^+k_i^{-1} \ , 
			\qquad S(e_i^-) = -k_ie_i^- \ . \nonumber \end{gather}
		Denote by $U_q(L\lsl_2)^+, U_q(L\lsl_2)^-$ the sub(-Hopf-)algebras of $U_q(L\lsl_2)$ 
		generated by $e_0^+, e_1^+, k, k^{-1}$ and $e_0^-,e_1^-,k,k^{-1}$, respectively.
\end{enumerate}
\end{df}

For $q$ not a root of unity the Grothendieck ring $\Gr(\Repfd(U_q(L\lsl_2)))$ of the 
category of finite dimensional representations of $U_q(L\lsl_2)$ is commutative \cite[Corollary 2]{FR98}. 
Otherwise, if $q$ is a root of unity, this may fail, 
see Section \ref{ssec:comp-TQ_cycl} below.

\subsection{Commutation condition in $\CatD_F$ and representations of $U_q(L\lsl_2)$}
\label{ssec:comp-functor-from-uq}

Recall the choice of the bulk field morphism $b_{\zeta,\xi}$ made in \eqref{eq:com-b_ze-xi_def}. We would like to formulate the commutation condition of $\CatD_F$ for $b_{\zeta,\xi}$ in $\Repss(\algB)_{\ptfE}$ as defined in Section~\ref{ssec:comp-bosonisation}.
To this end write $\tilde E = \tilde E^+ \oplus \tilde E^-$ with $\tilde E^+,\tilde E^- := \C_{q^{2}} \oplus \C_{q^{-2}}$ and denote by $j_{z,z'} : \C_z \ot \C_{z'} \stackrel\sim\to \C_{zz'}$ for the canonical identifications in $\Repss(\algB)$ determined 
by $1 \ot 1 \to 1$. Set 
\begin{align}
 d_{\zeta,\xi} : \tilde\ptfE^+ \ot \tilde\ptfE^- \to \one \ , 
 \qquad d_{\zeta,\xi} := \zeta \cdot j_{q^{2},q^{-2}} + \xi \cdot j_{q^{-2},q^{2}} \ .
\end{align}
Then $\fcAB(d_{\zeta,\xi}) = b_{\zeta,\xi} \circ (g|_{\tilde\ptfE^+} \ot g|_{\tilde\ptfE^-})$, where 
$\fcAB$ was defined in \eqref{eq:RepB-2-AmodA-def} and $g$ is the isomorphism from \eqref{eq:iso-g-E-UF}.

If $(V,\rho)$ is an object of $\RepAss{\algB}(\algPtBim)$, then the commutation 
condition (with respect to $d_{\zeta,\xi}$) for its image in $\Repss(\algB)_{\ptfE}$ under the isomorphism 
of Proposition \ref{prop:fcAlgPtBim} reads 
\begin{gather}
 \rho(f^+ \bar f^- - q^{-t} \bar f^- f^+) = \zeta \cdot \left( \id_V - \rho(k^{t}) \right) \ , \\
 \rho(f^- \bar f^+ - q^{-t} \bar f^+ f^-) = \xi \cdot \left( \id_V - \rho(k^{-t}) \right) \ , \nonumber\\
 \rho(f^+ \bar f^+ - q^{t} \bar f^+ f^+) = 0 = \rho(f^- \bar f^- - q^{t} \bar f^- f^-) \ . \nonumber
\end{gather}
This can be seen in a similar fashion as in Section \ref{ssec:unc_U_h-YD}.
Multiplying with $k^{\pm t/2}$, these relations may equivalently 
be written as 
\begin{gather} \label{eq:algBF_comm_cond}
 \rho([f^+ k^{-t/2}, \bar f^-]) = \zeta q^{t} \cdot \rho\left( k^{-t/2} - k^{t/2} \right) \ , \\
 \rho([f^- k^{t/2}, \bar f^+]) = \xi  q^{t} \cdot \rho\left( k^{t/2} - k^{-t/2} \right) \ , \nonumber\\
 \rho([f^+ k^{-t/2}, \bar f^+]) = 0 = \rho([f^- k^{t/2}, \bar f^-]) \ . \nonumber
\end{gather}
Comparing to Definition \ref{df:qgrps_comp}\,(ii) this shows that for $t=-2$ there is a surjective bialgebra homomorphism $\psi : \algPtBim \to U_q(L\lsl_2)$ 
given by 
\begin{gather}\label{eq:homom-to-U_q--compactified-case}
 k \mapsto k_0 \ , \qquad f^+ \mapsto e_0^+k_0^{-1} \ , 
 \qquad f^- \mapsto \xi \frac{q - q^{-1}}{q^2} e_1^+k_1^{-1} \ , \nonumber\\
	\bar f^+ \mapsto e_1^- \ , 
	\qquad \bar f^- \mapsto \zeta \frac{q - q^{-1}}{q^2} e_0^-  \ . 
\end{gather}
As in Section \ref{ssec:unc_U_h-YD}, one checks that $\psi$ 
respects the relations \eqref{eq:algBF_comm_cond}. 
Let 
$\F_{\zeta,\xi}$ be the functor from $\RepAss{\langle k \rangle}(U_q(L\lsl_2))$ to $\RepAss{\algB}(\algPtBim)$ pulling back representations of $U_q(L\lsl_2)$ along $\psi$.
Here, $\langle k \rangle$ stands for the sub(-Hopf-)algebra of 
$U_q(L\lsl_2)$ generated by $k$. 

\medskip

Denote by $\CatD_F'$ the full subcategory of $\CatD_F$ 
satisfying the commutation condition with $b_{\zeta,\xi}$.
Combining the functor $\F_{\zeta,\xi}$ with previous results, we obtain:

\begin{thm}
\label{thm:exrepcatfuncmon_cpt1}
Suppose the parameter $t$ in \eqref{eq:omega-via-t} is equal to $-2$. 
	The monoidal functor 
\begin{align}
\RepAss{\langle k \rangle}(U_q(L\lsl_2))
&\xrightarrow{\F_{\zeta,\xi}}
\RepAss{\algB}(\algPtBim)
\xrightarrow[\text{\rm Prop.\,\ref{prop:fcAlgPtBim}}]{\sim}
\Repss(\algB)_{\ptfE}
\nonumber\\
&
\xrightarrow[\text{\rm Prop.\,\ref{prop:functor-Rep(algB)_E-to-D_F}}]{\fcAB_g}
\CatD_F'
\xrightarrow[\text{\rm Thm.\,\ref{thm:YD-com-rel-mon}}]{\sim}
\talg(F^+)\YD{\rho}\talg(F^-)
\end{align}
identifies 
	$\RepAss{\langle k \rangle}(U_q(L\lsl_2))$ 
	with a full subcategory of 
	$\talg(F^+)\YD{\rho}\talg(F^-)$. 
\end{thm}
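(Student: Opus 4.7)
The plan is to verify that the four functors in the displayed composition are each (i) well-defined, (ii) monoidal, and (iii) fully faithful, so that the composition inherits all of these properties; the last three are already established in the excerpt (Propositions~\ref{prop:fcAlgPtBim} and~\ref{prop:functor-Rep(algB)_E-to-D_F}, and Theorem~\ref{thm:YD-com-rel-mon}), so the main work concentrates on $\F_{\zeta,\xi}$ and on checking that its image lands in $\CatD_F'$.

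First, I would verify in detail that the assignment $\psi$ in~\eqref{eq:homom-to-U_q--compactified-case} extends to a bialgebra homomorphism $\algPtBim \to U_q(L\lsl_2)$. Algebra-compatibility amounts to checking the commutation relations~\eqref{eq:comm_rels_algPtBim}; e.g.\ $\psi(k f^+ k^{-1}) = k_0 (e_0^+ k_0^{-1}) k_0^{-1} = q^{2} e_0^+ k_0^{-1} = \psi(q^{2} f^+)$, and similarly for $\bar f^\pm$ and $f^-$. Coalgebra-compatibility means checking that~\eqref{eq:comp_algPtBim-coprod} goes to the coproducts on $e_i^\pm k_i^{-1}$, $e_i^-$ in $U_q(L\lsl_2)$; this is the exact analogue of the calculation performed for $\tilde U_\hbar(L\lsl_2)$ in Section~\ref{ssec:unc_U_h-YD} and is straightforward once one remembers the $t=-2$ convention.

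Second, I would check that $\psi$ respects the ideal generated by the commutation conditions~\eqref{eq:algBF_comm_cond}. For $t=-2$ the right-hand sides become $\zeta q^{-2}(k - k^{-1})$ and $\xi q^{-2}(k^{-1} - k)$, which upon applying $\psi$ give
\[
\psi\big([f^+ k, \bar f^-]\big) \;=\; \zeta\,\tfrac{q-q^{-1}}{q^{2}}\,[e_0^+, e_0^-] \;=\; \zeta q^{-2}(k_0 - k_0^{-1}),
\]
matching the image of the right-hand side, and analogously for the other three relations (the two Serre-like equalities $[f^\pm k^{\mp t/2},\bar f^\pm]=0$ become trivial identities in $U_q(L\lsl_2)$ once the $e_i^{\pm}$ sit in distinct root spaces).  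Consequently, every pullback of a $U_q(L\lsl_2)$-representation along $\psi$ is a representation of $\algPtBim$ which automatically satisfies~\eqref{eq:algBF_comm_cond}, so $\F_{\zeta,\xi}$ lands in the full subcategory $\CatD_F'$ after passing through Propositions~\ref{prop:fcAlgPtBim} and~\ref{prop:functor-Rep(algB)_E-to-D_F}.

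Finally, I would observe that $\F_{\zeta,\xi}$ is fully faithful because $\psi$ is surjective (a $\C$-linear map between the underlying spaces of two $U_q(L\lsl_2)$-representations is a $U_q(L\lsl_2)$-intertwiner iff it is $\algPtBim$-linear with respect to the pulled-back actions), and it is monoidal since $\psi$ is a bialgebra map and therefore pullback commutes with the tensor product defined via $\Delta$. Composing with the monoidal equivalences/embeddings of Propositions~\ref{prop:fcAlgPtBim}, \ref{prop:functor-Rep(algB)_E-to-D_F} and Theorem~\ref{thm:YD-com-rel-mon}, and restricting to the subcategory on which $\langle k\rangle$ acts semisimply (so that the target lies in $\Repss(\algB)$ as required by $\fcAB$), yields the claimed full embedding into $\talg(F^+)\YD{\rho}\talg(F^-)$. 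The main obstacle I expect is bookkeeping: keeping the signs and powers of $q$ straight under the $t=-2$ specialisation while verifying both the bialgebra-map property and the stability of the commutation ideal under $\psi$; none of the steps is genuinely hard, but consistency with the conventions fixed in~\eqref{eq:def-of-q} and~\eqref{eq:halfbraiding_F0} must be monitored throughout.
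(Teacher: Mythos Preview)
Your proposal is correct and follows essentially the same route as the paper: the theorem has no separate proof environment in the paper because all the work is done in the text preceding it, namely verifying that $\psi$ in~\eqref{eq:homom-to-U_q--compactified-case} is a surjective bialgebra homomorphism respecting the relations~\eqref{eq:algBF_comm_cond}, so that pullback along $\psi$ gives a full, faithful monoidal functor $\F_{\zeta,\xi}$ landing in the commutation-condition subcategory, which then composes with the already-established equivalences and embeddings. Your explicit verification of the commutator identities and your remark that surjectivity of $\psi$ yields full faithfulness are exactly what the paper leaves to the reader by pointing to the analogous computation in Section~\ref{ssec:unc_U_h-YD}.
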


\subsection{Example: Cyclic representations}
\label{ssec:comp-TQ_cycl}

By \eqref{eq:def-of-q} 
and \eqref{eq:omega-via-t}, $q$ is a root of unity iff $\frac{\pi i}{\kappa r^2} \in \Q$. In this case,
$U_q(L\lsl_2)$ allows for so-called cyclic representations. 

\begin{rem}
According to Remark \ref{rem:unc-boson--cat-VOA}, when relating the results in Sections \ref{sec:uncompboson} and \ref{sec:compboson} to the free boson, i.e.\ when choosing the braided category $\Cat$ to be representations of the Heisenberg VOA, we should take $\kappa = i \pi$. Then the above condition on the existence of cyclic representations requires $r^2 \in \Q$. The conformal weights of the fields in $A_r$ are of the form $\tfrac12 m^2 r^2 \, (\mathrm{mod}\,\Z)$ for $m \in \Z$. This contains fields of integer weight if and only if $r^2 \in \Q$. In these cases the compactified boson has an extended chiral symmetry and becomes a rational CFT. Thus the cyclic representations occur precisely for the rational free boson.
\end{rem}

Let us now describe the cyclic representations, following \cite{Kor03}.\footnote{ 
To write down results from \cite{Kor03} in our conventions, 
we fix the bialgebra isomorphism 
from our $U_q(\lsl_2)$ to Korff's $U_q(\lsl_2)$ sending 
$e^+$ to $f$, $e^-$ to $e$ and $k$ to $k^{-1}$; 
and we fix the bialgebra isomorphism from our $U_q(L\lsl_2)$ to Korff's $U_q(\tilde\lsl_2)$ 
determined by $e_j^+ \mapsto f_j$, $e_j^- \mapsto e_j$ and $k_j \mapsto k_j^{-1}$ 
($j=0,1$).} 
If $q$ is not a root of unity, the centre of 
$U_q(\lsl_2)$ is generated by the quadratic 
Casimir $\mb c := qk^{-1} + q^{-1}k + (q - q^{-1})^2 e^+e^-$.
For $q$ a root of unity, as we assume here, the centre is enlarged by the additional generators 
\begin{align}
	\mb x := \left( (q - q^{-1})e^- \right)^{N'} \ , \qquad 
	\mb y := \left( (q - q^{-1})e^+ \right)^{N'} \ , \qquad 
	\mb z^{\pm 1} := k^{\mp N'} \ , 
\end{align}
where $N'$ is the order of $q^2$ as a root of unity. 
An irreducible representation is called \emph{semi-cyclic} if $\mb x$ or $\mb y$ act by nonzero endomorphisms and it is called \emph{cyclic} if both do. 
A cyclic or semi-cyclic irreducible representation $V^p$ of $U_q(\lsl_2)$ is 
-- up to isomorphism -- determined by the eigenvalues
$p = (x,y,z,c)$ of $\mathbf x, \mathbf y, \mathbf z, \mathbf c$. 
For $w \in \C^\times$, 
denote by $V^p(w)$ the evaluation representation of $V^p$, 
i.e.\ the pullback of $V^p$ along the evaluation homomorphism 
$\ev_w: U_q(L\lsl_2) \to U_q(\lsl_2)$, defined in a 
similar way as in Section \ref{ssec:uncomp_TQ-op}.\footnote{\label{fn:Korff-ev-param} Note 
that the evaluation homomorphism $\ev_w$ in Section \ref{ssec:uncomp_TQ-op} differs 
from the one in \cite[Sect.\,2.3]{Kor03}. If we denote the latter by $\ev^{\text{Kor}}_w$ then 
$\ev_w = \ev^{\text{Kor}}_{q/w}$. 
This implies in particular that Korff's $\mu$, the quotient 
of the evaluation parameters for a tensor product $V^p(w) \ot V_1(w')$, equals 
$w/{w'}$ in our conventions.} 
Let $V_1$ denote the fundamental representation of 
$U_q(\lsl_2)$ analogously to Section \ref{ssec:uncomp_TQ-op}. 
If $w'/w + w/w' \neq c$ and 
$q$ is an odd root of unity, then we have 
$V^p(w) \ot V_1(w') \cong V_1(w') \ot V^p(w)$.
If $w'/w + w/w' \neq c$ and 
$q$ is an even root of unity, then there is no 
intertwiner $V^p(w) \ot V_1(w') \to V_1(w') \ot V^p(w)$ 
at all. 

\begin{rem}
Denote by $\Repfd(-)$ the finite dimensional representations of a given algebra.
One can check that $V^p(w) \ot V_1(w')$ and 
$V_1(w') \ot V^p(w)$ are irreducible in case of 
$w'/w + w/w' \neq c$. 
Since the Grothendieck ring $\Gr(\Repfd(U_q(L\lsl_2)))$ 
is freely generated, as an abelian group, 
by the classes of simple objects (Jordan-H\"older theorem), 
the absence of an intertwiner implies that $\Gr(\Repfd(U_q(L\lsl_2)))$ 
is not commutative for $q$ an even root of unity.
By Theorem~\ref{thm:exrepcatfuncmon_cpt1}, this also implies that 
the subcategory of dualisable Yetter-Drinfeld modules in $\talg(F^+)\YD{\rho}\talg(F^-)$ 
has a non-commutative 
Grothendieck ring. 
Note, however, that (semi-)cyclic representations $V^p(w)$ of 
$U_q(L\lsl_2)$ are not representations of the Nichols algebras 
of $F^\pm$: By definition or by \cite[Thm.\,2.9]{Schau96}, the Nichols algebra 
of $F^+$ is the quotient of $\talg(F^+)$ by the kernel of 
the braided symmetriser $\Sym$, as in Lemma \ref{lem:extHopf}. We have 
$\Sym|_{\talg(\C_{q^2})} = \sum_{n \in \N} q^{-\frac{n(n-1)}2}[n]_q!\cdot\iota_n\circ\pi_n$, 
where $[n]_q! := [1]_q[2]_q \cdots [n]_q$. 
In the root of unity case, $[n]_q = 0$ for $n \geq N'$. 
Hence, representations of $U_q(\lsl_2)$ where $(e^-)^{N'}$ acts 
nonzero cannot correspond to representations of the Nichols algebra of 
$F^+$. 
\end{rem}

If $w'/w + w/w' = c$, then the tensor products become 
decomposable and there are exact sequences 
\begin{align} \label{eq:UqSeq}
	\begin{CD} 0 @>>> V^{p_1}(q w) @>>> 
	V^p(w) \ot V_1(w') 
	@>>> V^{p_2}(q^{-1} w) @>>> 0 \ , \end{CD} \nonumber \\[.5em]
	\begin{CD} 0 @>>> V^{p_3}(q^{-1} w) @>>> 
	V_1(w') \ot V^p(w) 
	@>>> V^{p_4}(q w) @>>> 0 \ , \end{CD} \end{align}
where 
\begin{align}
	p_1 &= \big(x \,,\, q^{N'}y\,,\, q^{N'}z\,,\, q \tfrac{w}{w'} + q^{-1}\tfrac{w'}{w} \big) &,~~~~ 
	p_2 &= \big(x\,,\, q^{N'}y\,,\, q^{N'}z\,,\, q^{-1} \tfrac{w}{w'} + q \tfrac{w'}{w} \big) \ , \nonumber\\
	p_3 &= \big(q^{N'}x \,,\, y \,,\, q^{N'}z \,,\, q^{-1} \tfrac{w}{w'} + q \tfrac{w'}{w} \big) &,~~~~
	p_4 &= \big(q^{N'}x \,,\, y \,,\, q^{N'}z \,,\, q \tfrac{w}{w'} + q^{-1} \tfrac{w'}{w} \big) \ .
\end{align} 
The first sequence is a rewriting of the sequence given in \cite[Sect.\,4]{Kor03}. 
The second sequence may be obtained by dualising the first one.\footnote{ 
To do so, use \cite[Prop.\,4.5]{CP91} and 
$S(\mathbf x) = -\mathbf{xz}^{-1}$, $S(\mathbf y) = -\mathbf{yz}$, 
$S(\mathbf z) = \mathbf{z}^{-1}$ and $S(\mathbf c) = \mathbf{c}$, 
where $S$ is the antipode in $U_q(\lsl_2)$. 
Hence, $V^p(w)^* \cong V^{p'}(q^2w)$, if 
$p = (x,y,z,c)$ and $p' = (-x/z,-yz,z^{-1},c)$. 
Also, $V^p(w) \cong V^{p'}(q^{-2}w)^*$.}

Denote by $T_1(w')$ the class of $V_1(w')$ in the 
Grothendieck ring of $\Repfd(U_q(L\lsl_2))$, and 
by $Q_p(w)$ the class of $V^p(w)$. We distinguish two cases:
\begin{itemize}
\item 
$w'/w + w/w' \neq c$ :
If $q$ is an odd root of unity, $T_1(w')$ and $Q_p(w)$ commute, and if $q$ is an even root of unity they do not commute. 
\item $w'/w + w/w' = c$ : One obtains the $T$-$Q$-like relation 
\begin{align}
	Q_p(w)T_1(w') = Q_{p_1}(q w) + Q_{p_2}(q^{-1} w) 
		= T_1(w') Q_{p_5}(w) \ , 
\end{align} 
where $p_5 = (q^{N'}x\,,\,q^{N'}y\,,\,q^{N'}z\,,\,c)$. 
If $q$ is an odd root of unity then 
$p_5 = p$ because $q^{N'} = 1$, and $T_1(w')$ and $Q_p(w)$ commute.
If $q$ is an even root of unity, then 
$T_1(w')$ and $Q_p(w)$ do not commute, but 
``pulling $T_1$ past $Q_p$'' flips the signs of the 
parameters $x,y$ and $z$.
\end{itemize}
This concludes our discussion of cyclic representations. We have seen two new features which arise for the free boson compactified on a circle of rational radius squared. Firstly, non-local conserved charges constructed from quantum group representations no longer necessarily commute, and secondly, there is a {\em finite-dimensional} representation of the {\em full} quantum group (rather than an infinite-dimensional representation of a Borel half, cf.\ Remark \ref{rem:T-vs-Q-uncomp}) which behaves similar to a Q-operator. Being representations of the full quantum group, via Theorem \ref{thm:exrepcatfuncmon_cpt1} and the construction in Section \ref{ssec:PertDefNonlocalConsCharges}, such cyclic representations provide conserved charges for the perturbed compactified free boson.

\appendix

\section{Appendix}

\subsection{Algebraic structures in monoidal categories: standard definitions}
\label{appx:alg-etc-in-braided-cats}

In order to remind the reader 
and to fix the notation used in the paper 
we collect some standard definitions such as the 
notion of monoidal centre, algebra, Hopf algebra, module, bimodule, ... 
in a (braided) monoidal category. 

\medskip

A monoidal category is given by the data $(\Cat,\ot,\alpha,\one,\lambda,\rho)$, 
where $\Cat$ is some category, $\ot: \Cat \times \Cat \to \Cat$, 
$\one$ is the tensor unit of $\Cat$ and $\alpha$, $\lambda$ and $\rho$ are 
natural isomorphisms, $\alpha_{X,Y,Z}: (X \ot Y) \ot Z \to X \ot (Y \ot Z)$ 
(called associators), 
$\lambda_X: \one \ot X \to X$, $\rho_X: X \ot \one \to X$ (called 
unit isomorphisms), for 
$X,Y,Z \in \Cat$, subject to the usual axioms (see, e.g.\ \cite[Ch\,1.1]{BaKi00}).
If $\Cat$ is an abelian monoidal category with exact tensor product, then 
$\Gr(\Cat)$ denotes its Grothendieck ring (see, e.g.\ \cite[Ch\,2.1]{BaKi00}). 

The monoidal centre of a monoidal category is defined as follows.

\begin{figure} 
 \centering
	\begin{align*}
	 &\text{(a)} \quad
	 \raisebox{-0.5\height}{\includegraphics[height=0.18\textwidth]{img/figDZ_a.pdf}} 
	 \ := \ \varphi_{V,X}
	 &\text{(b)} \quad
	 \raisebox{-0.5\height}{\includegraphics[height=0.18\textwidth]{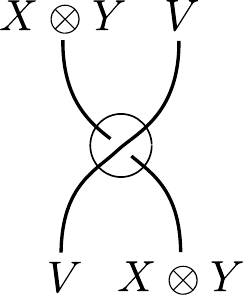}} 
	 = \raisebox{-0.5\height}{\includegraphics[height=0.18\textwidth]{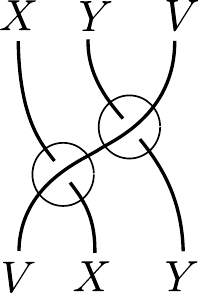}} \\&& \\
	 &\text{(c)} \quad 
	 \raisebox{-0.5\height}{\includegraphics[height=0.18\textwidth]{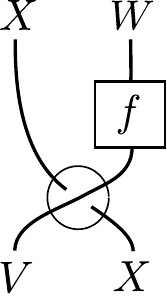}} 
	 \ = \ \raisebox{-0.5\height}{\includegraphics[height=0.18\textwidth]{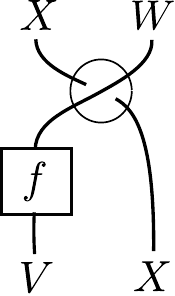}} 
	 &\text{(d)} \quad 
	 \raisebox{-0.5\height}{\includegraphics[height=0.18\textwidth]{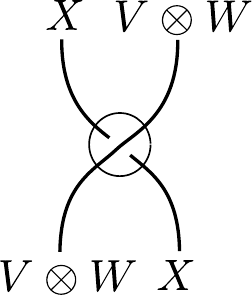}} 
	 = \raisebox{-0.5\height}{\includegraphics[height=0.18\textwidth]{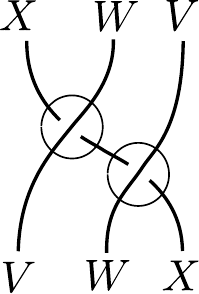}} 
	\end{align*} 
 \caption{The Drinfeld centre or monoidal centre: 
 	a) Graphical notation for the half braiding (recall footnote \ref{fn:graphics}); 
 	b)--d) axioms for the half braiding, see Definition \ref{df:DZ}.}
 \label{fig:DZ}
\end{figure}

\begin{df} \label{df:DZ} (cf.\ \cite{JS91, Ma95})
 Let $(\CatD,\ot,\one,\alpha,\lambda,\rho)$ be a monoidal category. 
 The \emph{(Drinfeld or monoidal) centre} $\DZ(\CatD)$ of $\CatD$ is the category given by 
\begin{itemize}
 \item objects: pairs $(V,\varphi_{V,-})$ where $V$ is an object of $\CatD$ and 
	$\varphi_{V,-}$ (the \emph{half-braiding}) is a natural isomorphism $V \ot (-) \to (-) \ot V$ such that 
	for all $X,Y \in \CatD$ we have (see Figure \ref{fig:DZ} (b))
	\begin{align} 
	 	\alpha_{X,Y,V} \circ \varphi_{V,X \ot Y} \circ \alpha_{V,X,Y} 
		= (\id_X \ot \varphi_{V,Y}) \circ \alpha_{X,V,Y} \circ (\varphi_{V,X} \ot \id_Y).
	\end{align}
 \item morphisms: a morphism from $(V,\varphi_{V,-})$ to $(W,\varphi_{W,-})$ is a morphism 
	$f: V \to W$ in $\CatD$ such that for each object $X \in \CatD$ we have 
	(see Figure \ref{fig:DZ} (c))
	\begin{align} \label{eqn:DZmorph}
	 	(\id_X \ot f) \circ \varphi_{V,X} = \varphi_{W,X} \circ (f \ot \id_X).
	\end{align}
\end{itemize}
\end{df}

 Define a tensor product $\ot$ on $\DZ(\CatD)$ by $(V,\varphi_{V,-}) \ot (W,\varphi_{W,-}) := (V \ot W, \varphi_{V \ot W,-})$, 
 where $\varphi_{V \ot W,-}$ is given for objects $X \in \CatD$ by (see Figure \ref{fig:DZ} (d)) 
 \begin{align} \varphi_{V \ot W,X} := \alpha_{X,V,W} \circ (\varphi_{V,X} \ot \id_W) 
	\circ \alpha_{V,X,W}^{-1} \circ (\id_V \ot \varphi_{W,X}) \circ \alpha_{V,W,X}.
 \end{align}
 Define a braiding $c$ on $\DZ(\CatD)$ by $c_{(V,\varphi_{V,-}),(W,\varphi_{W,-})} := \varphi_{V,W}$. 
This turns $\DZ(\CatD)$ into a braided monoidal category \cite{Ma95}.
Note that the forgetful functor 
\begin{align} \label{eq:forgetful}
	U : \DZ(\CatD) \to \CatD, \quad (V,\varphi_{V,-}) \to V, \quad f \mapsto f
\end{align}
is strictly monoidal.

\medskip

Next we define (co-/Bi-/Hopf-)algebras, modules and bimodules within 
a monoidal category. 
We refer to e.g.\ \cite{Ma95,Be95} for more details; our notation follows \cite{FRS02}.
The tensor algebras $\talg(F)$ used in Sections \ref{sec:tensor_algebras} 
and \ref{sec:comm_cond_YD} are Hopf algebras, and the Frobenius algebras $A$ in 
rational CFT (see e.g.\ Remark \ref{rem:top-def-in-RCFT}) are algebras 
and coalgebras. Bimodules are needed 
in rational CFT for the description of topological defects 
(see Remark \ref{rem:top-def-in-RCFT}). 
They show up, in particular, in our treatment of the 
compactified free boson in Section \ref{sec:compboson}. 

\begin{figure} 
 \centering
 \begin{align*}
 \begin{array}{cccc}
 	\text{(a)} 
	&\raisebox{-0.5\height}{\includegraphics[scale=.5]{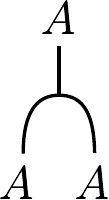}} 
 		:= \mu, \qquad 
	\raisebox{-0.5\height}{\includegraphics[scale=.5]{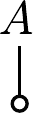}} 
 		\ := \eta, 
 	&\text{(b)} 
	&\raisebox{-0.5\height}{\includegraphics[scale=.5]{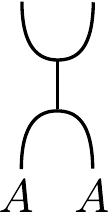}} 
	 	\ = \ \raisebox{-0.5\height}{\includegraphics[scale=.5]{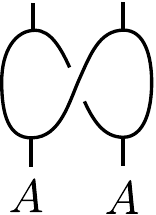}} \\ \\
	&\raisebox{-0.5\height}{\includegraphics[scale=.5]{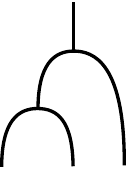}} 
	 	\ = \ \raisebox{-0.5\height}{\includegraphics[scale=.5]{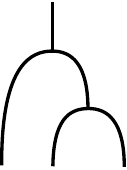}} \ ,
	\qquad\raisebox{-0.5\height}{\includegraphics[scale=.5]{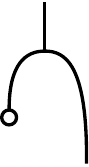}} 
	 	\ = \ \raisebox{-0.5\height}{\includegraphics[scale=.5]{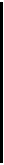}} 
	 	\ = \ \raisebox{-0.5\height}{\includegraphics[scale=.5]{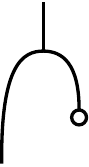}} 
	&&\raisebox{-0.5\height}{\includegraphics[scale=.5]{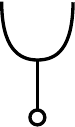}} 
	 	\ = \ \raisebox{-0.5\height}{\includegraphics[scale=.5]{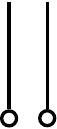}} \ , 
	 \quad \raisebox{-0.5\height}{\includegraphics[scale=.5]{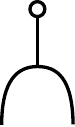}}
	 	\ = \ \raisebox{-0.5\height}{\includegraphics[scale=.5]{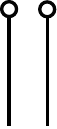}} \\ \\
	&&&\raisebox{-0.5\height}{\includegraphics[scale=.5]{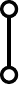}} \ = \ \id_{\one} \\
	\text{(c)} 
	&\raisebox{-0.5\height}{\includegraphics[scale=.5]{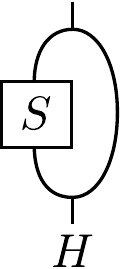}} 
	 	\ = \ \raisebox{-0.5\height}{\includegraphics[scale=.5]{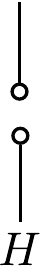}}
	 	\ = \ \raisebox{-0.5\height}{\includegraphics[scale=.5]{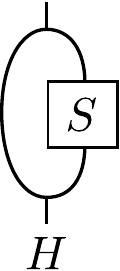}}
 \end{array} 
 \end{align*} 
 \caption{(a) Graphical notation and axioms for an algebra.
 	(b) Compatibility conditions between algebra and coalgebra structures in a bialgebra. (c) Condition on the antipode in a Hopf algebra.} 
	\label{fig:Alg}
\end{figure}

Let $(\Cat,\ot,\one,\alpha,\lambda,\rho)$ be a monoidal category. 
An \emph{algebra in $\Cat$} is a triple $(A,\mu,\eta)$ consisting of 
		an object $A\in\Cat$ and morphisms $\mu: A \ot A \to A$ (multiplication), 
		$\eta: \one \to A$ (unit) which are associative and unital (see Figure \ref{fig:Alg}\,(a)).
The {\em opposite algebra} $A^{op}=(A,\mu^{op},\eta)$ has multiplication $\mu^{op} := \mu \circ c_{A,A}$. 
A \emph{coalgebra in $\Cat$} is a triple $(A,\Delta,\varepsilon)$ consisting of 
		an object $A\in\Cat$ and morphisms $\Delta: A \to A \ot A$ (comultiplication), 
		$\varepsilon: A \to \one$ (counit) which are coassociative and counital.
For an algebra $(A,\mu,\eta)$ in $\Cat$, an \emph{$A$-(left) module} is 
		a pair $(M,m)$ where $M$ is an object in $\Cat$ and $m: A \ot M \to M$ is a morphism compatible with the unit and multiplication of $A$.
		Similarly, one defines an $A$-right module and $A|B$-bimodules (for the latter, left and right action have to commute).
We denote the category of 
	$A$-modules by $A\Mod_\Cat$ and that of $A|A$-bimodules  by 
   $A\BMod[\Cat] B$.

If $\Cat$ is in addition braided, one defines a \emph{bialgebra in $\Cat$} as a tuple $(A,\mu,\eta,\Delta,\varepsilon)$ such 
	that $(A,\mu,\eta)$ is an algebra and $(A,\Delta,\varepsilon)$ a coalgebra 
	in $\Cat$ and such that $\Delta,\varepsilon$ are morphisms of algebras 
	in the sense that the conditions in Figure \ref{fig:Alg}\,(b) hold.

\begin{df}
	A \emph{Hopf algebra} in a braided monoidal category $\Cat$ is a tuple $(H,\mu,\eta,\Delta,\varepsilon,S)$, where 
	$(H,\mu,\eta,\Delta,\varepsilon)$ is a bialgebra in $\Cat$ and $S$ is a morphism 
	$H \to H$ satisfying the conditions in Figure \ref{fig:Alg}\,(c).
\end{df}

Given an algebra $A$ in $\DZ(\Cat)$, by a slight abuse of notation we will denote the category of $U(A)$-modules in $\Cat$ by $A\Mod_\Cat$, instead of $U(A)\Mod_\Cat$. 
 If $(A,\mu,\eta,\Delta,\varepsilon)$ is a bialgebra in $\DZ(\CatD)$, 
 then we equip $A\Mod_\CatD$ with 
 a tensor product: For $A$-modules $(M_1,m_1)$, $(M_2,m_2)$, define${}^{\ref{fn:noassoc}}$ 
\begin{align}
 &(M_1,m_1) \ot (M_2,m_2) \\
 	&\qquad := (M_1 \ot M_2, (m_1 \ot m_2) \circ (\id_A \ot \varphi_{A,M_1} \ot \id_{M_2}) \circ (\Delta \ot \id_{M_1 \ot M_2})).\nonumber
\end{align}
 This turns $A\Mod_\CatD$ into a monoidal category.

\medskip

Let $\Cat$ be an abelian monoidal category with right-exact 
tensor product. Then the category $A\BMod[\Cat] A$ is abelian
(this can for example be seen using Proposition \ref{prop:D_F-abelian-etc},  cf.\ the proof of 
Proposition \ref{prop:fcCardyToAlg}  in Appendix \ref{apx:fcCardyToAlg})
and (non-strict) monoidal category with right-exact
tensor product $\ot_A$.

\subsection{Commutation condition \eqref{eq:comm-rel-defect-ops} implies 
conservation condition \eqref{eq:commutator-Hpert-O}} \label{app:comm-der}

In this appendix we use the notation set up in Section \ref{sec:PertDef2D_FT}, 
and in particular $\psi^\pm, \Phi$ as introduced in 
Section \ref{ssec:PertDefNonlocalConsCharges}. 

\medskip

Since $H_0$ is the generator of translations along the $y$-axis, commuting $H_0$ past a defect circle gives a derivative with respect to the position of the defect circle,\footnote{\label{fn:apx-ccond-comm} 
The commutators involving $H_0$ here and below might be understood as 
$\langle H_0v', \Op(\ldots) v \rangle - \langle \Op(\ldots)^* v', H_0 v \rangle = ...$ 
for all $v,v' \in \F$ ($H_0$ is unbounded, in general; recall that we supposed 
that $\Op(\ldots)$ is bounded).}
\begin{align} \label{eq:comm-rel-aux1}
  & \big[\,H_0 \, , \, \mathcal{O}\big(C_{\eps-\nu,\nu}^{X,\dots,X}(x_1,\dots,x_n)\big)(\psi^{\rho_1},\dots,\psi^{\rho_n})\,\big]
  \nonumber\\
  &\qquad = \frac{\partial}{\partial \nu} \mathcal{O}\big(C_{\eps-\nu,\nu}^{X,\dots,X}(x_1,\dots,x_n)\big)(\psi^{\rho_1},\dots,\psi^{\rho_n}) \ ,
\end{align}
where $\rho_i \in \{ \pm 1\}$.
Since $\overline L_{-1}\psi^+=0$, if $\psi^+$ is inserted at a point $x + iy$, then the operator $\mathcal{O}(\cdots)$ for that cylinder will be annihilated by\footnote{
  The partial derivatives with respect to $y$ are orthogonal to the defect line and require a local deformation of the defect line. This is allowed as the defect line is topological.}
$\bar\partial = \frac 12\left(\partial_x + i \partial_y\right)$. If $\psi^-$ is inserted at $x + iy$, the operator $\mathcal{O}(\cdots)$ will be annihilated by $\partial = \frac 12\left(\partial_x - i \partial_y\right)$. We can therefore trade the derivative with respect to $\nu$ in \eqref{eq:comm-rel-aux1} for a derivative $i \frac{\partial}{\partial x}$ for each $\psi^+$, and for a derivative $-i \frac{\partial}{\partial x}$ for each $\psi^-$. Altogether,
\begin{align} \label{eq:comm-rel-aux2}
&  \big[\,H_0 \, , \, \mathcal{O}\big(C_{\eps_1,\eps_2}^{X,\dots,X}(x_1,\dots,x_n)\big)(\psi^{\rho_1},\dots,\psi^{\rho_n}) \, \big]
\nonumber\\
&
\qquad = \Big(i^{\rho_1}\,\tfrac{\partial}{\partial x_1} + \cdots + i^{\rho_n}\, \tfrac{\partial}{\partial x_n}\Big)
   \mathcal{O}(C_{\eps_1,\eps_2}^{X,\dots,X}(x_1,\dots,x_n))(\psi^{\rho_1},\dots,\psi^{\rho_n}) \ .
\end{align}

Next we look at a regulated version of the multiple integrals. Define, for some $\delta>0$,
\begin{equation} 
	U(n,\delta) := \{ (x_1,\ldots,x_n) \in (0,L-\delta)^n 
		\,|\, x_j + \delta < x_{j+1} \text{ for } j=1,2,\ldots, n-1 \} \ .
\end{equation}
Now we define the regulated multiple integrals in analogy with \eqref{eq:e-def-integrals},
\begin{equation} \label{eq:e-def-integrals-aux1}
e_{n,\delta}^{\rho_1,\dots,\rho_n}
= \int_{U(n,\delta)} 
    f_{n}^{\rho_1,\dots,\rho_n}(x_1,\dots,x_n) \, d^n\mb x \ ,
\end{equation}
where $f_{n}^{\rho_1,\dots,\rho_n}(x_1,\dots,x_n)$ is as in \eqref{eq:f-def-integrands}.
Of course, by the assumption that the function $(x_1,\dots,x_n) \mapsto \langle v', f_{n}^{\rho_1,\dots,\rho_n}(x_1,\dots,x_n) v \rangle$ be integrable for all $v,v' \in \F$, these integrals are well-defined even if we set $\delta$ to zero. However, the right-hand-side of \eqref{eq:comm-rel-aux2} is not integrable, in general. 
After this preparation, we can conclude from \eqref{eq:comm-rel-aux2} that
\begin{equation} \label{eq:H0-comm-aux1}
[H_0,e_{n,\delta}^{\rho_1,\dots,\rho_n}]
= \sum_{k=1}^n i^{\rho_k} \int_{U(n,\delta)} \frac{\partial
f_{n}^{\rho_1,\dots,\rho_n}(x_1,\dots,x_n)}{\partial x_k} \, d^n\mb x \ .
\end{equation}
We note for later use that
\begin{equation} \label{eq:H0-comm-aux1b}
[H_0,e_{0,\delta}] = 0
\qquad \text{and} \qquad
\lim_{\delta\to0} [H_0,e_{1,\delta}^{\rho_1}] = 0 \ ,
\end{equation}
where the equality for $n=0$ follows 
as there are no insertions on the topological defect $X$, and the equality for $n=1$ is immediate from \eqref{eq:H0-comm-aux1} and periodicity of $f_1^{\rho_1}$. 
In the following we will assume that
\begin{equation}
  n \ge 2 \ .
\end{equation}
For $1 \le k \le n$ let 
\begin{align} 
	U(n,k,\delta) &:= \{ (x_1,\ldots,\widehat{x_k},\ldots,x_n) \in (0,L-\delta)^{n-1} 
		\,|\, x_j + \delta < x_{j+1} \\
	&\qquad \text{ for } j=1,\ldots,\widehat{k-1,k},\ldots, n-1; \, 
		x_{k-1} + 2\delta < x_{k+1} \} \ , \nonumber
\end{align}
where we set $x_0 := -\delta$, $x_{n+1} := L$ 
and where $\widehat{\cdots}$ means omission of the respective element(s). 
Together with \eqref{eq:H0-comm-aux1} we obtain 
\begin{align} \label{eq:H0-comm-aux2}
[H_0,e_{n,\delta}^{\rho_1,\dots,\rho_n}] &= \sum_{k=1}^{n} i^{\rho_k} \int_{U(n,k,\delta)} 
f_{n}^{\rho_1,\dots,\rho_n}(\dots,x_{k-1},x_{k+1}-\delta,x_{k+1},\dots) \, d^{n-1}\mb x
\nonumber\\
&-\sum_{k=1}^{n} i^{\rho_k} \int_{U(n,k,\delta)} 
f_{n}^{\rho_1,\dots,\rho_n}(\dots,x_{k-1},x_{k-1}+\delta,x_{k+1},\dots) \, d^{n-1}\mb x \ .
\end{align}
The omissions ``$\cdots$'' stand for the arguments of $f$ which remained unmodified with respect to \eqref{eq:H0-comm-aux1}. The arguments $x_{k-1}$ and $x_{k+1}$ are absent for $k=1$ and $k=n$, respectively. Now, using ($k=1,\ldots,n-1$) 
\begin{align}
	&\int_{U(n,k+1,\delta)} f_{n}^{\rho_1,\dots,\rho_n}
		(\dots,x_{k-1},x_{k},x_{k}+\delta,x_{k+2},\dots) \, d^{n-1}\mb x \\
	&\quad= \int_{U(n,k,\delta)} f_{n}^{\rho_1,\dots,\rho_n}
		(\dots,x_{k-1},x_{k+1}-\delta,x_{k+1},x_{k+2},\dots) \, d^{n-1}\mb x \ , \nonumber
\end{align}
shift the summation variable $k \leadsto k{+}1$ in the second sum of \eqref{eq:H0-comm-aux2} and combine the last term in the first sum and the first term in the second sum. This yields
\begin{align} \label{eq:H0-comm-aux3}
&[H_0,e_{n,\delta}^{\rho_1,\dots,\rho_n}]
 \nonumber\\
 &= i^{\rho_n} \int_{U(n,n,\delta)} 
	f_{n}^{\rho_1,\dots,\rho_n}(\dots,x_{n-1},L-\delta) \, d^{n-1}\mb x
~-~ i^{\rho_1} \int_{U(n,1,\delta)} 
	f_{n}^{\rho_1,\dots,\rho_n}(0,x_2,\dots) \, d^{n-1}\mb x \nonumber\\
&\qquad+ \sum_{k=1}^{n-1} 
\int_{U(n,k,\delta)} \left( i^{\rho_k} - i^{\rho_{k+1}} \right) 
f_{n}^{\rho_1,\dots,\rho_n}(\dots,x_{k-1},x_{k+1}-\delta,x_{k+1},\dots) \, d^{n-1}\mb x. 
\end{align}

Observe that 
\begin{align}
\int_{U(n,n,\delta)} f_{n}^{\rho_2,\dots,\rho_n,\rho_1}(\dots,x_{n-1},L-\delta) \, d^{n-1}\mb x 
= \int_{U(n,1,\delta)} f_{n}^{\rho_1,\dots,\rho_n}(0,x_2,\dots) \, d^{n-1}\mb x
\end{align}
by invariance of the theory under translations by $\delta$ in $x$-direction. Hence, the sum over $\rho_1,\dots,\rho_n \in \{\pm1\}$ of the first two terms in \eqref{eq:H0-comm-aux3} vanishes. 
Let $\psi \equiv \psi^+ + \psi^-$, and abbreviate 
$f_{n}^{\rho_1,\dots,\rho_n}(x_1,\dots,x_{n})$ by $\mathcal{O}'(\psi^{\rho_1},\dots,\psi^{\rho_n})$.
Consider the sum over $\rho_1,\dots,\rho_n \in \{\pm1\}$ of the integrand in the sum over $k$ in \eqref{eq:H0-comm-aux3}. If $\rho_k = \rho_{k+1}$ the prefactor is zero, so that we are left with
\begin{align} \label{eq:H0-comm-aux4}
&\sum_{\rho_1,\dots,\rho_n \in \{\pm1\}}
\left( i^{\rho_k}  - i^{\rho_{k+1}} \right) f_{n}^{\rho_1,\dots,\rho_n}(\dots,x_{k-1},x_{k+1}-\delta,x_{k+1},\dots)
\nonumber\\
&
= 2i \Big(
\mathcal{O}'(\psi,\dots,\psi,\psi^+,\psi^-,\psi,\dots,\psi)
-
\mathcal{O}'(\psi,\dots,\psi,\psi^-,\psi^+,\psi,\dots,\psi) \Big) \ ,
\end{align}
where $\psi^\pm$ are inserted in the $k$'th and $(k{+}1)$'st argument. The corresponding insertion points are $x_{k+1}-\delta$ and $x_{k+1}$. These two points approach each other for $\delta\to0$. Note that the OPE of $\psi^+$ with $\psi^-$ is regular. 
For this reason we will assume
that \eqref{eq:H0-comm-aux4} is dominated, uniformly in $\delta$, by an integrable function. 
Hence, we may interchange integration and the $\delta\to0$ limit, so that we can apply \eqref{eq:comm-rel-defect-ops}. We can now write:
\begin{align} \label{eq:H0-comm-aux5}
&
[H_0,e_{\eps_1,\eps_2;n}^X(\psi)] = \lim_{\delta \to 0}
\sum_{\rho_1,\dots,\rho_n \in \{\pm1\}} [H_0,e_{n,\delta}^{\rho_1,\dots,\rho_n}]
\nonumber\\
& \quad
= 2i \lim_{\delta \to 0}
\sum_{k=1}^{n-1} 
\int_{U(n,k,\delta)}\Big(  \mathcal{O}'(\psi,\dots,\psi,\psi^+,\psi^-,\psi,\dots,\psi) \nonumber\\
&\quad \hspace{10em} 
- \mathcal{O}'(\psi,\dots,\psi,\psi^-,\psi^+,\psi,\dots,\psi) 
\Big) \, d^{n-1}\mb x
\nonumber\\
& \quad
\overset{\text{\eqref{eq:comm-rel-defect-ops}}}= 2i \lim_{\eps \to 0}
\sum_{k=1}^{n-1} \int_{U_k}
\Big( 
\mathcal{O}(C^{\one,X}_{\eps_1-\eps,\eps,\eps_2}(x_k;x_1,\dots,\widehat{x_k,x_{k+1}},\dots,x_n))(\Phi;\psi,\dots,\psi)
\nonumber\\
& \hspace{7em} 
\qquad -  \mathcal{O}(C^{X,\one}_{\eps_1,\eps,\eps_2-\eps}(x_1,\dots,\widehat{x_k,x_{k+1}},\dots,x_n;x_k))(\psi,\dots,\psi;\Phi) 
\Big) \, d^{n-1}\mb x
\nonumber\\
& \quad
= 2i \lim_{\eps \to 0}
\int_0^L \hspace{-.5em} dy 
\int_{U(n-2,0)}
\Big( \mathcal{O}(C^{\one,X}_{\eps_1-\eps,\eps,\eps_2}(y;x_1,\dots,x_{n-2}))(\Phi;\psi,\dots,\psi)
\nonumber\\
&\hspace{8em} 
     \qquad -  \mathcal{O}(C^{X,\one}_{\eps_1,\eps,\eps_2-\eps}(x_1,\dots,x_{n-2};y))(\psi,\dots,\psi;\Phi) 
\Big) \, d^{n-2}\mb x \ ,
\end{align}
where $\widehat{x_k,x_{k+1}}$ stands for omitting the $k$'th and $(k{+}1)$'st argument and 
\begin{align}
U_k := \{(x_1,\ldots,\widehat{x_{k+1}},\ldots,x_n)\in\R^n \,|\, 0<x_1<\cdots<\widehat{x_{k+1}}<\cdots<x_n<L\}.
\end{align} 
Under the assumption that the $\eps_1,\eps_2\to 0$ limit exists, this shows that 
\begin{equation} \label{eq:H0-comm-aux6}
[H_0, \lim_{\eps_1,\eps_2 \to 0} e_{\eps_1,\eps_2;n}^X(\psi)] 
= 2i [H_\mathrm{int}, \lim_{\eps_1,\eps_2 \to 0} e_{\eps_1,\eps_2;n-2}^X(\psi)] \ ,
\end{equation}
where $H_\mathrm{int}$ was defined in \eqref{eq:HintDef}. 
To complete the argument that condition \eqref{eq:comm-rel-defect-ops} implies the commutator \eqref{eq:commutator-Hpert-O}, we compute
\begin{align} \label{eq:H0-comm-aux7}
&
[H_\mathrm{pert}(\Psi; -2i\lambda^2), \mathcal{O}(X,\psi;\lambda)] 
\nonumber\\
&
= 
\sum_{n=0}^\infty \lambda^n [H_0, \lim_{\eps_1,\eps_2 \to 0} e_{\eps_1,\eps_2;n}^X(\psi)] 
- 2 i 
\sum_{n=0}^\infty \lambda^{n+2} [H_\mathrm{int}, \lim_{\eps_1,\eps_2 \to 0} e_{\eps_1,\eps_2;n}^X(\psi)] 
\\
&
\overset{(*)}= 2 i 
\sum_{n=2}^\infty \lambda^n [H_\mathrm{int}, \lim_{\eps_1,\eps_2 \to 0} e_{\eps_1,\eps_2;n-2}^X(\psi)] 
- 2 i 
\sum_{n=0}^\infty \lambda^{n+2} [H_\mathrm{int}, \lim_{\eps_1,\eps_2 \to 0} e_{\eps_1,\eps_2;n}^X(\psi)] 
= 0 \ , \nonumber
\end{align}
where (*) follows from \eqref{eq:H0-comm-aux1b} and \eqref{eq:H0-comm-aux6}.

\subsection{The algebras $\E(\h; X)$}
\label{apx:algE}

Let $\h$, $X$ be finite dimensional vector spaces. 
Define $\E_0 = \E_0(\h; X)$ 
as the algebra freely generated by elements 
of $\h$ as commuting variables and elements of $X$ as non-commuting variables 
(modulo linear relations), i.e., 
\begin{align}
	\E_0(\h; X) := \talg(\h \oplus X)/I,
\end{align}
where $\talg(\h \oplus X)$ denotes the tensor algebra of $\h \oplus X$ 
and $I$ is the ideal generated by $\{ h \ot h' - h' \ot h \mid\; h,h' \in \h \}$. 
Fix some norm $\|.\|$ on $\h \oplus X$. 
For $r>0$ define $\|.\|_r$ as the norm on $\E_0$ given by
\begin{align} \label{eq:df_norm_r}
	\|y\|_r := \inf\Big\{ \sum_{n \in \N} 
		r^n \sum_{i=1}^{m_n} \prod_{j=1}^n \| y_{n,i,j} \| ~\Big|~ 
			y = \sum_{n \in \N} \sum_{i=1}^{m_n} \bigotimes_{j=1}^n y_{n,i,j} ~ + I \Big\}
\end{align}
for $y \in \E_0$, and where $y_{n,i,j} \in \h \oplus X$ denotes the $j$'th tensor factor in a decomposition of the homogeneous component of $y$ in $(\h \oplus X)^{\ot n}$ into a sum of $m_n$ pure tensors.
The norm $\|y\|_r$ can be understood as a weighted sum of projective cross norms. 

The algebra $\E_0$ is graded by the number of factors which are elements of $X$, 
i.e., 
\begin{gather} 
	\E_0 \cong \bigoplus_{n \in \N} \E_0^{(n)}, \qquad \text{where} \qquad 
	\E_0^{(0)} := S(\h) = \talg(\h)/I, \nonumber\\
	\E_0^{(n+1)} := S(\h) \ot X \ot \E_0^{(n)}.
\end{gather}
Let $\E^{(n)}$ be the completion of $\E_0^{(n)}$ with respect to the family 
of norms $\|.\|_r$, $r>0$.
Note, for example, that $\E^{(1)}$ is bigger than $\E^{(0)} \ot X \ot \E^{(0)}$ since it includes convergent sums of the form $\sum_n e_n \ot x \ot f_n$ for fixed $x \in X$.

Clearly, the completion does not depend on the 
initial choice of norm $\|.\|$ on $\h \oplus X$: 
if $c,C>0$ are such that $c\|x\|' \leq \|x\| \leq C\|x\|'$ for all 
$x \in \h \oplus X$ (all norms on the finite dimensional space $\h \oplus X$ are equivalent), then $\|a\|'_{cr} \leq \|a\|_r \leq \|a\|'_{Cr}$ 
holds for all $a \in \E_0$. 
Define 
\begin{align} \label{eq:def-of-E}
	\E(\h; X) := \bigoplus_{n \in \N} \E^{(n)}.
\end{align}
Denote the algebra 
$\E(\C h_1 \oplus \cdots \oplus \C h_n; \C x_1 \oplus \cdots \oplus \C x_m)$
by $\E(h_1,\ldots,h_n; x_1,\ldots,x_m)$.

Note that the multiplication on $\E_0$ is continuous, since 
$\|a \ot b\|_r \leq \|a\|_r\|b\|_r$ for all $a,b \in \E_0$ 
(and since it respects the grading $\E_0 \cong \bigoplus_{n \in \N} \E_0^{(n)}$). Hence, 
it extends to a continuous multiplication on the grade-wise completion and turns 
$\E(\h; X)$ into a topological algebra. 

\begin{rem}
The above completion may seem artificial, but it is in fact quite natural: 
the algebra $\E(h_1,\ldots,h_m)$ is isomorphic to the algebra of entire complex analytic functions in $m$ 
variables with the topology of uniform convergence on compact sets. 
To demonstrate this, we first note  two elementary facts.
\begin{itemize}
\item An entire complex analytic function 
is the same as a series 
\begin{align}
f(z_1,\ldots,z_m) = \sum_{\alpha \in \N^m} f_\alpha \cdot z^\alpha \ ,
\end{align}
where $f_\alpha \in \C$ and $z^\alpha := z_1^{\alpha_1}\cdots z_m^{\alpha_m}$, 
such that for all $r > 0$ the series 
$\|f\|'_r := \sum_{\alpha \in \N^m} |f_\alpha| \cdot r^{|\alpha|}$ 
converges (here, $|\alpha| := \alpha_1 + \cdots + \alpha_m$). 
This is because 
if $\|f\|'_r = \infty$ for some $r>0$, then for each $N \in \N$ 
there is $n>N$ and $\alpha \in \N^n$ such that $|f_\alpha| r^n \geq 2^{-n}$, 
i.e., $|f_\alpha \cdot (2r)^n| \geq 1$. 
Hence, the series $f(2r,\ldots,2r)$ does not converge in $\C$, a contradiction. 
\item 
The topology of uniform 
convergence on compact sets is induced by the family of 
norms $\|.\|'_r$, $r>0$: From $\lim_n \|f_n-f\|'_r = 0$ 
it follows that $(f_n)$ converges uniformly to $f$ on the compact set 
$\{z \in \C^m \,|\, |z_i| \leq r\}$. On the other hand, 
if a sequence $(f_n)$ converges uniformly to $f$ on 
the set $\{z \in \C^m \,|\, |z_i| \leq 2r\}$, 
then for every $\varepsilon > 0$ there is $N \in \N$ such that 
$\|f_n-f\|_\infty < \varepsilon$ for all $n > N$. 
With Cauchy's integral formula for the Taylor series 
coefficients $(f_n-f)_\alpha$ of $f_n - f$, i.e., 
$(f_n-f)_\alpha =\frac 1{(2\pi i)^m} 
	\int_{|z_i|=2r} \frac{f_n(z)-f(z)}{z^{\alpha + (1,\ldots,1)}}\,d\mb z$, 
one sees that $|(f_n-f)_\alpha| \leq \frac\varepsilon{(2r)^{|\alpha|}}$ 
for all $n>N$. 
This implies $\lim_n \|f_n-f\|'_r = 0$. 
\end{itemize}
Then denote by $\Phi$ the algebra map from $\E_0(h_1,\ldots,h_m)$ 
to the algebra of entire complex analytic functions in $m$ variables 
determined by 
\begin{align}
	\Phi(h_{i_1} \ot \cdots \ot h_{i_k} + I) = z_{i_1}\cdots z_{i_k} \ .
\end{align}
If we choose the norm $\|.\|$ on $\h$ such that $\|h\| := \|\Phi(h)\|_1'$ 
for all $h \in \h$, then by \eqref{eq:df_norm_r}
we have $\|y\|_r \leq \|\Phi(y)\|'_r$ 
for all $y \in \E_0$. 
On the other hand, $\|\Phi(y)\|'_r \leq \|y\|_r$, since 
if $y = \sum_{n \in \N} \sum_{i=1}^{m_n} \bigotimes_{j=1}^n y_{n,i,j}$, then 
\begin{align}
	\|\Phi(y)\|'_r = \Big\| \sum_{n \in \N} \sum_{i=1}^{m_n} \prod_{j=1}^n \Phi(y_{n,i,j}) \Big\|'_r 
		\leq \sum_{n \in \N} \sum_{i=1}^{m_n} \prod_{j=1}^n \|\Phi(y_{n,i,j})\|'_r 
		= \sum_{n \in \N} \sum_{i=1}^{m_n} \prod_{j=1}^n \|y_{n,i,j}\|_r \ .
\end{align}
In the second step, submultiplicativity of $\|.\|'_r$ was used 
($\|fg\|'_r \leq \|f\|'_r\|g\|'_r$). 
Together with the fact that the algebra of entire 
complex analytic functions is complete with respect to the topology 
of uniform convergence on compact sets (by Morera's 
theorem, uniform limits of holomorphic functions are holomorphic), 
one finds that $\Phi$ extends to an isomorphism of topological algebras. 
\end{rem}

The above remark implies in particular that $e^{\hbar h}$ is an element of $\E(h; 0)$; this will be important in the definition of the quantum groups $\tilde U_\hbar(\lsl_2)$ and $\tilde U_\hbar(L\lsl_2)$.

\medskip

We need the following result only for $\h$ acting diagonalisably 
on a vector space $V$. Since it does not make much more effort, 
we prove it for $\h$ acting \emph{locally finitely}. 
By this we mean that $\dim \, \mathrm{span}\{h.v \, | \, h \in \h\} < \infty$ 
for each $v \in V$. 

\begin{prop} \label{prop:apx_algE}
Let $V$ be a representation of $\E_0(\h; X)$ in which $\h$ acts locally finitely. There exists a unique extension to a representation of $\E(\h; X)$ such that for all $\varphi \in V^*$ and $v \in V$, the function $\E(\h; X) \to \C$, $x \mapsto \varphi(x.v)$, is continuous.
\end{prop}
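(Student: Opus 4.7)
The plan is to define the extension by working locally vector-by-vector, using local finiteness to reduce to finite-dimensional situations where one has canonical Hausdorff topologies and automatic continuity.

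First, I would fix $v \in V$ and build an ascending chain of finite-dimensional subspaces adapted to the grading of $\E(\h;X)$. Set $W_0(v) := S(\h).v$, which is finite-dimensional by (the intended reading of) the local finiteness hypothesis on the $\h$-action; in the diagonalisable case this is immediate because $v$ is a finite sum of simultaneous eigenvectors of $\h$. Inductively, $W_{k+1}(v)$ is the $S(\h)$-submodule generated by $W_k(v) + X\cdot W_k(v)$. Since $X$ is finite-dimensional, $X\cdot W_k(v)$ is a finite-dimensional linear image of $X \otimes W_k(v)$, and the local finiteness hypothesis applied to each generator keeps $W_{k+1}(v)$ finite-dimensional. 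By construction $\h\cdot W_k(v)\subset W_k(v)$ and $X\cdot W_k(v)\subset W_{k+1}(v)$, so any element of $\E_0^{(n)}$ sends $v$ into $W_n(v)$.

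Next I would choose operator-norm bounds. Fix any norm on each $W_k(v)$ (they are finite-dimensional, so all norms agree up to equivalence) and choose a norm $\|\cdot\|$ on $\h\oplus X$ together with a radius $r_0>0$ large enough that, simultaneously for all $k\le n$, $\|\rho(h)|_{W_k(v)}\|_{\text{op}}\le r_0\|h\|$ for $h\in\h$ and $\|x|_{W_k(v)\to W_{k+1}(v)}\|_{\text{op}}\le r_0\|x\|$ for $x\in X$. This is possible because only finitely many finite-dimensional spaces are involved. For a pure tensor $y=y_1\otimes\cdots\otimes y_N$ in $\E_0^{(n)}$, submultiplicativity of operator norms together with these bounds gives
\begin{equation*}
\|y.v\|_{W_n(v)} \;\le\; r_0^{\,N}\prod_j \|y_j\|\cdot\|v\|_{W_0(v)},
\end{equation*}
and taking the infimum over representations of $y$ as in \eqref{eq:df_norm_r} yields $\|y.v\|_{W_n(v)}\le\|y\|_{r_0}\,\|v\|_{W_0(v)}$. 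Hence $\rho_v:\E_0^{(n)}\to W_n(v)$ is continuous.

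Since $W_n(v)$ is finite-dimensional, hence complete in its unique Hausdorff vector-space topology, $\rho_v$ extends uniquely by continuity to $\E^{(n)}\to W_n(v)$. Summing over $n$ gives the desired extension $\E(\h;X)\to V$, and one verifies that it is still an $\E(\h;X)$-module action: the module axioms hold on the dense subspace $\E_0(\h;X)\otimes V$ and extend by separate continuity in the $\E$-variable, using that multiplication in $\E(\h;X)$ is continuous and preserves the grading. Because the image of $\rho_v$ restricted to each grade lies in a finite-dimensional subspace, every $\varphi\in V^*$ gives a continuous functional on $\E^{(n)}$, and summing over the grading shows weak continuity of $x\mapsto \varphi(x.v)$. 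Uniqueness follows because $\E_0(\h;X)$ is dense in $\E(\h;X)$ and any weakly continuous extension is determined by matrix elements against $V^*$, which in turn are determined on $\E_0$.

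The main obstacle is the very first step: making sure the subspaces $W_k(v)$ are finite-dimensional. This rests entirely on what "locally finitely" actually means; the literal reading ("$\dim\operatorname{span}\{h.v\mid h\in\h\}<\infty$") is vacuous since $\h$ is itself finite-dimensional, so one must read it as local finiteness of the $S(\h)$-action, which is automatic in the diagonalisable case of primary interest (the only case actually needed in the body of the paper). Once the chain $W_k(v)$ is under control, the rest is a routine "bound operators, define by continuity on finite-dimensional targets" argument.
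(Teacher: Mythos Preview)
Your proposal is correct and follows essentially the same route as the paper's proof: fix $v$, use local finiteness to confine $\E_0^{(n)}.v$ to a finite-dimensional subspace, bound the operator norms of the generators on that subspace by choosing a sufficiently large radius $r$, deduce $\|y.v\|\le \|y\|_r\|v\|$, and extend by continuity into the complete finite-dimensional target. The paper works directly with the subspace $\rho(\E_0^{(n)}).v$ rather than building your explicit filtration $W_k(v)$, but the two are the same up to bookkeeping. Your remark that the literal definition of ``locally finitely'' is vacuous and must be read as local finiteness of the $S(\h)$-action is a genuine clarification; the paper's proof tacitly relies on exactly this reading when asserting that $\rho(\E_0^{(n)}).v$ is finite-dimensional.
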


\begin{proof}
Denote the action of $\E_0$ on $V$ by $\rho$.
We need to define an action $\tilde\rho$ of $\E(\h;X)$ on $V$. 
As $\E(\h;X)$ is the direct sum of $\E^{(n)}$'s, 
it suffices to define $\tilde\rho$ for 
elements of $\E^{(n)}$. 
 Fix $v \in V$. As $X$ is finite dimensional and $\h$ acts locally finitely, 
 the subspace $\rho(\E_0^{(n)}).v \subseteq V$ is finite dimensional. 
 Let $\|.\|_v$ be any norm on $\rho(\E_0^{(n)}).v$. 
 There is $r>0$ such that $\|\rho(h).w\|_v < r\|h\|\|w\|_v = \|h\|_r\|w\|_v$ 
 holds for all $h \in \h$ and $w \in \rho(\E_0^{(n)}).v$ and 
 $\|\rho(x).w\|_v < \|x\|_r\|w\|_v$ holds for all $x \in X$ 
 and $w \in \rho(\E_0^{(n-1)}).v$. 
 This is true because $\h \oplus X$ and $\rho(\E_0^{(n)}).v$ are finite dimensional. 
 With \eqref{eq:df_norm_r} it follows that $\|\rho(a).w\|_v < \|a\|_r\|w\|_v$ 
 for all $a \in \E_0^{(n)}$ and $w \in \rho(\E_0^{(n)}).v$. 
 Let $(y_i)_{i \in I}$ be a Cauchy net in $\E_0^{(n)}$ approximating $y$. 
 Then $\|\rho(y_i - y_j).v\|_v < \|y_i - y_j\|_r\|v\|_v$ for all $i,j \in I$ 
 and this means that $(\rho(y_i).v)_{i \in I}$ is a Cauchy net in $V$. 
 Define $\tilde\rho(y).v := \lim_{i \in I} \rho(y_i).v$.
 
 Since $\rho(\E_0^{(n)}).v$ is finite-dimensional it is closed, and hence also $\tilde\rho(y).v \in  \rho(\E_0^{(n)}).v$. Continuity of $y \mapsto \varphi(\tilde\rho(y).v)$ follows from the continuity of $y \mapsto \tilde\rho(y).v$ with respect to the $\|.\|_v$-norm and since $\varphi(-)$ is linear and hence continuous on the finite-dimensional space $\rho(\E_0^{(n)}).v$. Uniqueness is clear from $\tilde\rho(y).v = \lim_i \rho(y_i).v$ together with the fact that on $\rho(\E_0^{(n)}).v$, the forms $\varphi(-)$ are continuous and separate points.
\end{proof}

Define the tensor product $\bar\ot$ by 
\begin{align} 
	\E(\h; X) \bar\ot \E(\h'; X') := \E(\h \oplus \h'; X \oplus X') / J, 
\end{align}
where $J$ is the closure of the ideal generated by all elements 
$a a' - a' a \in \E(\h \oplus \h'; X \oplus X')$ for $a \in \h \oplus X$ 
and $a' \in \h' \oplus X'$. 

\begin{rem}
 Any linear homogeneous (with respect to the grading \eqref{eq:def-of-E}) 
 map $f: \h \oplus X \to \E(\h'; X')$ extends uniquely 
 to a continuous algebra homomorphism 
 $\tilde f: \E(\h; X) \to \E(\h'; X')$ (which is 
 necessarily grade-preserving). 
 To see this, let $\tilde f|_{\E_0}$ be the unique extension of $f$ 
 to $\E_0$, 
 choose some norm $\|.\|$ on $\h \oplus X$, let 
 $r > 0$ be a positive real number and let 
 $C > 0$ be such that $\|f(x)\|_r \leq C \cdot \|x\|$ holds for all 
 $x \in \h \oplus X$. Then, by definition of $\|.\|_r$ and $\|.\|_C$, 
 $\|\tilde f|_{\E_0}(x)\|_r \leq \|x\|_C$ holds true 
 for any $x \in \E_0(\h; X)$. Hence, $\tilde f|_{\E_0}$ is continuous. 
 	The claim follows as $\tilde f|_{\E_0}$ is homogeneous and 
 	the homogeneous components of $\E(\h'; X')$ are complete. 
 It follows that the coproducts in Definition \ref{df:qgrps_uncomp} 
 are well-defined continuous homomorphisms $\Delta: \E \to \E \bar\ot \E$. 
\end{rem}

In order to avoid the effort of introducing 
$\E(\h,X)$ for the definition of 
$\tilde U_\hbar$ (see Definition \ref{df:qgrps_uncomp}), it would be tempting to work with just $U_q$ (see Definition \ref{df:qgrps_comp}), but to be able to 
 consider representations of $U_q$ as 
 representations of $\h$ (e.g. Theorem \ref{thm:exrepcatfuncmon_uncpt1}), 
 we would need to fix ``logarithms'' for 
 the action of the generator ``$k$'' related to the braiding. As long as one is only 
 concerned with finite dimensional representations, the only eigenvalues of 
 $k$ are of the form $q^n$ for some $n \in \Z$. So if $q$ is not a root of unity, 
 there is a canonical ``logarithm'' for $k$. But the representations underlying 
 the ``$Q$ operators'' are infinite dimensional. Also, we do not a priori want to exclude 
 the root of unity case. 
 Hence, we prefer choosing a ``logarithm'' for $k$ by introducing 
 a generator $h$ and letting $k = e^{\hbar \cdot h}$ (as done in \cite{BLZ99}). But the exponential 
 series $e^{\hbar \cdot h}$ is not an element of the \emph{algebra} generated by $h$. 
 Hence we need to complete with respect to a suitable topology. 

Comparing to other approaches, for example 
in \cite[Ex.\,6.2]{Drin86}, Drinfeld defines $U_\hbar$ with primitive Cartan-like generators $h$, 
 but with $\hbar$ being a formal parameter instead of a complex number;
in \cite[Ch.\,6.2]{EFK98} one finds a version of $U_q$ with $q$ being a number and with generators ``$q^h$'', $h \in \h$, but without any specified topology (otherwise $h$ should be recovered as a limit). We are not aware of a version of Drinfeld's quantum group with numerical parameter in the literature. 

\subsection{Proof of Theorem \ref{thm:UqCommGrothUnc}}
\label{apx:B}

Let $U_q(L\lsl_2)$ be as in Definition \ref{df:qgrps_comp}. We will reduce the statement to commutativity of $\Repfd(U_q(L\lsl_2))$, which is known from \cite[Corollary 2]{FR98}. We will abbreviate $U_q := U_q(L\lsl_2)$ and $\tilde U_\hbar := \tilde U_\hbar(L\lsl_2)$.
 
 \medskip
 
 The Hopf algebra homomorphism $U_q \to \tilde U_\hbar$ 
 determined by 
\begin{gather}
 k_j \mapsto e^{\hbar \cdot h_j}, \qquad e_j^\pm \mapsto e_j^\pm, \qquad (j=0,1)
\end{gather}
 gives rise to an exact monoidal functor 
 $\fcG: \, \Repfd(\tilde U_\hbar) \to \Repfd(U_q)$, 
 which descends to a ring homomorphism 
 $g: \Gr(\Repfd(\tilde U_\hbar)) \to \Gr(\Repfd(U_q))$. 

Note that the only eigenvalues of $k_0$ in a finite dimensional module are of the form $q^n$, $n \in \Z$.
The functor $\fcG$ has a monoidal ``section'' $\mathcal S$ defined on the tensor subcategory of $U_q$-modules 
 where $k_0$ acts diagonalisably: it sends $X \in \Repfd(U_q)$ with 
 diagonalisable $k_0$-action to 
 $\mathcal S(X) \in \Repfd(\tilde U_\hbar)$, which is the same vector space 
 as $X$ and where the generators $e_j^\pm$, $j=0,1$, of $\tilde U_\hbar$ act as the corresponding 
 generators of $U_q$ on $X$, but $h_0$ acts on 
 $k_0$-eigenvectors $v \in X$ for eigenvalue 
 $q^n$ by $h_0.v = n\hbar v$. 
 Since finite dimensional representations have finite length, 
 $\Gr(\Repfd(U_q))$ as well as $\Gr(\Repfd(\tilde U_\hbar))$ 
 are freely generated as abelian groups by the 
 classes $[X]$ of simple objects $X$.
 Hence, the functor $\mathcal S$ descends to a ring homomorphism 
 $s: \Gr(\Repfd(U_q)) \to \Gr(\Repfd(\tilde U_\hbar))$ 
 satisfying $g \circ s = \id$. 
For the same reason, it suffices to show that the classes $[X],[Y]$ of any two simple objects 
 $X,Y \in \Repfd(\tilde U_\hbar)$ commute in $\Gr(\Repfd(\tilde U_\hbar))$.
 For $m \in \Z$ let $X_m$ be the simple $\tilde U_\hbar$-module which is $\C$ 
 as a vector space, where $e_j^\pm$ ($j=0,1$) act by $0$ and $h_0$ acts by multiplication 
 with $2\pi im/\hbar$. 
 Then $g([X_m]) = 1$ for all $m \in \Z$, and we have 
 $X_m \ot X \cong X \ot X_m$, and hence $[X_m][X] = [X][X_m]$, for all $X \in \Repfd(\tilde U_\hbar)$. 
 Note that if $X \in \Repfd(\tilde U_\hbar)$ is simple, then there 
 is a unique $m \in \Z$ such that $[X] = [X_m] \cdot s(g([X]))$. 
 So let $X,Y \in \Repfd(\tilde U_\hbar)$ be simple and let $m,n \in \Z$ be 
 such that $[X] = [X_m] \cdot s(g([X]))$ and $[Y] = [X_n] \cdot s(g([Y]))$. 
 Then 
 \begin{align}
 	[X][Y] &= [X_m][X_n]  s(g([X]))s(g([Y])) = [X_m][X_n]  s(g([X]) g([Y])) \\
 		&\overset{(*)}= [X_m][X_n]  s(g([Y])g([X])) = [Y][X], \nonumber
 \end{align}
 where in (*) the commutativity of $\Repfd(U_q)$ was used.

\subsection{Proof of Proposition \ref{prop:fcCardyToAlg}}
\label{apx:fcCardyToAlg}

{\em Part 1.}
	Denote the functor \eqref{eq:C-to-AAbim} by $\fcG$. 
	Since $F^-_\Cat$ and $A$ are mutually transparent, we 
	have $U\alpha^+(F^-_\Cat) = U\alpha^-(F^-_\Cat)$ in 
	$\CatD = A\BMod[\Cat]A$. Hence, $U\alpha^+(m)$ is indeed a morphism 
	$UF \ot_A U\alpha^+(X) \to U\alpha^+(X)$ in $\CatD$ if 
	$m \in \Hom_\Cat((F^+_\Cat \oplus F^-_\Cat) \ot X, X)$. 
	This shows that $\fcG$ is a functor 
	$\Cat_{F_{\DZ\Cat}} \to \CatD_F$. 
	Let $(X,m)$ and $(X',m')$ be 
	objects of $\Cat_{F_{\DZ\Cat}}$ and write $m'_\pm := m' \circ (\iota_{F^\pm} \ot \id_{X'})$. Then
	\begin{align}
	&U\alpha^+(T(m,m')) \stackrel{\eqref{eq:Tpmorph}}= U\alpha^+\left(m \ot \id_{X'} 
	+ (\id_X \ot (m'_+ \oplus m'_-)) \circ 
		((c_{F^+_\Cat,X} \oplus c_{X,F^-_\Cat}^{-1}) \ot \id_{X'})\right) \nonumber\\
	&\quad= 
		U\alpha^+(m \ot \id_{X'}) + U\alpha^+(\id_X \ot (m'_+ \oplus m'_-)) \circ 
		U\alpha^+((c_{F^+_\Cat,X} \oplus c_{X,F^-_\Cat}^{-1}) \ot \id_{X'}) \nonumber\\
	&\quad\overset{(*)}=
		U\alpha^+(m \ot \id_{X'}) + U\alpha^+(\id_X \ot (m'_+ \oplus m'_-)) \circ 
		((\varphi_{F^+,\fcG(X)} \oplus \varphi_{F^-,\fcG(X)}) \ot \id_{\fcG(X')}) \nonumber\\
	&\quad= T(U\alpha^+(m),U\alpha^+(m')) \ .
	\end{align}
For step (*) first note that $\alpha^+(c_{F^+_\Cat,X}) = \alpha \circ \iota^+(c_{F^+_\Cat,X}) = \alpha(c_{F^+_\Cat,X})$ as morphisms $A \ot F^+_\Cat \ot X \to A \ot X \ot F^+_\Cat$ in $\Cat$. By Proposition \ref{prop:alphaind}, $\alpha$ is braided, so that this agrees with the half-braiding $\varphi_{F^+,X}$ of $F^+$. The argument for $\varphi_{F^-,X}$ is analogous, and uses once more that $U\alpha^+(F^-_\Cat) = U\alpha^-(F^-_\Cat)$.

The above equality shows that $\fcG$ is monoidal. 

\medskip\noindent
{\em Part 2.}
That $\fcG$ preserves the commutation condition can be verified along the same lines as monoidality, we omit the details.
	
	\medskip
	
	In the proof of Part 3, we will several times make use of the following 
	\begin{obs}\label{obs:exact-in-BMod-iff-in-Cat}
	A sequence $B \to C \to D$ in $A\BMod[\Cat]A$ is exact if and only if 
	$B \to C \to D$ is exact in $\Cat$, where the functor forgetting 
	about the actions of $A$ is implicit.
	\end{obs}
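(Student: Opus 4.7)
The approach is to show that the forgetful functor $U : A\BMod[\Cat]A \to \Cat$ is exact and conservative; combined, these two properties will imply that $U$ both preserves and reflects exact sequences. Conservativity is immediate: if $U(f)$ is invertible in $\Cat$, then $U(f)^{-1}$ automatically commutes with both the left and right $A$-actions (intertwining on both sides follows by postcomposing with $f$ and using that $f$ is a bimodule map), so it lifts to an inverse of $f$ in $A\BMod[\Cat]A$.

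For exactness, I plan to verify that $U$ preserves kernels and cokernels. For a bimodule map $f : X \to Y$, let $i : K \to X$ be the kernel of $U(f)$ in $\Cat$. Since $f \circ m_l^X \circ (\id \ot i) = m_l^Y \circ (\id \ot (f \circ i)) = 0$, the universal property of the kernel yields a unique map $\tilde m_l : A \ot K \to K$ with $i \circ \tilde m_l = m_l^X \circ (\id \ot i)$; the module axioms are inherited, and an analogous construction gives the right action. The resulting bimodule is easily seen to be the kernel in $A\BMod[\Cat]A$. For the cokernel $p : Y \to C$ of $U(f)$ in $\Cat$, one wishes to factor $p \circ m_l^Y : A \ot Y \to C$ through $A \ot C$. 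The obstruction $p \circ m_l^Y \circ (\id \ot f) = p \circ f \circ m_l^X$ vanishes, so the factorisation exists provided $A \ot X \to A \ot Y \to A \ot C$ is still a cokernel sequence—this is exactly what the right-exactness of $\ot$ in $\Cat$ guarantees.

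This cokernel step is the only (mild) obstacle, and it is precisely where the hypothesis on $\Cat$ enters. Once kernels and cokernels are preserved, $U$ is exact, and images, computable as $\ker(\mathrm{coker}(-))$ in any abelian category, are preserved as well. Hence exactness of $B \xrightarrow{f} C \xrightarrow{g} D$ at $C$—the statement that the canonical morphism $\mathrm{im}(f) \to \ker(g)$ is an isomorphism—transfers back and forth between $A\BMod[\Cat]A$ and $\Cat$ via the conservative exact functor $U$, completing the proof.
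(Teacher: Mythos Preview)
Your proof is correct. The core content is the same as the paper's --- the forgetful functor creates kernels and cokernels (hence is exact) and is conservative, so exactness passes in both directions --- but the packaging differs. The paper does not verify this directly: instead it uses the braiding on $\Cat$ to convert a right $A$-action into a left $A^{op}$-action, thereby identifying $A\BMod[\Cat]A$ with a full abelian subcategory of $\Cat_{\iota^+(A\oplus A^{op})}$, and then invokes Lemma~\ref{lem:exact_iff} (which already records the analogous statement for $\Cat_F$). Your argument is more self-contained and, notably, does not require $\Cat$ to be braided; the paper's route is shorter in context because it recycles a lemma already on the shelf. One small point worth making explicit in your write-up: the cokernel step needs right-exactness of $(-)\ot A$ as well as of $A\ot(-)$, since the right action must also descend; this is covered by your hypothesis that $\ot$ is right-exact in both variables, but you only spell out the left case.
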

	This is a consequence of Lemma \ref{lem:exact_iff}, 
	regarding $A\BMod[\Cat] A$ as a full abelian (non-monoidal) 
	subcategory 
	of $\Cat_{\iota^+(A \oplus A^{op})}$ 
	(use the braiding on $\Cat$ to turn the right action of $A$ into a left action of $A^{op}$ 
	and observe that kernels and cokernels of a bimodule morphism are bimodules). 

\medskip\noindent
{\em Part 3.}
	Let $0 \to X \to Y \to Z \to 0$ be an exact sequence in $\Cat$. 
	Since by assumption $A \ot (-): \Cat \to \Cat$ is exact,  also 
	$0 \to A \ot X \to A \ot Y \to A \ot Z \to 0$ is an exact sequence in $\Cat$. 
	Observation \ref{obs:exact-in-BMod-iff-in-Cat} now implies that  
	$0 \to U\aind^+(X) \to U\aind^+(Y) \to U\aind^+(Z) \to 0$ 
	is exact in $A\BMod[\Cat] A$. 
	It remains to show that right-exactness of 
	$F^\pm_\Cat \ot (-) : \Cat \to \Cat$ 
	implies right-exactness of 
	$U F \ot_A (-) : A\BMod[\Cat] A \to A\BMod[\Cat] A$. 
	With another application of Lemma \ref{lem:exact_iff},
	this will imply that $0 \to \mathcal G(X) \to \mathcal G(Y) \to \mathcal G(Z) \to 0$ is 
	exact in $(A\BMod[\Cat]A)_F$. 
	
	So let $B \to C \to D \to 0$ be an exact sequence in 
	$A\BMod[\Cat]A$ and note that, by Observation \ref{obs:exact-in-BMod-iff-in-Cat}, 
	$B \to C \to D \to 0$ is exact in $\Cat$. 
	We have to show that 
	$UF \ot_A B \to UF \ot_A C \to UF \ot_A D \to 0$ is exact in $A\BMod[\Cat]A$. 
	As there are isomorphisms 
	$U\alpha^\pm(F^\pm_\Cat) \ot_A B \cong F^\pm_\Cat \ot^\pm B$, 
	$\ldots$ in $A\BMod[\Cat]A$ (cf.\ Remark \ref{rem:alpha-frs-relation}) 
	we find that it suffices to show exactness of the sequence 
	$(F^+_\Cat \ot^+ B) \oplus (F^-_\Cat \ot^- B) \to \cdots \to 0$ 
	in $A\BMod[\Cat]A$. 
	By Observation \ref{obs:exact-in-BMod-iff-in-Cat}, this is 
	equivalent to the exactness of 
	$(F^+_\Cat \oplus F^-_\Cat) \ot B \to \cdots \to 0$ 
	in $\Cat$, which follows from the exactness of 
	$B \to C \to D \to 0$ in $\Cat$ together with 
	right-exactness of $F^\pm_\Cat \ot (-) : \Cat \to \Cat$.

\subsection{Proof of Proposition \ref{prop:fcAB}}
\label{apx:fcAB}

Before giving the proof of Proposition \ref{prop:fcAB}, we will compute the 3-cocycle defining the tensor product on the semisimple subcategory of $A_r\BMod[\Cat]A_r$ spanned by the simple objects $X(\beta,\xi)$. For the proof we only need that this cocycle is trivial on the span of $X(\beta,1)$, but the general result is of interest on its own.

\medskip

A \emph{pointed category} is a $\C$-linear abelian 
semisimple monoidal category with exact tensor product such 
that the set of isomorphism classes of simple objects forms a group $G$. This means that for $g,g' \in G$ and $x \in g$, $y \in g'$, we have $x \ot y \in gg'$. 
Let $\Cat_G$ be the $\C$-linear abelian semisimple 
(with arbitrarily large direct sums) category with 
simple objects $x_g$, $g \in G$, and tensor product $x_g \ot x_{g'} = x_{gg'}$. 
This category is a strict version of the category of $G$-graded 
vector spaces. 
Given a 3-cocycle $\psi \in Z^3(G,\C^\times)$, one obtains a monoidal structure 
on $\Cat_G$ by defining associativity constraints 
$\alpha_{g,g',g''} := \psi(g,g',g'') \cdot \id_{gg'g''}$ 
(the pentagon axiom is equivalent to the 3-cocycle condition, see \cite[App.\,E]{MooreSeib89}
or \cite[Sect.\,3]{JS93}). 
Denote this category by $\Cat_G(\psi)$. Cohomologous 
3-cocycles yield isomorphic monoidal structures, so that 
$\Cat_G(\psi)$ depends only on the cohomology class of $\psi$ 
in $H^3(G,\C^\times)$. 
Any pointed category containing arbitrarily large direct sums is 
monoidally equivalent to some $\Cat_G(\psi)$. The class of the 
3-cocycle is uniquely determined by the requirement that the equivalence 
takes $x_g$ to an object in the isomorphism class $g$. 

By Remark \ref{rem:Ar-bimods}, 
the full subcategory $(A_r\BMod[\Cat]A_r)_\mathrm{ss}$ of 
$A_r\BMod[\Cat]A_r$ formed by arbitrarily large 
direct sums of simple objects is pointed, with group of 
simple objects 
$G := \C^\times \times \C^\times \ (\cong (\C / r\Z) \times \C^\times)$. 
	For an element $g = (\eta,\xi) \in G$ we pick 
	$X_g := X(\mathcal{L}(\eta),\xi)$ as representative of the 
	corresponding isomorphism class. 
Recall from Remark \ref{rem:Ar-bimods} that 
\begin{align}
	X_g = X(\mathcal{L}(\eta),\xi) = \textstyle \big(\bigoplus_{n \in \Z} \C, \ 
		\bigoplus_{n \in \Z} \gamma_{\mathcal{L}(\eta) + nr} \, \id_{\C} \big)  
\end{align} 
as an object in $\Cat = \Repss(\algA)$, 
where $\gamma_{\mathcal{L}(\eta) + nr}$ is the algebra homomorphism 
$\algA \to \C$ sending $\fxh$ to $\mathcal{L}(\eta) + nr \in \C$. 
We denote by $\iota_{\mathcal{L}(\eta), n}$ the canonical embedding 
of $\C$ into the $n$'th summand of the underlying vector space 
$\bigoplus_{n \in \Z} \C$ of $X_g$. 
The tensor product on $(A_r\BMod[\Cat]A_r)_\mathrm{ss}$ 
is defined through choices of cokernels 
$(\cok_{X,Y}, X \otA Y)_{X,Y \in (A_r\BMod[\Cat]A_r)_\mathrm{ss}}$, 
see Appendix \ref{appx:alg-etc-in-braided-cats}. 
Different choices lead to canonically equivalent 
monoidal structures. Therefore we have the freedom 
to pick a convenient one. 
For simple objects $X_g$ and 
$X_{g'}$ with $g = (\eta,\xi)$ and $g' = (\eta',\xi')$ 
we let $\lambda_{g,g'}$ be the map 
\begin{align} 
\label{eq:3cocycle-aux1}
\lambda_{g,g'} : &\ X_g \ot X_{g'} \to X_{gg'} \ , \\ 
	&\iota_{\mathcal{L}(\eta), n}(a) \ot \iota_{\mathcal{L}(\eta'), m}(b) 
	\ \mapsto\ \iota_{\mathcal{L}(\eta \eta'), n+m+\sigma(\eta,\eta')}
		\left(\xi^{m}\cdot ab\right) \ , \quad a,b \in \C \ , \ 
		n,m \in \Z \ , \nonumber
\end{align}
where $\sigma$ is the 2-cocycle 
\begin{align} 
\sigma: &\ \C^\times \times \C^\times \to \Z \ , \\ 
	&(\eta,\eta') \mapsto 
	\frac{\mathcal{L}(\eta) + \mathcal{L}(\eta') - \mathcal{L}(\eta \eta')}r = 
	\begin{cases} 0 & \text{if }\mathrm{Re}\frac{\mathcal{L}(\eta) + \mathcal{L}(\eta')}r < 1 \ ,\\
		1 & \text{else} \ . \end{cases} \nonumber
\end{align}
The addition of $\sigma(\eta,\eta')$ on the right hand side of \eqref{eq:3cocycle-aux1} ensures that $\lambda_{g,g'}$ is a morphism in $\Cat$. The factor $\xi^m$ compensates the difference of the right action $(\xi')^k \cdot (\id_{X_g} \ot  s^k)$ of $1_{kr}$ on the left hand side and $(\xi \xi')^k \cdot s^k$ on the right hand side, see \eqref{eq:Ar-action-X(b,x)}, so that $\lambda_{g,g'}$ is 
a morphism of $A_r|A_r$-bimodules. The factor $\xi^m$ is also required in order for $(\lambda_{g,g'},X_{gg'})$ to be a cokernel for 
the morphism $l-r : X_g \ot A_r \ot X_{g'} \to X_g \ot X_{g'}$.

For $a,b,c \in \C$, $n_i \in \Z$, $g_i = (\eta_i,\xi_i) \in G$, and writing 
$X_i := X_{g_i}$, $i=1,2,3$, we have 
\begin{align} 
	\lambda_{g_1g_2,g_3} \circ (\lambda_{g_1,g_2} &\ot \id_{X_{3}}): 
	\iota_{\mathcal{L}(\eta_1), n_1r}(a) \ot \iota_{\mathcal{L}(\eta_2), n_2r}(b) \ot 
	\iota_{\mathcal{L}(\eta_3), n_3r}(c) \\
	&\mapsto\ \iota_{\mathcal{L}(\eta_1\eta_2\eta_3),\ n_1+n_2+n_3 
		+ \sigma(\eta_1,\eta_2) + \sigma(\eta_1\eta_2,\eta_3)} 
		\left(\xi_1^{n_2}(\xi_1\xi_2)^{n_3}\cdot abc\right) \nonumber
\end{align}
and 
\begin{align} 
	\lambda_{g_1,g_2g_3} \circ (\id_{X_{1}} &\ot \lambda_{g_2,g_3}):  
	\iota_{\mathcal{L}(\eta_1), n_1r}(a) \ot \iota_{\mathcal{L}(\eta_2), n_2r}(b) \ot 
	\iota_{\mathcal{L}(\eta_3), n_3r}(c) \\
	&\mapsto\ \iota_{\mathcal{L}(\eta_1\eta_2\eta_3),\ n_1+n_2+n_3 
		+ \sigma(\eta_2,\eta_3) + \sigma(\eta_1,\eta_2\eta_3)} 
		\left(\xi_1^{n_2+n_3+\sigma(\eta_2,\eta_3)}\xi_2^{n_3}\cdot abc\right) \ . \nonumber
\end{align}
Hence,
\begin{align} 
\lambda_{g_1,g_2g_3} \circ (\id_{X_{1}} \ot \lambda_{g_2,g_3}) \circ \alpha_{X_{1},X_{2},X_{3}}
	= \psi(g_1,g_2,g_3) \cdot \lambda_{g_1g_2,g_3} \circ (\lambda_{g_1,g_2} \ot \id_{X_{3}}) \ , 
\end{align}
where $\alpha_{X_{1},X_{2},X_{3}}$ are the 
associativity constraints in $\Cat$ and $\psi$ is the 3-cocycle
\begin{align} \label{eq:3-cocycle}
	\psi(g_1,g_2,g_3) = \xi_1^{\sigma(\eta_2,\eta_3)} \ . 
\end{align}
By the definition of associativity constraints in the 
category of bimodules, 
this shows that the functor sending $x_g \in \Cat_G(\psi)$ to 
$X_g$ gives rise to a monoidal equivalence of $\Cat_G(\psi)$ 
and $(A_r\BMod[\Cat]A_r)_\mathrm{ss}$. 
Let us just note 

\begin{prop}
The 3-cocycle $\psi$ from \eqref{eq:3-cocycle} is not a coboundary.
\end{prop}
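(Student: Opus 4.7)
The plan is to restrict $\psi$ to the cyclic order-$2$ subgroup generated by $c:=(-1,-1)\in G=\C^\times\times\C^\times$ and to show that $\psi|_{\langle c\rangle}$ already represents a nontrivial class in $H^3(\Z/2,\C^\times)$. The key numerical input is that, choosing $\ell(-1)=i\pi$, one has $\mathcal{L}(-1)=\mathrm{mod}_r(r/2)=r/2$ since $\mathrm{Re}(1/2)\in[0,1)$. Hence $\sigma(-1,-1)=(\mathcal{L}(-1)+\mathcal{L}(-1)-\mathcal{L}(1))/r=1$, so $\psi(c,c,c)=(-1)^1=-1$. Moreover, $\psi(g_1,g_2,g_3)=1$ whenever at least one $g_i$ equals the identity $e=(1,1)$: if $g_1=e$ then $\xi_1=1$; if $g_2=e$ or $g_3=e$ no reduction modulo $r$ is needed and $\sigma=0$. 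In particular $\psi(c,e,e)=\psi(e,e,c)=1$.

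Suppose now, for contradiction, that $\psi=d\chi$ for some 2-cochain $\chi:G\times G\to\C^\times$, with the convention $(d\chi)(g_1,g_2,g_3)=\chi(g_2,g_3)\,\chi(g_1,g_2g_3)\,\chi(g_1g_2,g_3)^{-1}\,\chi(g_1,g_2)^{-1}$. Using $c^2=e$, direct evaluation on the three triples $(c,e,e)$, $(e,e,c)$ and $(c,c,c)$ yields
\begin{align*}
(d\chi)(c,e,e)&=\chi(e,e)/\chi(c,e),\\
(d\chi)(e,e,c)&=\chi(e,c)/\chi(e,e),\\
(d\chi)(c,c,c)&=\chi(c,e)/\chi(e,c).
\end{align*}
Setting the first two equal to $1$ forces $\chi(c,e)=\chi(e,e)=\chi(e,c)$, whence $(d\chi)(c,c,c)=1$; but $\psi(c,c,c)=-1$, contradiction.

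There is no serious obstacle once the right element has been identified. The one conceptual point is that $\psi$ is identically $1$ on each coordinate subgroup $\{(\eta,1)\}$ and $\{(1,\xi)\}$ separately, so one must pass to a "diagonal" element where both the $\xi_1$-factor and the $\sigma(\eta_2,\eta_3)$-factor are simultaneously nontrivial, and $c=(-1,-1)$ is the minimal such choice; everything else is a two-line calculation using $c^2=e$ together with the normalisation built into $\psi$.
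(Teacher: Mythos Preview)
Your proof is correct and follows essentially the same route as the paper: both pick the order-$2$ element $c=(-1,-1)$, compute $\psi(c,c,c)=-1$ via $\sigma(-1,-1)=1$, and derive a contradiction by showing that any coboundary $d\chi$ satisfies $(d\chi)(c,c,c)=1$ from the normalisations $\chi(c,e)=\chi(e,e)=\chi(e,c)$. The only cosmetic difference is that the paper obtains these normalisations from the single general identity $\psi(g,e,g')=1$ rather than from the two special cases $\psi(c,e,e)=\psi(e,e,c)=1$ that you use.
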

\begin{proof}
Suppose, to the contrary, that 
\begin{align} 
\psi(g_1,g_2,g_3) = d\varphi(g_1,g_2,g_3) = \varphi(g_2,g_3)\varphi(g_1g_2,g_3)^{-1}\varphi(g_1,g_2g_3)\varphi(g_1,g_2)^{-1} 
\end{align} 
for some map $\varphi: G \times G \to \C^\times$. 
Then for all $g,g'\in G$ we have (with $e = (1,1) \in G$ the neutral element) 
\begin{align} 
1 = \psi(g,e,g') = \varphi(e,g')\varphi(g,g')^{-1}\varphi(g,g')\varphi(g,e)^{-1} \ , 
\end{align}
hence, $\varphi(g,e) = \varphi(e,g')$. 
Now fix $g := (-1,-1) \in G$. Then we have $\psi(g,g,g) = -1 \neq 1$, 
but, with $g^2 = e$,
\begin{align} 
d\varphi(g,g,g) = \varphi(g,g)\varphi(g^2,g)^{-1}\varphi(g,g^2)\varphi(g,g)^{-1} = 1 \ , 
\end{align}
a contradiction.
\end{proof}

We are now in the position to prove Proposition \ref{prop:fcAB}. 

\begin{proof}[Proof of Proposition \ref{prop:fcAB}]
 Faithfulness is clear, since $\fcAB$ acts as the identity on hom-sets. 
 Exactness comes from $\fcAB$ taking direct sums to direct sums. 
 As $\mathcal{L}$ in \eqref{eq:comp_Psi} gives 
 rise to an injective map $\C^\times \to \C / r\Z$, 
 the functor $\fcAB$ maps non-isomorphic simple object to non-isomorphic 
 simple objects. Hence, $\fcAB$ is full. 
 
 Let $H \subseteq G$ be the subgroup $H = \{(\xi,1) \in G\}$. 
 To see monoidality of $\fcAB$, the key point is that $\psi$ restricted to $H$ is trivial, $\psi|_{H^{\times 3}} = 1$.
  Indeed, an exact functor sending 
 $\C_z \in \Repss(\algB)$ 
 ($\C_z$ as in Remark \ref{rem:image-of-bimod-functor}) 
 to $x_{(z,1)}$ gives rise 
 to a monoidal equivalence of $\Repss(\algB)$ and the 
 subcategory $\Cat_H(\psi|_{H^{\times 3}}) = \Cat_H(1)$. 
 Now any choices of isomorphisms 
 $\fcAB(\C_z) \cong X_{(z,1)}$ 
 may be used to obtain the structure maps needed 
 to turn $\fcAB$ into a monoidal functor. 
\end{proof}

\newcommand{\etalchar}[1]{$^{#1}$}
\renewcommand{\refname}{References}
\providecommand{\bysame}{\leavevmode\hbox to3em{\hrulefill}\thinspace}
\providecommand{\MR}{\relax\ifhmode\unskip\space\fi MR }
\providecommand{\MRhref}[2]{%
  \href{http://www.ams.org/mathscinet-getitem?mr=#1}{#2}
}
\providecommand{\href}[2]{#2}

\end{document}